\theoremstyle{plain}
\newtheorem*{theorem*}{Theorem}
\newtheorem*{conjecture*}{Conjecture}
\newtheorem*{theoremA}{Theorem A}
\newtheorem*{theoremB}{Theorem B}
\newtheorem{thm}{Theorem}[section]
\newtheorem{theorem}[thm]{Theorem}
\newtheorem{corollary}[thm]{Corollary}
\newtheorem{lemma}[thm]{Lemma}
\newtheorem*{lemma*}{Lemma}
\newtheorem{conjecture}[thm]{Conjecture}
\newtheorem{proposition}[thm]{Proposition}
\theoremstyle{definition}
\newtheorem{definition}[thm]{Definition}
\theoremstyle{remark}
\newtheorem*{remark}{Remark}
\DeclareMathOperator{\Gal}{Gal}
\DeclareMathOperator{\rank}{rank}
\DeclareMathOperator{\corank}{corank}
\DeclareMathOperator{\Selm}{Sel}
\DeclareMathOperator{\img}{img}
\DeclareMathOperator{\coker}{coker}
\DeclareMathOperator{\Hom}{Hom}
\DeclareMathOperator{\res}{res}
\DeclareMathOperator{\red}{red}
\DeclareMathOperator{\cores}{cores}
\DeclareMathOperator{\infl}{inf}
\DeclareMathOperator{\dual}{dual}
\DeclareMathOperator{\Frob}{Frob}
\newcommand{\Q}{{\mathbb{Q}}}
\newcommand{\Z}{{\mathbb{Z}}}
\newcommand{\N}{{\mathbb{N}}}
\newcommand{\C}{{\mathbb{C}}}
\newcommand{\Fp}{{\mathbb{F}_p}}
\newcommand{\Zp}{{\mathbb{Z}_p}}
\newcommand{\Qp}{{\mathbb{Q}_p}}
\newcommand{\ilim}{\mathop{\varprojlim}\limits}
\newcommand{\dlim}{\mathop{\varinjlim}\limits}
\newcommand{\Sel}{{\Selm_p}}
\newcommand{\Selinf}{{\Selm_{p^{\infty}}}}
\newcommand{\cN}{\mathcal{N}}
\newcommand{\cG}{\mathcal{G}}
\newcommand{\cO}{\mathcal{O}}
\newcommand{\cy}[1]{\mathbb{Z}/#1\mathbb{Z}}
\newcommand{\fp}{\mathfrak{p}}
\newcommand{\overbar}[1]{\mkern 1.5mu\overline{\mkern-1.5mu#1\mkern-1.5mu}\mkern 1.5mu}
\newcommand{\dotcup}{\ensuremath{\mathaccent\cdot\cup}}
\newcommand{\joinrelshort}{\mathrel{\mkern-9mu}}
\newcommand{\shortlongrightarrow}{\relbar\joinrelshort\rightarrow}
\newcommand{\isomarrow}{\mathrel{\mathop{\setbox0\hbox{$\mathsurround0pt
        \shortlongrightarrow$}\ht0=0.7\ht0\box0}\limits
        ^{\sim\mkern2mu}}}
\begin{document}

\title{Selmer Groups and Anticyclotomic $\Zp$-extensions}

\author{Ahmed Matar}

\address{Department of Mathematics\\
         University of Bahrain\\
         P.O. Box 32038\\
         Sukhair, Bahrain}
\email{amatar@uob.edu.bh}

\begin{abstract}
Let $E/\Q$ be an elliptic curve, $p$ a prime and $K_{\infty}/K$ the anticyclotomic $\Zp$-extension of a quadratic imaginary field $K$ satisfying the Heegner hypothesis. In this paper we give a new proof to a theorem of Bertolini which determines the value of the $\Lambda$-corank of $\Selinf(E/K_{\infty})$ in the case where $E$ has ordinary reduction at $p$. In the case where $E$ has supersingular reduction at $p$ we make a conjecture about the structure of the module of Heegner points mod $p$. Assuming this conjecture we give a new proof to a theorem of Ciperiani which determines the value of the $\Lambda$-corank of $\Selinf(E/K_{\infty})$ in the case where $E$ has supersingular reduction at $p$.
\end{abstract}

\maketitle

\section{Introduction}

Let $E$ be an elliptic curve of conductor $N$ defined over $\Q$ and let $K$ be an imaginary quadratic field  with discriminant $d_K\neq -3,-4$ such that all the primes dividing $N$ split in $K/\Q$. We will denote the class number of $K$ by $h_K$. Now suppose $p \geq 5$ is a prime not dividing $Nd_Kh_K \varphi(Nd_K)$ (together with some additional restrictions listed in section 2.1).

Let $K_{\infty}/K$ be the anticylotomic $\Zp$-extension of $K$, $\Gamma=\Gal(K_{\infty}/K)$ and $K_n$ the unique subfield of $K_{\infty}$ containing $K$ such that $\Gal(K_n/K) \cong \cy{p^n}$

For any $n$ we let $\Selinf(E/K_n)$ denote the $p^{\infty}$-Selmer group of $E$ over $K_n$ defined by
$$\displaystyle 0 \longrightarrow \Selinf(E/K_n) \longrightarrow H^1(K_n, E[p^{\infty}])\longrightarrow \prod_v H^1(K_{n,v}, E)[p^{\infty}]$$
We also define the $p^{\infty}$-Selmer group of $E$ over $K_{\infty}$ as $\Selinf(E/K_{\infty})=\dlim\Selinf(E/K_n)$

Now let $T_p\Selinf(E/K_n)$ be the $p$-adic Tate module of $\Selinf(E/K_n)$. We will also be interested in the pro-$p$ Selmer group of $E$ over $K_{\infty}$ defined as $X_{p^{\infty}}(E/K_{\infty})=\ilim T_p\Selinf(E/K_n)$ where the inverse limit is taken over $n$ with respect to the corestriction maps

Finally, let $\Lambda=\Zp[[\Gamma]]$ be the Iwasawa algebra attached to $K_{\infty}/K$. Fixing a topological generator $\gamma \in \Gamma$ allows us to identify $\Lambda$ with the power series ring $\Zp[[T]]$. Throughout most of the paper we work ``mod $p$'' and so we will also consider the ``mod $p$'' Iwasawa algebra $\overbar{\Lambda}=\Lambda/p\Lambda=\Fp[[T]]$

Let $\pi: X_0(N) \to E$ be a modular parametrization of $E$ which maps the cusp $\infty$ of $X_0(N)$ to the origin of $E$ and let $E'$ be a strong Weil curve in the isogeny class of $E$ i.e. there exists a modular parametrization $\pi': X_0(N) \to E'$ which maps the cusp $\infty$ of $X_0(N)$ to the origin of $E'$ such that the induced map $\pi'_*: J_0(N) \to E'$ has a geometrically connected kernel.

Choosing an ideal $\cN$ of $\cO_K$ such that $\cO_K/\cN \cong \cy{N}$ allows us to define a family of Heegner points $\alpha_n \in E(K_n)$ using the modular parametrization $\pi$ and a family of Heegner points $\alpha'_n \in E'(K_n)$ using the modular parametrization $\pi'$ (see section 2). We will make the following conjecture

\begin{conjecture}\label{main_conjecture}
Assume that $p$ splits in $K/\Q$ and $E$ has good supersingular reduction at $p$ then the $\Gamma$-submodule of $E'(K_{\infty})/p$ generated by the Heegner points $\alpha'_n$ has $\overbar{\Lambda}$-corank greater than or equal to two.
\end{conjecture}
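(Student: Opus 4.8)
The plan is to exploit the fact that supersingular reduction at $p \ge 5$ forces $a_p = 0$, so that the Heegner point norm relations acquire a two-step, ``skip a level'' shape; this doubling of the natural Euler system is exactly what should produce corank two rather than the corank one of the ordinary case. First I would write down, for the family $\alpha'_n \in E'(K_n)$ attached to $\pi'$, the norm-compatibility relation along the anticyclotomic tower. In the split supersingular case it takes the form
\[
\cores_{K_{n+1}/K_n}(\alpha'_{n+1}) = a_p\,\alpha'_n - \sigma\,\alpha'_{n-1}, \qquad n \ge 1,
\]
for a suitable $\sigma \in \Gamma$ coming from a Frobenius at a prime above $p$. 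Since $|a_p| \le 2\sqrt{p} < p$ while $a_p \equiv 0 \pmod p$, we get $a_p = 0$, and the relation collapses to $\cores_{K_{n+1}/K_n}(\alpha'_{n+1}) = -\sigma\,\alpha'_{n-1}$: the corestriction of a level-$(n+1)$ point is controlled by the level-$(n-1)$ point, never by the level-$n$ point. Working with the strong Weil curve $E'$, whose parametrization $\pi'$ has geometrically connected kernel, is what keeps these points optimally non-divisible, so that the relation survives intact after reduction mod $p$.

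Next, imitating Kobayashi's plus/minus formalism, I would assemble from the $\alpha'_n$ two separate norm-coherent systems --- morally the even-level and odd-level subfamilies, repackaged as classes $\alpha^{\prime +}_n$ and $\alpha^{\prime -}_n$ that are genuinely compatible under $\cores_{K_{n+1}/K_n}$ up to the sign and $\sigma$-twist above. Each system defines a $\overbar{\Lambda}$-submodule of $E'(K_{\infty})/p$, and the content of the two-step recursion is that the plus and minus systems do not feed into one another, so they ought to be $\overbar{\Lambda}$-independent. To see that neither system is torsion I would invoke the non-triviality theorems of Cornut and Vatsal, refined to a statement modulo $p$: the $\alpha'_n$ are non-torsion for $n \gg 0$, and one needs their images in $E'(K_n)/p$ to stay nonzero and to grow, so that the $\Gamma_n$-invariants $\mathcal{H}^{\Gamma_n} = \mathcal{H}[\omega_n]$, with $\omega_n = (1+T)^{p^n} - 1$, have $\Fp$-dimension growing like $2p^n$.

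The final step is the bookkeeping that converts this growth into the corank bound: over $\overbar{\Lambda} = \Fp[[T]]$ a cofinitely generated module has corank $r$ precisely when $\dim_{\Fp} \mathcal{H}[\omega_n]$ grows like $r\,p^n$, so producing two independent non-torsion coherent systems yields $\corank_{\overbar{\Lambda}} \mathcal{H} \ge 2$. The main obstacle --- and the reason the statement can only be advanced as a conjecture --- lies in the mod-$p$ input of the middle step: the existing non-vanishing results control individual Heegner points, or special $L$-values, over $\Q$, whereas here one needs the \emph{simultaneous} non-triviality modulo $p$ and $\overbar{\Lambda}$-independence of the two supersingular subsystems, together with the exclusion of any hidden $\overbar{\Lambda}$-torsion relation that could collapse the rank from two to one. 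Establishing this mod-$p$ independence uniformly up the tower is the crux of the argument.
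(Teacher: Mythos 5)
The statement you are addressing is stated in the paper as a \emph{conjecture}, and the paper itself offers no proof of it; what it does offer (Theorem \ref{Heegner_modules_rank_theorem} and the remark following it) is precisely the partial result you sketch, namely that each of the two subsystems $\dlim R_{2n}\alpha_{2n}$ and $\dlim R_{2n+1}\alpha_{2n+1}$ (the even- and odd-level families forced by the $a_p=0$ two-step norm relation) is non-cotorsion over $\overbar{\Lambda}$, together with the observation that the only missing ingredient is their $\overbar{\Lambda}$-independence, i.e.\ that $\dlim R_{2n}\alpha_{2n}\cap\dlim R_{2n+1}\alpha_{2n+1}$ is torsion. Your proposal lands on exactly this structure and correctly names the independence of the two subsystems as the unprovable crux, so in that sense it matches the paper's own assessment of the conjecture. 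The one substantive difference is in how the non-torsion of each subsystem is obtained: you appeal to a ``mod $p$ refinement'' of Cornut--Vatsal non-vanishing and to growth of $\dim_{\Fp}\mathcal{H}[\omega_n]$ on the order of $2p^n$ (which, as stated, already presupposes corank two and hence the conjecture itself), whereas the paper's Theorem \ref{Heegner_modules_rank_theorem} argues more economically: since a finitely generated torsion $\overbar{\Lambda}$-module is finite, it suffices to show each subsystem is infinite, and this is extracted from the surjectivity of the simultaneous reduction maps of CM points onto products of supersingular loci (Cornut, Cornut--Vatsal), applied separately to the sets $X^+$ and $X^-$ of Heegner points of even and odd $p$-power conductor. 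Two minor cautions: the paper's norm relation $\mathrm{Tr}_{K_{n+1}/K_n}(\alpha_{n+1})=a_p\alpha_n-\alpha_{n-1}$ carries no Frobenius twist $\sigma$, and the growth criterion you invoke should be phrased as the goal rather than as an input. With those adjustments your outline is a faithful account of the evidence the paper provides and of why the full statement remains conjectural.
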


We will give strong evidence in support of this conjecture. See theorem \ref{Heegner_modules_rank_theorem} and the remarks following it.

In section 3 we will give a new proof to the following theorem of Bertolini \cite{Bertolini}

\begin{theoremA}
Assume that $E$ has good ordinary reduction at $p$, then $\Selinf(E/K_{\infty})$ has $\Lambda$-corank equal to 1 and $X_{p^\infty}(E/K_{\infty})$ is a free $\Lambda$-module of rank 1.
\end{theoremA}

In section 4 we will prove the following theorem

\begin{theoremB}
Assume that $p$ splits in $K/\Q$, $E$ has good supersingular reduction at $p$ and conjecture \ref{main_conjecture} is true, then $\Selinf(E/K_{\infty})$ has $\Lambda$-corank equal to 2 and $X_{p^{\infty}}(E/K_{\infty})=\{0\}$
\end{theoremB}

The fact that $\Selinf(E/K_{\infty})$ has $\Lambda$-corank equal to 2 when both $E$ has good supersingular reduction at $p$ and $p$ splits in $K/\Q$ was proven by Ciperiani \cite{Ciperiani}. However, assuming the above conjecture, our proof of this fact will be different.

Regarding the two theorems above, we should note that neither Bertolini nor Ciperiani require that $p$ does not divide $\varphi(Nd_K)$ whereas we assume it for our proof.

Our proofs to Theorems A and theorem \ref{Heegner_modules_rank_theorem} in section 4 (the latter theorem gives strong evidence supporting conjecture \ref{main_conjecture}) very much rely on the work of Cornut (\cite{Cornut} Theorem B) which proves that if $p \nmid \varphi(Nd_K)$ then the $\Fp$-vector span of $\{\sigma(\alpha'_n) \otimes 1 \, | \, \sigma \in \Gal(K_{\infty}/K) \, \text{and} \, n \geq 0\} \subset E'(K_{\infty})\otimes \Fp$ has infinite dimension.

Our method of proof is an adaptation of the technique of Bertolini and Darmon \cite{BD} to our Iwasawa theoretic setting. As mentioned above, throughout most of the paper will work with the ``mod $p$'' Iwasawa algebra $\overbar{\Lambda}=\Fp[[T]]$

Specifically, we let $X_p(E/K_{\infty})=X_{p^{\infty}}(E/K_{\infty})/p$. We will show in the ordinary case (theorem A) that the $\overbar{\Lambda}$-rank of $X_p(E/K_{\infty})$ is less than or equal to one and in the supersingular case (theorem B) that $X_p(E/K_{\infty})=\{0\}$.  It then follows in the ordinary case that the $\Lambda$-rank of $X_{p^{\infty}}(E/K_{\infty})$ is also less than or equal to one and in the supersingular case that $X_{p^{\infty}}(E/K_{\infty})=\{0\}$

Let us now define $Y_{p^{\infty}}(E/K_{\infty})=\ilim T_p\Selinf(E/K_{\infty})^{{\Gamma}^{p^n}}$ where the inverse limit is taken over $n$ with respect to the norm maps.

The control theorem in section 2.3 in the ordinary case (which is an easy consequence of Mazur's control theorem) gives that the restriction maps induce an isomorphism $X_{p^{\infty}}(E/K_{\infty}) \isomarrow Y_{p^{\infty}}(E/K_{\infty})$. But then as we will explain in section 2.3 $Y_{p^{\infty}}(E/K_{\infty})$ is a free $\Lambda$-module whose rank is equal to the $\Lambda$-corank $\Selinf(E/K_{\infty})$. This together with the simple observation that the $\Lambda$-corank of $\Selinf(E/K_{\infty})$ is greater than or equal to one will prove theorem A.

In the supersingular case the control theorem in section 2.3 gives that the restriction maps induce an injection $X_{p^{\infty}}(E/K_{\infty}) \hookrightarrow Y_{p^{\infty}}(E/K_{\infty})$ with cokernel of $\Lambda$-rank less than or equal to two. Since $X_{p^{\infty}}(E/K_{\infty})=\{0\}$ we get that the $\Lambda$-rank of $Y_{p^{\infty}}(E/K_{\infty})$ is less than or equal to two. It then follows that the $\Lambda$-corank of $\Selinf(E/K_{\infty})$ is also less than or equal to two since it is equal to the $\Lambda$-rank of $Y_{p^{\infty}}(E/K_{\infty})$. But by a well-known result in the supersingular case we know that the $\Lambda$-rank of $\Selinf(E/K_{\infty})$ is greater than or equal to two. This then proves theorem B.

\section{Preliminaries}

\subsection{Notation and Assumptions}

First we list the assumptions we need for theorems A and B. As in the introduction, $E$ is an elliptic curve of conductor $N$ defined over $\Q$ and $K$ be an imaginary quadratic field  with discriminant $d_K\neq -3,-4$ such that all the primes dividing $N$ split in $K/\Q$. We will denote the class number of $K$ by $h_K$. Throughout the paper we assume that $p \geq 5$ is a prime such that $p \nmid Nd_Kh_K \varphi(Nd_K)$. We will also assume that $\Gal(\Q(E[p])/\Q)=GL_2(\Fp)$. Asumming $E$ has no complex multiplication this excludes a finite number of primes by a theorem of Serre \cite{Serre}. In addition to these assumptions we further asuume the following for theorems A and B.

\noindent For theorem A we assume:\\

\noindent(1) $E$ has good ordinary reduction at $p$\\
(2) $p \nmid E(\Fp)$\\
(3) $a_p \not \equiv -1 \; (\text{mod } p)$ if $p$ is inert in $K/\Q$\\
(4) $a_p \not \equiv 2 \; (\text{mod } p)$ if $p$ splits in $K/\Q$\\

\noindent For theorem B we assume:\\

\noindent(1) $p$ splits in $K/\Q$\\
(2) $E$ has good supersingular reduction at $p$\\

Regarding the assumptions for theorem A, lets assume that $p$ splits in $K/\Q$. Then if $p > 7$, conditions (1), (2) and (4) are equivalent to $a_p \neq 0,1,2$ by the Hasse bound on $a_p$. As explained in \cite{Bertolini} pg. 166 the set of primes $p$ such that $a_p \neq 0,1,2$ has density 1 . We get a similar conclusion when $p$ is inert in $K/\Q$.

We will now explain the notation that we will use throught the paper. We fix a complex conjugation $\tau$ on $\overbar{\Q}$ (the algebraic closure of $\Q$). Given a $\Z[\frac{1}{2}][\tau]$-module $M$, we have a decomposition $M=M^+ \oplus M^-$ where $M^+$ and $M^-$ denotes the submodule on which $\tau$ acts as $+1$, respectively $-1$. Also, if $x \in M$ and $X \subset M$, we let

$$x^{\pm}=\frac{1}{2}(x\pm \tau x)$$
$$X^{\pm}=\{x^{\pm} \; | \; x \in X\}$$\\
For any $m$ we let $K[m]$ denote the ring class field of $K$ of conductor $m$. Let $K[p^{\infty}]=\cup_{n \geq 1} K[p^n]$. Then  $\Gal(K[p^{\infty}]/K)$ is isomorphic to $\Zp \times \Delta$, where $\Delta$ is a finite abelian group. The unique $\Zp$-extension that is contained in $K[p^{\infty}]/K$ is the anticyclotomic $\Zp$-extension of $K$ which we will denote by $K_{\infty}/K$. We let $K_n$ be the subextension of $K_{\infty}$ of degree $p^n$ over $K$

Let $\Gamma=\Gal(K_{\infty}/K)$. We will write $\Gamma_n$ for the Galois group $\Gal(K_{\infty}/K_n)=\Gamma^{p^n}$, $G_n$ for the Galois group $\Gal(K_n/K)=\Gamma/\Gamma_n$ and $R_n$ for the group ring $\Fp[G_n]$

We let $\Lambda=\Zp[[\Gamma]]$ be the Iwasawa algebra attached to $K_{\infty}/K$. Fixing a topological generator $\gamma \in \Gamma$ allows us to identify $\Lambda$ with the power series ring $\Zp[[T]]$

We will also work with the ``mod $p$'' Iwasawa algebra $\overbar{\Lambda}=\Lambda/p\Lambda=\Fp[[T]]$. Note that $\overbar{\Lambda}$ is a PID.

Let us now define the Selmer groups we will be working with: If $L/\Q$ is any algebraic extension we let $\Selinf(E/L)$ denote the $p^{\infty}$-Selmer group of $E$ over $L$ defined by
$$\displaystyle 0 \longrightarrow \Selinf(E/L) \longrightarrow H^1(L, E[p^{\infty}])\longrightarrow \prod_v H^1(L_v, E)[p^{\infty}]$$

We will also be working with the $p$-Selmer group $\Sel(E/L)$ defined by

$$\displaystyle 0 \longrightarrow \Sel(E/L) \longrightarrow H^1(L, E[p])\longrightarrow \prod_v H^1(L_v, E)[p]$$

Finally, if $\ell$ is a rational prime and $F$ is a number field we define

\begin{flalign*}
\qquad \qquad &E(F_{\ell})/p:=\oplus_{\lambda|\ell}E(F_{\lambda})/p&\\[0.5em]
&H^1(F_{\ell}, E[p]):=\oplus_{\lambda|\ell}H^1(F_{\lambda}, E[p])&\\[0.5em]
&H^1(F_{\ell}, E)[p]:=\oplus_{\lambda|\ell}H^1(F_{\lambda}, E)[p]
\end{flalign*}\\
\noindent where the sum is taken over all primes of $F$ dividing $\ell$

With this notation we let $\res_{\ell}$ be the localization map:

\begin{flalign*}
\qquad \qquad &\res_{\ell}: E(F)/p \to E(F_{\ell})/p&\\[0.5em]
&\res_{\ell}: H^1(F, E[p]) \to H^1(F_{\ell}, E[p])&\\[0.5em]
&\res_{\ell}: H^1(F, E)[p] \to H^1(F_{\ell}, E)[p]
\end{flalign*}\\
If $F=K_n$, with the above notation we let $K_{n,\ell}$ denote $F_{\ell}$

We will frequently write $\dlim$ (resp. $\ilim$) for $\dlim_n$ (resp. $\ilim_n$) as out limits are taken over $n$.

\subsection{Heegner points and Kolyvagin classes}
We fix a modular parametrization $\pi: X_0(N) \to E$ which maps the cusp $\infty$ of $X_0(N)$ to the origin of $E$ (see \cite{Wiles} and \cite{BDCT})
We have assumed that every prime dividing $N$ splits in $K/\Q$. It follows that we can choose and ideal $\cN$ such that $\cO_K/\cN \cong \Z/N\Z$. Let $m$ be an integer that is relatively prime to $Nd_K$ and let $\cO_m = \Z + m\cO_K$ be the order of conductor $m$ in $K$. The ideal $\cN_m=\cN \cap \cO_m$ satisfies $\cO_m/\cN_m \cong \Z/N\Z$ and therefore the natural projection of complex tori:

$$\C/\cO_m \to \C/\cN_m^{-1}$$\\
is a cyclic $N$-isogeny, which corresponds to a point of $X_0(N)$. Let $\alpha[m]$ be its image under the modular parametrization $\pi$. From the theory of complex multiplication we have that $\alpha[m] \in E(K[m])$ where $K[m]$ is the ring class field of $K$ of conductor $m$.

Then as we have assumed throughout the paper that the class number of $K$ is not divisible by $p$, it follows for any $n$ that $K[p^{n+1}]$ is the ring class field of minimal conductor that that contains $K_n$. We now define $\alpha_n \in E(K_n)$ to be the trace from $K[p^{n+1}]$ to $K_n$ of $\alpha[n]$

Let $R_n\alpha_n$ denote the $R_n$-submodule of $H^1(K_n, E[p])$ generated by the image of $\alpha_n$ under the map

$$E(K_n) \to H^1(K_n, E[p])$$\\
We have that $E(K_{\infty}[p^{\infty}])=\{0\}$ by corollary \ref{Ep_corollary} of the next section. This implies that the restriction map for $m\geq n$

$$H^1(K_n, E[p]) \to H^1(K_m, E[p])$$\\
\noindent is injective and therefore allows us to view $R_n\alpha_n$ as a submodule of $H^1(K_m, E[p])$

From section 3.3 of \cite{PR} we have

\begin{align}
&\left\{
\begin{aligned}
\text{Tr}_{K_1/K}(\alpha_1) &= (a_p-a_p^{-1}(p+1))\alpha_0 \quad \text{if $p$ is inert in $K/\Q$}\\
\text{Tr}_{K_1/K}(\alpha_1) &= (a_p-(a_p-2)^{-1}(p-1))\alpha_0 \quad \text{if $p$ splits in $K/\Q$}\\
\end{aligned}
\right.\\[10pt]
&\text{Tr}_{K_{n+1}/K_n}(\alpha_{n+1})=a_p\alpha_n-\alpha_{n-1} \quad \text{for $n \geq 1$}
\end{align}\\
In the ordinary case (theorem A), our assumptions together with (1) and (2) allow us to conclude that (see \cite{Bertolini} prop 2.1.4) $\text{Tr}_{K_{n+1}/K_n}(\alpha_{n+1})=u\alpha_n$ for some unit $u \in R_n$. This implies that $R_n\alpha_n \subset R_{n+1}\alpha_{n+1}$

In the supersingular case (theorem B), the fact that $E$ has supersingular reduction at $p$ and $p \geq 5$ implies that $a_p=0$ so (2) becomes $\text{Tr}_{K_{n+1}/K_n}(\alpha_{n+1})=-\alpha_{n-1}$. This then implies that $R_n\alpha_n \subset R_{n+2}\alpha_{n+2}$

As in \cite{CW} section 2.5.1, we now describe the construction of Kolyvagin classes over ring class fields following \cite{BD}. However, we should note that our definition of Kolyvagin classes differs slightly from the one in \cite{CW} and \cite{Bertolini}. First let us make the following definition

\begin{definition} A rational prime $\ell$ is called a \textit{Kolyvagin prime} if\\
(i) $\ell$ is relatively prime to $pNd_K$\\
(ii) $\Frob_{\ell}(K(E[p])/\Q)=[\tau]$
\end{definition}

Let $r$ be a squarefree product of Kolyvagin primes. For any $n$ let $K_n[r]$ denote the field $K_nK[r]$. We now define $\alpha_n(r)$ to be the trace of $\alpha[rp^{n+1}]$ from $K[rp^{n+1}]$ to $K_n[r]$

Let $G_{n,r}=\Gal(K_n[r]/K_n[1])$ and let $G_{n,\ell}=\Gal(K_n[\ell]/K_n[1])$. By class field theory $G_{n,r}=\prod_{\ell|r}G_{n,\ell}$ and $G_{n,\ell}$ is cyclic of order $\ell+1$. Let $\sigma_{\ell}$ be a generator of $G_{n,\ell}$ and define $D_{\ell} :=\sum_{i=1}^{\ell} i\sigma_{\ell}^i \in \cy{p}[G_{n,\ell}]$ and $D_r:=\prod_{\ell|r}D_{\ell} \in \cy{p}[G_{n,r}]$. Then one can show that $D_r\alpha_n(r)$ belongs to $(E(K_n[r]/p))^{G_{n,r}}$ (see \cite{BD} lemma 3.3). It follows that $\text{Tr}_{K_n[1]/K_n} D_r \alpha_n(r) \in (E(K_n[r])/p)^{\cG_n,r}$ where $\cG_{n,r}=\Gal(K_n[r]/K_n)$. Now consider the commutative diagram

\begin{equation}\label{kolyvagin_classes_diagram}
\xymatrix{
&&& 0 \ar[d]\\
&&& H^1(K_n[r]/K_n, E)[p] \ar[d]_{\infl}\\
0 \ar[r] & E(K_n)/p \ar[d] \ar[r]^-{\phi} & H^1(K_n, E[p]) \ar[d]^{\rotatebox{90}{$\text{\~{}}$}}_{\res} \ar[r] & H^1(K_n, E)[p] \ar[d]_{\res} \ar[r] & 0\\
0 \ar[r] & (E(K_n[r])/p)^{\cG_{n,r}} \ar[r]^-{\phi_r} & H^1(K_n[r], E[p])^{\cG_{n,r}} \ar[r] & H^1(K_n[r],E)[p]^{\cG_{n,r}}
}
\end{equation}

Let $c_n(r) \in H^1(K_n, E[p])$ be so that

$$\phi_r(\text{Tr}_{K_n[1]/K_n} D_r\alpha_n(r))=\text{Res}(c_n(r))$$\\
and let $d_n(r)$ be the image of $c_n(r)$ in $H^1(K_n, E)[p]$. Note that $c_n(1) = \phi(\alpha_n)$\\
These Kolyvagin classes have the following properties:
\begin{enumerate}
\item Let $-\epsilon$ denote the sign of the functional equation of the L-function of $E/\Q$, $f_r$ be the number of prime divisors of $r$ and $\tau$. We have $\tau\alpha_n=\epsilon g^{i_{n,1}}\alpha_n + \beta_n$ with $\beta_n \in E(K_n)_{\text{tors}}$, $g$ a generator of $\Gal(K_{\infty}/K)$ and $i_{n,1} \in \{0,...,p^n-1\}$. Moreover, $\tau$ acts on $H^1(K_n, E[p])$ and we can deduce that $\tau c_n(r)=\epsilon_r g^{i_{n,r}}c_n(r)$ where $\epsilon_r=(-1)^{f_r}\epsilon$ and $i_{n,r}\in \{0,...,p^n-1\}$
\item If $v$ is a rational prime that does not divide $r$, then $d_n(r)_{v_n}=0$ in $H^1(K_{v_n}, E)[p]$ for all primes of $K_n \, \, v_n|v$
\item If $\ell|r$, there exists a $G_n$-equivariant and a $\tau$-antiequivariant isomorphism:
$$\psi_{n, \ell}: H^1(K_{n,\ell}, E)[p] \to E(K_{n,\ell})/p$$
such that $\psi_{n,\ell}(\res_{\ell}d_n(r))=\res_{\ell}(c_n(r/\ell))$\\
If we let $\res_n$ denote the restriction maps $H^1(K_{n,\ell}, E)[p] \to H^1(K_{n+1, \ell}, E)[p]$ and $E(K_{n,\ell})/p \to E(K_{n+1, \ell})/p$, then we have \\$\psi_{n+1, \ell} \circ \res_n = \res_n \circ \psi_{n, \ell}$
\item In the ordinary case, just as $R_n\alpha_n \subset R_{n+1}\alpha_{n+1}$ we also have $R_nc_n(r) \subset R_{n+1}c_{n+1}(r)$ and $R_nd_n(r) \subset R_{n+1}d_{n+1}(r)$\\
    In the supersingular case, just as $R_n\alpha_n \subset R_{n+2}\alpha_{n+2}$ we also have $R_nc_n(r) \subset R_{n+2}c_{n+2}(r)$ and $R_nd_n(r) \subset R_{n+2}d_{n+2}(r)$
\end{enumerate}

We end this section with the following proposition. Just as in \cite{Gross} prop. 3.7 we have

\begin{proposition}\label{Eichler-Shimura_proposition}
Every prime $\lambda_{\ell m}$ of $K_n[\ell m]$ above $\ell$ lies above a unique prime $\lambda_m$ of $K_n[m]$ and we have $\alpha_n(\ell m) \equiv \Frob(\lambda_m/\ell)\alpha_n(m) \mod \lambda_{\ell m}$
\end{proposition}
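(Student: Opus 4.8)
The first assertion is automatic: a prime $\lambda_{\ell m}$ of $K_n[\ell m]$ restricts to a unique prime $\lambda_m:=\lambda_{\ell m}\cap K_n[m]$ of the subfield $K_n[m]$, so there is nothing to prove there. The content is the congruence, which is the Heegner-point incarnation of the Eichler--Shimura relation (this is the analogue of \cite{Gross} Prop.~3.7). My plan is to prove it first for the basic CM points over the full ring class fields and then to descend to the traces $\alpha_n(\cdot)$.

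I would begin by recording the modular description. Writing $c=mp^{n+1}$, the point $\alpha[c]$ is the image under $\pi$ of the $X_0(N)$-point attached to the cyclic $N$-isogeny $\C/\cO_c\to\C/\cN_c^{-1}$, where $\C/\cO_c$ has CM by the order $\cO_c$; likewise $\alpha[\ell c]$ comes from $\cO_{\ell c}$. Since $\cO_{\ell c}=\Z+\ell c\,\cO_K\subset\cO_c$ has index $\ell$, there is a canonical cyclic $\ell$-isogeny $\eta\colon \C/\cO_{\ell c}\to\C/\cO_c$ compatible with the level-$N$ structures, so that the two CM points sit on a single $T_\ell$-edge of $X_0(N)$.

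Because $\ell$ is a Kolyvagin prime, the condition $\Frob_{\ell}(K(E[p])/\Q)=[\tau]$ forces $\ell$ to be inert in $K/\Q$; hence every elliptic curve with CM by an order of $K$ has supersingular reduction at a prime above $\ell$. The geometric heart of the argument is to reduce $\eta$ modulo a prime $\mathfrak{L}$ of $K[\ell c]$ above $\ell$. On the supersingular curve $\overline{\C/\cO_c}$ over the residue field there is a unique finite subgroup scheme of order $\ell$, namely the kernel of the relative Frobenius $F$, so the reduction $\bar\eta$ of the degree-$\ell$ isogeny must coincide up to isomorphism with $F$. This identifies $\overline{\C/\cO_{\ell c}}$ with the Frobenius twist of $\overline{\C/\cO_c}$; since $\pi$ is defined over $\Q$ it commutes with reduction and with $F$, and we obtain that $\alpha[\ell c]$ reduces to the $\ell$-power-Frobenius image of the reduction of $\alpha[c]$. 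Translating the geometric Frobenius into the arithmetic one by the main theorem of complex multiplication (Shimura reciprocity), and using that supersingularity makes the $\ell$-power Frobenius twist match the $\tau$-conjugate CM structure, one gets $\alpha[\ell c]\equiv\Frob(\mathfrak{L}/\ell)\,\alpha[c]\pmod{\mathfrak{L}}$.

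It remains to descend from $K[\ell c],\,K[c]$ to $K_n[\ell m],\,K_n[m]$ by applying the traces that define $\alpha_n(\ell m)$ and $\alpha_n(m)$, choosing $\mathfrak{L}$ above $\lambda_{\ell m}$ so that $\Frob(\mathfrak{L}/\ell)$ restricts to $\Frob(\lambda_m/\ell)$ on $K_n[m]$. As $\ell\neq p$, reduction modulo $\lambda_{\ell m}$ is compatible with these trace maps, and summing the CM-point congruence over the relevant Galois orbits then yields the stated congruence. I expect the main obstacle to lie precisely in these last two steps: pinning down that the reduction of the specific conductor-raising isogeny $\eta$ is the relative Frobenius itself (and not its Verschiebung dual) and correctly matching the geometric Frobenius on the supersingular fibre with the arithmetic element $\Frob(\lambda_m/\ell)$ through Shimura reciprocity; and verifying that the traces over the auxiliary layers respect both the congruence and the Frobenius, which hinges on the primes above $\ell$ in these towers matching up compatibly because $\ell\neq p$.
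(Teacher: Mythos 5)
The paper offers no proof of this proposition at all --- it simply asserts that it holds ``just as in'' Gross, Prop.~3.7 --- and your sketch is precisely the standard argument underlying that citation: the conductor-raising degree-$\ell$ isogeny between the CM curves reduces, at a prime above the inert Kolyvagin prime $\ell$, to a purely inseparable map (a supersingular curve in characteristic $\ell$ has a unique order-$\ell$ subgroup scheme, namely $\ker F$), which identifies the reductions up to Frobenius; compatibility of the $\Q$-rational parametrization $\pi$ with reduction and descent through the traces defining $\alpha_n(\cdot)$ (legitimate since $\ell\neq p$ is unramified in the relevant layers) then give the congruence. The one point you flag as delicate, the direction of the Frobenius twist, is in fact harmless: the supersingular reductions are rational over $\mathbb{F}_{\ell^2}$ and $\Frob(\lambda_m/\ell)$ has order two there, so the two possible readings of the congruence are equivalent.
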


\subsection{Preliminary Results}

In this section, we collect some preliminary results that will be used in the proofs of theorems A and B. First we have the following important lemma

\begin{lemma}\label{GL2_lemma}
The extensions $\Q(E[p])/\Q$ and $K_{\infty}/\Q$ are linearly disjoint. In particular, $\Gal(K_n(E[p])/K_n)$ is isomorphic to $GL_2(\Fp)$ for any $n$
\end{lemma}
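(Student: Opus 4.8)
The plan is to prove the linear disjointness assertion and then read off the statement about $\Gal(K_n(E[p])/K_n)$ as an immediate consequence. Recall that $K_{\infty}/\Q$ is itself Galois: the anticyclotomic $\Zp$-extension sits inside $\cup_n K[p^n]$, and complex conjugation $\tau$ acts on $\Gamma=\Gal(K_{\infty}/K)$ by inversion, so $\Gal(K_{\infty}/\Q)\cong \Gamma \rtimes \Gal(K/\Q)$ is the pro-dihedral group. Since $\Q(E[p])/\Q$ is also Galois, linear disjointness over $\Q$ is equivalent to $\Q(E[p])\cap K_{\infty}=\Q$, and this is the single equality I would establish. Granting it, $\Q(E[p])$ and $K_n\subseteq K_{\infty}$ are linearly disjoint over $\Q$, their compositum is $K_n(E[p])$, and restriction gives $\Gal(K_n(E[p])/K_n)\cong \Gal(\Q(E[p])/\Q)=GL_2(\Fp)$, which is the ``in particular'' clause.

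To prove $F:=\Q(E[p])\cap K_{\infty}=\Q$, I would observe that $F/\Q$ is Galois (an intersection of two Galois extensions) and that $\Gal(F/\Q)$ is simultaneously a quotient of $GL_2(\Fp)$ and of the pro-dihedral group $\Gal(K_{\infty}/\Q)$. Every finite quotient of a pro-dihedral group is solvable, so $\Gal(F/\Q)$ is a solvable quotient of $GL_2(\Fp)$; since $p\geq 5$ the subgroup $SL_2(\Fp)$ is perfect and equals the commutator subgroup of $GL_2(\Fp)$, so any solvable quotient of $GL_2(\Fp)$ factors through the determinant and is therefore a quotient of the cyclic group $\Fp^{\times}$. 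In particular $\Gal(F/\Q)$ is abelian. But the abelianization of the pro-dihedral group is $\Z/2$, so an abelian quotient of $\Gal(K_{\infty}/\Q)$ is a quotient of $\Z/2$. Hence $\Gal(F/\Q)$ is either trivial or $\Z/2$.

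It remains to exclude $[F:\Q]=2$, and this is where the hypothesis $p\nmid d_K$ is essential. On one hand, the only quadratic subfield of $K_{\infty}$ is $K$, because the unique index-two subgroup of $\Gal(K_{\infty}/\Q)$ is $\Gamma$. On the other hand, because the determinant of the mod-$p$ representation is the cyclotomic character, the determinant map identifies the maximal abelian quotient of $\Gal(\Q(E[p])/\Q)$ with $\Fp^{\times}$ and the unique quadratic subfield of $\Q(E[p])$ with the quadratic subfield $\Q(\sqrt{p^{\ast}})$ of $\Q(\zeta_p)$, where $p^{\ast}=(-1)^{(p-1)/2}p$. Thus $[F:\Q]=2$ would force $K=\Q(\sqrt{p^{\ast}})$; but $\Q(\sqrt{p^{\ast}})$ is ramified at $p$, whereas $p\nmid d_K$ means $p$ is unramified in $K$, a contradiction. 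Therefore $F=\Q$.

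The routine parts are the equivalence between disjointness and trivial intersection and the final restriction isomorphism. The step I expect to carry the real weight is the group-theoretic reduction: pinning down that a common quotient of $GL_2(\Fp)$ and a pro-dihedral group must be abelian (via perfectness of $SL_2(\Fp)$) and hence of order at most two, after which the arithmetic input $p\nmid d_K$ cleanly finishes the argument by separating $K$ from the cyclotomic quadratic field sitting inside $\Q(E[p])$.
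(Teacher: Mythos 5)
Your argument is correct, but it organizes the proof differently from the paper. The paper proceeds in two steps: first it shows $\Q(E[p])\cap K=\Q$ by pure ramification (the two fields are ramified at disjoint sets of primes, so the intersection is unramified everywhere over $\Q$ and hence trivial by Minkowski), and then it shows $K(E[p])\cap K_{\infty}=K$ by the group-theoretic observation that $GL_2(\Fp)$ has no normal subgroup of index $p$, since such a subgroup would meet $SL_2(\Fp)$ in a normal subgroup of order and index both greater than $2$, contradicting the simplicity of $PSL_2(\Fp)$ for $p\ge 5$. You instead treat the whole intersection $F=\Q(E[p])\cap K_{\infty}$ at once: a common quotient of $GL_2(\Fp)$ and the pro-dihedral group $\Gal(K_{\infty}/\Q)$ is solvable, hence factors through $\det$ by perfectness of $SL_2(\Fp)$, hence is abelian and therefore a quotient of the abelianization $\Z/2\Z$ of the pro-dihedral group; the residual quadratic case is then killed arithmetically by identifying the unique quadratic subfield of $\Q(E[p])$ as $\Q(\sqrt{p^{\ast}})\subset\Q(\zeta_p)$ via the Weil pairing and invoking $p\nmid d_K$. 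Both routes ultimately rest on the simplicity/perfectness of $(P)SL_2(\Fp)$ for $p\ge 5$. What your version buys is a sharper localization of the obstruction (everything reduces to the quadratic level) and a weaker arithmetic input (only $p\nmid d_K$, rather than the full coprimality of $d_K$ with $Np$ plus Minkowski); what the paper's version buys is that its first step is entirely elementary ramification theory and its second step isolates the $\Zp$-part of the extension cleanly. One small point worth making explicit in your write-up: the index-two closed subgroups of $\Gal(K_{\infty}/\Q)$ all contain the closure of the commutator subgroup, which equals $\Gamma$ because $\tau\gamma\tau^{-1}\gamma^{-1}=\gamma^{-2}$ and squaring is an automorphism of $\Zp$ for odd $p$; this is what justifies your claim that $K$ is the only quadratic subfield of $K_{\infty}$.
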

\begin{proof}
First we show that $\Q(E[p])/\Q$ and $K/\Q$ are disjoint: the extension $\Q(E[p])/\Q$ is ramified only at primes dividing $Np$. This implies that the intersection of $\Q(E[p])$ and $K$ is an unramified extension of $\Q$ and therefore must be $\Q$ itself. Therefore $\Q(E[p])/\Q$ and $K/\Q$ are disjoint and we have $\Gal(K(E[p])/K)=\Gal(\Q(E[p])/\Q)=GL_2(\Fp)$

Now we show that $K(E[p])/K$ and $K_{\infty}/K$ are disjoint. If they were not disjoint then $\Gal(K(E[p])/K)=GL_2(\Fp)$ would have a normal subgroup $N$ of index $p$. As $SL_2(\Fp)$ has index $p-1$ in $GL_2(\Fp)$, therefore by order considerations we must have that $N \cap SL_2(\Fp)$ is a subgroup of $SL_2(\Fp)$ of both order and index greater than 2. But this is impossible as $PSL_2(\Fp)$ is simple for $p \geq 5$
\end{proof}

\begin{corollary}\label{Ep_corollary}
We have $E(K_{\infty})[p^{\infty}]=\{0\}$
\end{corollary}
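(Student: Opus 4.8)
The plan is to deduce this directly from Lemma \ref{GL2_lemma}, reducing first to a statement about $p$-torsion and then invoking the fact that the standard representation of $GL_2(\Fp)$ on $\Fp^2$ has no nonzero invariant vectors.

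First I would reduce to showing $E(K_{\infty})[p]=\{0\}$. Indeed, if $E(K_{\infty})[p^{\infty}]$ were nonzero it would contain a point $Q$ of some exact order $p^m$ with $m\geq 1$, and then $p^{m-1}Q\in E(K_{\infty})$ would have exact order $p$, so that $E(K_{\infty})[p]\neq\{0\}$. Hence it suffices to rule out nonzero $p$-torsion defined over $K_{\infty}$.

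Next, since every torsion point of $E(K_{\infty})$ is already defined over some finite layer and $K_{\infty}=\bigcup_n K_n$, we have $E(K_{\infty})[p]=\bigcup_n E(K_n)[p]$, so it is enough to show $E(K_n)[p]=\{0\}$ for every $n$. Now $E(K_n)[p]$ is precisely the subspace of $E[p]\cong\Fp^2$ fixed by $\Gal(\overbar{\Q}/K_n)$, and this action factors through $\Gal(K_n(E[p])/K_n)$, which by Lemma \ref{GL2_lemma} is the full group $GL_2(\Fp)$. A vector $v\in\Fp^2$ fixed by every scalar matrix in $GL_2(\Fp)$ satisfies $cv=v$ for all $c\in\Fp^{\times}$; since $p\geq 5>2$ this forces $v=0$. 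Therefore $E(K_n)[p]=\{0\}$ for all $n$, and the corollary follows.

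There is essentially no serious obstacle here, as the entire content is packaged into Lemma \ref{GL2_lemma}. The only points requiring any care are the elementary reduction from $p^{\infty}$-torsion to $p$-torsion and the observation that the scalar matrices alone already annihilate every nonzero candidate fixed vector, so that one does not even need the finer structure of $GL_2(\Fp)$ beyond knowing it contains $\Fp^{\times}$ acting by scalars.
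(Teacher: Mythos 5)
Your proof is correct and follows exactly the route the paper intends: the corollary is stated without proof precisely because it follows from Lemma \ref{GL2_lemma} in the way you describe, namely that $\Gal(K_n(E[p])/K_n)\cong GL_2(\Fp)$ has no nonzero fixed vector in $E[p]$ (the scalars alone suffice since $p\geq 5$), so $E(K_n)[p]=\{0\}$ for all $n$ and hence $E(K_{\infty})[p^{\infty}]=\{0\}$. Your reductions from $p^{\infty}$-torsion to $p$-torsion and from $K_{\infty}$ to the finite layers are both sound.
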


Now for any $n$ and any rational prime $\ell$, local Tate duality gives non-degenerate pairing (see \cite{Gross} prop. 7.5)

\begin{equation}
\langle \; , \; \rangle_{\ell}: E(K_{n,\ell})/p \times H^1(K_{n,\ell}, E)[p] \to \Fp
\end{equation}\\
This identifies $H^1(K_{n, \ell}, E)[p]$ with $(E(K_{n,\ell})/p)^{\dual}$

Moreover, if $a \in E(K_{n+1,\ell})/p$ and $b \in H^1(K_{n,\ell}, E)[p]$, then a property of Tate local duality gives $\langle\cores a, b\rangle=\langle a, \res b\rangle$ where

$$\res: H^1(K_{n, \ell}, E)[p] \to H^1(K_{n+1, \ell}, E)[p]$$\\
is the restriction map and

$$\cores : E(K_{n+1, \ell})/p \to E(K_{n, \ell})/p$$\\
is the corestriction map (the norm map). Therefore Tate local duality induces an isomorphism

\begin{equation}\label{Tatelocalduality_isomorphism}
\dlim H^1(K_{n,\ell}, E)[p] \cong (\ilim E(K_{n,\ell})/p)^{\dual}
\end{equation}\\
where the direct limit is taken over $n$ with respect to the restriction maps and the inverse limit is taken over $n$ with respect to the corestriction maps

The $p$-Selmer group $\Sel(E/K_n)$ consists of the cohomology classes $s \in H^1(K_n, E[p])$ whose restrictions $\res_v(s)\in H^1(K_{n,v}, E[p])$ belong to $E(K_{n,v})/p$ for all primes $v$ of $K_n$ where we view $E(K_{n,v})/p$ as a subspace of $H^1(K_{n,v}, E[p])$ using the Kummer sequence

$$\displaystyle 0 \longrightarrow E(K_{n,v})/p \longrightarrow H^1(K_{n,v}, E[p]) \longrightarrow H^1(K_{n,v}, E)[p] \longrightarrow 0$$\\
Therefore we have a map

\begin{equation}
\res_{\ell}: \Sel(E/K_n) \to E(K_{n,\ell})/p
\end{equation}\\
Let $T_p \Selinf(E/K_n)$ denote the $p$-adic Tate module of $\Selinf(E/K_n)$. We now define $X_{p^{\infty}}(E/K_{\infty}):=\ilim T_p\Selinf(E/K_n)$ where the inverse limit is taken over $n$ with respect to the corestriction maps. We will denote $X_{p^{\infty}}(E/K_{\infty})/p$ by $X_p(E/K_{\infty})$. By corollary \ref{Ep_corollary} for any $n$ we have an isomorphism $\Sel(E/K_n) \isomarrow \Selinf(E/K_n)[p]$. This allows us to view $X_p(E/K_{\infty})$ as being a subgroup of $\ilim \Sel(E/K_n)$. We will use this fact throughout the paper.

The map $\res_{\ell}$ then induces a map

\begin{equation}
\res_{\ell}: X_p(E/K_{\infty}) \to \ilim E(K_{n,\ell})/p
\end{equation}\\
Dualizing this map and using the isomorphism (\ref{Tatelocalduality_isomorphism}) above we get a map

$$\uppsi_{\ell}: \dlim H^1(K_{n,\ell}, E)[p] \to X_p(E/K_{\infty})^{\dual}$$\\
Recall that $\overbar{\Lambda}$ denotes the ``mod $p$'' Iwasawa algebra $\Lambda/p\Lambda=\Fp[[T]]$. Our goal in theorems A and B is to determine the $\overbar{\Lambda}$-rank of $X_p(E/K_{\infty})$ This will be done by determining the $\overbar{\Lambda}$-corank of the image of $\Psi_{\ell}$ for various primes $\ell$. To do this we will need the following important observation

\begin{proposition}\label{Iwasawa_rank_proposition}
If $\ell$ is a Kolyvagin prime, then $\dlim H^1(K_{n,\ell}, E)[p]$ is a cofree $\overbar{\Lambda}$-module of rank 2
\end{proposition}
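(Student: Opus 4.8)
The plan is to pass to the dual side and reduce to an inverse limit of local points. By the Tate local duality isomorphism~(\ref{Tatelocalduality_isomorphism}), $\dlim_n H^1(K_{n,\ell},E)[p] \cong (\ilim_n E(K_{n,\ell})/p)^{\dual}$, so it suffices to show that $\ilim_n E(K_{n,\ell})/p$, the inverse limit along the corestriction (norm) maps, is a \emph{free} $\overbar{\Lambda}$-module of rank $2$; its Pontryagin dual is then cofree of rank $2$, which is exactly the claim.

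The key geometric input is that $\ell$ splits completely in $K_{\infty}/K$. Indeed, the Kolyvagin condition $\Frob_{\ell}(K(E[p])/\Q)=[\tau]$ forces $\ell$ to be inert in $K/\Q$, since $\tau$ restricts to the nontrivial element of $\Gal(K/\Q)$; hence $\Frob_{\ell}$ lies in the nontrivial coset $\Gamma\tau$ of $\Gal(K_{\infty}/\Q)=\Gamma\rtimes\langle\tau\rangle$. Because $\tau$ acts on $\Gamma$ by inversion (the anticyclotomic property), every element $\gamma\tau$ squares to $1$, so $\Frob_{\ell}^2=1$ and the decomposition group of $\ell$ in $\Gamma$ is trivial. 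Consequently $G_n$ acts simply transitively on the $p^n$ primes of $K_n$ above $\ell$, each completion $K_{n,\lambda}$ is the unramified quadratic extension $\Q_{\ell^2}$ of $\Q_{\ell}$, and every local extension $K_{n+1,\mu}/K_{n,\lambda}$ is trivial. Choosing a compatible indexing of the primes by $G_n$ identifies $E(K_{n,\ell})/p$ with the induced module $R_n\otimes_{\Fp}(E(\Q_{\ell^2})/p)$, and identifies the corestriction map $E(K_{n+1,\ell})/p\to E(K_{n,\ell})/p$ with the natural projection $R_{n+1}\to R_n$ tensored with the identity, the local norms being trivial. Passing to the limit, $\ilim_n R_n=\Fp[[\Gamma]]=\overbar{\Lambda}$, so $\ilim_n E(K_{n,\ell})/p\cong \overbar{\Lambda}\otimes_{\Fp}(E(\Q_{\ell^2})/p)$.

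It remains to verify $\dim_{\Fp}E(\Q_{\ell^2})/p=2$. Since $\ell\neq p$ and $\ell\nmid N$, multiplication by $p$ is an isomorphism on the formal group, so $|E(\Q_{\ell^2})/p|=|E(\Q_{\ell^2})[p]|$ and the action on $E[p]$ is unramified. The involution $\tau$ has $\det\tau=-1$ on $E[p]$, because complex conjugation acts by $-1$ on $\mu_p$; as $p$ is odd it is diagonalizable with eigenvalues $+1$ and $-1$, and the conjugate element $\Frob_{\ell}$ has the same eigenvalues. Hence $\Frob_{\ell}^2$ acts trivially on $E[p]$, giving $E[p]\subseteq E(\Q_{\ell^2})$ and $E(\Q_{\ell^2})/p\cong(\cy{p})^2$. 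Therefore $\ilim_n E(K_{n,\ell})/p\cong\overbar{\Lambda}^2$ is free of rank $2$, and dualizing completes the argument.

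I expect the main obstacle to be the splitting analysis of the second paragraph: one must establish carefully that $\ell$ splits completely in $K_{\infty}/K$ and, more delicately, that under the resulting induced-module description the corestriction transition maps become precisely the standard projections $R_{n+1}\to R_n$, so that the inverse limit is exactly $\overbar{\Lambda}$ and not some twist of it. The local dimension count and the appeal to~(\ref{Tatelocalduality_isomorphism}) are then routine.
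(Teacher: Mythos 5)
Your proof is correct and follows essentially the same route as the paper: both arguments rest on the complete splitting of $\ell$ in $K_{\infty}/K$, the local computation $E(K_{n,\lambda})/p\cong(\cy{p})^2$ (you obtain it from the Kolyvagin condition forcing $\Frob_{\ell}^2$ to act trivially on $E[p]$, the paper from Mattuck's theorem together with the splitting of $\ell$ in $K(E[p])/K$), and the Tate local duality isomorphism~(\ref{Tatelocalduality_isomorphism}). The only real difference is presentational: you exhibit $\ilim E(K_{n,\ell})/p$ explicitly as the induced module $\overbar{\Lambda}\otimes_{\Fp}E(\Q_{\ell^2})/p$, whereas the paper counts the $\Fp$-dimension $2p^n$ of the $\Gamma_n$-invariants and deduces the corank and cofreeness from that growth.
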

\begin{proof}
As $\ell$ is inert in $K/\Q$ and $\ell \neq p$, it follows that $\ell$ splits completely in the anticyclotomic $\Zp$-extension $K_{\infty}/K$ and so the $\Gamma_n$-invariants of $\dlim H^1(K_{n,\ell}, E)[p]$ is equal to $H^1(K_{n,\ell}, E)[p]$ which is isomorphic to the dual of $E(K_{n,\ell})/p =\oplus_{\lambda_n|\ell}E(K_{n,\lambda_n})/p$ by local Tate duality. For any $\lambda_n |\ell$ we have by Mattuck's theorem that $E(K_{n,\lambda_n})\cong\Z_{\ell}^2 \times T$ where $T$ is a finite group. This together with the fact that $\ell$ splits in $K(E[p])/K$ implies that $E(K_{n, \lambda_n})/p=\cy{p}\times\cy{p}$ and so the $\Gamma_n$-invariants of $\dlim H^1(K_{n, \ell}, E)[p]$ has $\Fp$-dimension $2p^n$ which implies that the $\overbar{\Lambda}$-corank of $\dlim H^1(K_{n, \ell}, E)[p]$ is equal to 2. It also follows that $\dlim H^1(K_{n, \ell}, E)[p]$ is cofree as a $\overbar{\Lambda}$-module, for if it was not cofree then its $\Gamma$-invariants would have $\Fp$-dimension $2p +c$ for some positive integer $c$.
\end{proof}

Now for any $n$ let $L_n=K_n(E[p])$ and $\cG_n=\Gal(L_n/K_n)$ which is isomorphic to $GL_2(\Fp)$ by lemma \ref{GL2_lemma}, then we have the following proposition (\cite{Gross} prop. 9.1)

\begin{proposition}\label{res_isom_prop}
The restriction map induces an isomorphism:
$$\res: H^1(K_n, E[p]) \isomarrow H^1(L_n, E[p])^{\cG_n} =\Hom_{\cG_n}(\Gal(\overbar{\Q}/L_n), E[p])$$
\end{proposition}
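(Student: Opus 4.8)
The plan is to run the inflation--restriction exact sequence attached to the normal subgroup $\Gal(\overbar{\Q}/L_n) \triangleleft \Gal(\overbar{\Q}/K_n)$ with quotient $\cG_n = \Gal(L_n/K_n)$, acting on the module $E[p]$. Since $L_n = K_n(E[p])$ contains all the $p$-torsion, $\Gal(\overbar{\Q}/L_n)$ acts trivially on $E[p]$; in particular $E[p]^{\Gal(\overbar{\Q}/L_n)} = E[p]$, and $H^1(L_n, E[p]) = \Hom(\Gal(\overbar{\Q}/L_n), E[p])$, whose $\cG_n$-invariants are exactly $\Hom_{\cG_n}(\Gal(\overbar{\Q}/L_n), E[p])$. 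This already supplies the identification on the right-hand side, so it remains only to see that $\res$ is an isomorphism onto this group.

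The five-term exact sequence reads
\begin{equation*}
0 \to H^1(\cG_n, E[p]) \xrightarrow{\infl} H^1(K_n, E[p]) \xrightarrow{\res} H^1(L_n, E[p])^{\cG_n} \to H^2(\cG_n, E[p]),
\end{equation*}
so the whole statement follows once I show that $H^1(\cG_n, E[p])$ and $H^2(\cG_n, E[p])$ both vanish: the first gives injectivity of $\res$, the second gives surjectivity onto the invariants. By Lemma \ref{GL2_lemma} we have $\cG_n \cong GL_2(\Fp)$ acting on $E[p] \cong \Fp^2$ through the standard representation, so the task reduces to the purely group-theoretic assertion $H^1(GL_2(\Fp), \Fp^2) = H^2(GL_2(\Fp), \Fp^2) = 0$.

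To prove this I would exploit the central subgroup $Z \cong \Fp^{\times}$ of scalar matrices inside $GL_2(\Fp)$. Its order $p-1$ is prime to $p$, while $E[p]$ is an $\Fp$-vector space, so $H^q(Z, E[p]) = 0$ for all $q \geq 1$; moreover, since $p \geq 5$ the group $Z$ is nontrivial and no nonzero vector of $\Fp^2$ is fixed by every scalar, so $H^0(Z, E[p]) = E[p]^Z = 0$ as well. Feeding this into the Hochschild--Serre spectral sequence for $Z \triangleleft GL_2(\Fp)$ with quotient $PGL_2(\Fp)$ collapses every $E_2$-term, forcing $H^n(GL_2(\Fp), E[p]) = 0$ for all $n$, in particular for $n = 1,2$. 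Equivalently, one can argue by restriction--corestriction along $Z$, whose composite is multiplication by $|Z| = p-1$, an automorphism of the $p$-torsion module, while every term factors through the vanishing groups $H^i(Z, E[p])$.

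The heart of the argument --- and the only step beyond formal homological algebra --- is the vanishing of the $GL_2(\Fp)$-cohomology; once the scalars are brought in, this is immediate, so I expect no serious obstacle. The one point to verify with care is that $Z$ acts on $E[p]$ with no nonzero fixed vector, which is exactly where the hypothesis $p \geq 5$ and the identification of the image of Galois with the full $GL_2(\Fp)$ from Lemma \ref{GL2_lemma} are genuinely used.
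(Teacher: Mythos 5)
Your argument is correct and is essentially the standard proof of this fact: the paper itself does not prove the proposition but simply cites \cite{Gross}, Proposition 9.1, and the proof given there is exactly your inflation--restriction argument combined with the vanishing of $H^i(GL_2(\Fp), E[p])$, which is forced by the nontrivial central subgroup of scalars acting on $E[p]$ with order prime to $p$ and without nonzero fixed vectors. There is nothing to add; your identification of where $p\geq 5$ (in fact $p\geq 3$ suffices for this step) and Lemma \ref{GL2_lemma} enter is also accurate.
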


From the above proposition we get a pairing

\begin{equation}
[\; , \;]: H^1(K_n, E[p]) \times \Gal(\overbar{\Q}/L_n) \to E[p]
\end{equation}\\
Now assume that $S_n \subset H^1(K_n, E[p])$ is a finite subgroup. Let $\Gal_{S_n}(\overbar{\Q}/L_n)$ be the subgroup consisting of $\rho \in \Gal(\overbar{\Q}/L_n)$ such that $[s,\rho]=0$ for all $s \in S_n$ and let $L_{S_n}$ be the fixed field of $\Gal_{S_n}(\overbar{\Q}/L_n)$. Then $L_{S_n}/K_n$ is a finite Galois extension and the above pairing induces a nondegenerate pairing

\begin{equation}\label{pairing}
[\; , \;]: S_n \times \Gal(L_{S_n}/L_n) \to E[p]
\end{equation}\\
We have the following lemma

\begin{lemma}
The extensions $L_{S_n}/K_n$ and $K_{\infty}/K_n$ are disjoint
\end{lemma}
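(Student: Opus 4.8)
The plan is to reduce the disjointness statement to the single assertion that $K_{n+1} \not\subseteq L_{S_n}$, and then to derive a contradiction from the fact that $GL_2(\Fp)$ has no quotient of $p$-power order. Since $\Gal(K_{\infty}/K_n)=\Gamma_n\cong\Zp$, every subfield of $K_{\infty}$ that contains $K_n$ and is finite over $K_n$ is one of the $K_m$ with $m\geq n$. As $L_{S_n}/K_n$ is finite, the intersection $L_{S_n}\cap K_{\infty}$ equals $K_m$ for some $m\geq n$, and disjointness is precisely the statement $m=n$. Thus it suffices to rule out $K_{n+1}\subseteq L_{S_n}$.

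Suppose for contradiction that $K_{n+1}\subseteq L_{S_n}$. Because $L_{S_n}/K_n$ is Galois (as noted before the statement) and $K_{n+1}/K_n$ is cyclic of degree $p$, restriction yields a surjection $f\colon \Gal(L_{S_n}/K_n)\twoheadrightarrow\Gal(K_{n+1}/K_n)\cong\cy{p}$. Write $G=\Gal(L_{S_n}/K_n)$ and $H=\Gal(L_{S_n}/L_n)$, so that $G/H=\cG_n\cong GL_2(\Fp)$ by Lemma \ref{GL2_lemma}. The nondegenerate pairing (\ref{pairing}) embeds $H$ into $\Hom(S_n,E[p])$ via $\rho\mapsto(s\mapsto[s,\rho])$; in particular $H$ is an elementary abelian $p$-group. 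The crucial step is to identify the conjugation action of $\cG_n$ on $H$: by Proposition \ref{res_isom_prop} each $s\in S_n$ corresponds to a $\cG_n$-equivariant homomorphism, which translates into the identity $[s,\tilde g\rho\tilde g^{-1}]=g\cdot[s,\rho]$ for $g\in\cG_n$ (with any lift $\tilde g\in G$) and $\rho\in H$, where $\cG_n$ acts on $E[p]$ through the standard representation. Hence the embedding $H\hookrightarrow\Hom(S_n,E[p])$ is $\cG_n$-equivariant for the action on the target only, so $\Hom(S_n,E[p])\cong E[p]^{\oplus\dim_{\Fp}S_n}$ as $\cG_n$-modules and $H$ is a $\cG_n$-submodule of a direct sum of copies of $E[p]$.

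To finish, I would use the representation theory of $GL_2(\Fp)$. Since $E[p]\cong\Fp^2$ is an irreducible $GL_2(\Fp)$-module not isomorphic to the trivial module, $H$ is semisimple and isotypic of type $E[p]$, so Schur's lemma gives $\Hom_{\cG_n}(H,\Fp)=0$. On the other hand, because $f$ has abelian target we have $f(\tilde g\rho\tilde g^{-1})=f(\rho)$, so $f|_H$ is $\cG_n$-invariant, i.e. $f|_H\in\Hom_{\cG_n}(H,\Fp)=0$. Therefore $f$ factors through $G/H=GL_2(\Fp)$, giving a surjection $GL_2(\Fp)\twoheadrightarrow\cy{p}$; but $GL_2(\Fp)^{\mathrm{ab}}\cong\Fp^{\times}$ has order prime to $p$ (for $p\geq 5$, since $SL_2(\Fp)$ is perfect — the same input as in Lemma \ref{GL2_lemma}), so no such surjection exists. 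This contradiction shows $K_{n+1}\not\subseteq L_{S_n}$, completing the proof.

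The main obstacle is the middle paragraph: correctly pinning down the $\cG_n$-module structure on $H=\Gal(L_{S_n}/L_n)$, namely that the conjugation action agrees with the action on the \emph{target} $E[p]$ of $\Hom(S_n,E[p])$, which rests on the equivariance built into Proposition \ref{res_isom_prop}. Once the module structure is in place, the conclusion is a routine ``$GL_2(\Fp)$ has no $p$-power quotient'' argument.
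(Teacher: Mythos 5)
Your proof is correct and uses the same two ingredients as the paper: the identification of $\Gal(L_{S_n}/L_n)$ with the semisimple $\cG_n$-module $E[p]^{\dim_{\Fp}S_n}$ (which, being isotypic of type the irreducible module $E[p]$, admits no quotient of order $p$ with trivial action), and the fact that $GL_2(\Fp)$ has no quotient of order $p$. The only difference is organizational — the paper first intersects $K_\infty$ with $L_n$ via Lemma 2.3 and then handles $L_{S_n}/L_n$, whereas you kill the putative map to $\Gal(K_{n+1}/K_n)$ on the normal subgroup $\Gal(L_{S_n}/L_n)$ and then on the quotient $\cG_n$ — so this is essentially the paper's argument.
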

\begin{proof}

By lemma \ref{GL2_lemma} the extensions $L_n/K_n$ and $K_{\infty}/K_n$ are disjoint. Therefore we have that $\Gal(L_nK_{\infty}/L_n)\cong\Gal(K_{\infty}/K_n)$. We now show that $L_nK_{\infty}/L_n$ and $L_{S_n}/L_n$ are disjoint. If they were not disjoint then $\Gal(L_{S_n}/L_n)$ would a have a quotient of order $p$ on which $\cG_n$ acts trivially. But $\Gal(L_{S_n}/L_n) \cong E[p]^r$ where $r=\dim_{\Fp}S_n$ is a semisimple $\cG_n$-module (see \cite{Gross} prop. 9.3). Therefore any quotient of $\Gal(L_{S_n}/L_n)$ is isomorphic to $E[p]^s$ for some $s \le r$ (\cite{Bourbaki} cor. 4.3) so $L_{S_n}/L_n$ and $L_nK_{\infty}/L_n$ are indeed disjoint which completes the proof.
\end{proof}

We now assume that for some $n_0$ we have a finite subgroup $S_{n_0} \subset H^1(K_{n_0}, E[p])$ that is stable under $\Gal(K_{n_0}/\Q)$. Then $L_{S_{n_0}}/\Q$ is a finite Galois extension. Let $V=\Gal(L_{S_{n_0}}/L_{n_0})$. Given a subset $U$ of $V$ we define

$$\mathscr{L}(U)=\{\ell \; \text{rational prime}\; | \;\ell \nmid pN\; \text{and} \; \Frob_{\ell}(L_{S_{n_0}}/\Q)=[\tau u] \; \text{for} \; u \in U \}$$

Note that every $\ell \in \mathscr{L}(U)$ is a Kolyvagin prime.

\begin{proposition}\label{generating_Xp_proposition}
If $U^+$ generates $V^+$, then $\img \uppsi_{\ell}$ with $\ell$ ranging over $\mathscr{L}(U)$ generate $X_p(E/K_{\infty})^{\; \dual}$
\end{proposition}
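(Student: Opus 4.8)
The plan is to dualise. Recall that $\uppsi_\ell$ is by definition the Pontryagin dual of the localisation map $\res_\ell\colon X_p(E/K_\infty)\to\ilim E(K_{n,\ell})/p$, transported through the identification \eqref{Tatelocalduality_isomorphism}. Since $X_p(E/K_\infty)$ is a compact $\overbar\Lambda$-module and $X_p(E/K_\infty)^{\dual}$ is discrete, under the perfect pairing between them the annihilator of $\sum_\ell\img\uppsi_\ell$ equals $\bigcap_\ell\ker\res_\ell\subseteq X_p(E/K_\infty)$, where $\ell$ runs over $\mathscr{L}(U)$. Hence the images $\img\uppsi_\ell$ generate $X_p(E/K_\infty)^{\dual}$ if and only if
\[
\bigcap_{\ell\in\mathscr{L}(U)}\ker\Bigl(\res_\ell\colon X_p(E/K_\infty)\to\ilim E(K_{n,\ell})/p\Bigr)=\{0\}.
\]
So the whole statement reduces to showing that a nonzero $s\in X_p(E/K_\infty)$ cannot be annihilated by $\res_\ell$ for every $\ell\in\mathscr{L}(U)$.

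First I would write $s=(s_n)_n$ with $s_n\in\Sel(E/K_n)\subset H^1(K_n,E[p])$ and, assuming $s\neq0$, fix $N\ge n_0$ with $s_N\neq0$. Because $\res_\ell(s)$ has $N$-th component $\res_\ell(s_N)$, it is enough to produce a single $\ell\in\mathscr{L}(U)$ with $\res_\ell(s_N)\neq0$. Next I would recall the local picture at a Kolyvagin prime: $s_N$ is unramified at $\ell$, so $\res_\ell(s_N)$ lies in the finite part $E(K_{N,\ell})/p=\oplus_{\lambda\mid\ell}E[p]/(\Frob_\lambda-1)$, and under Proposition \ref{res_isom_prop} the component at $\lambda$ is the class of $s_N(\Frob_\lambda)$, where $s_N$ is viewed as a $\cG_N$-homomorphism $\Gal(\overline\Q/L_N)\to E[p]$. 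Since $\ell$ is inert in $K$ and $\Frob_\ell$ is conjugate to $\tau$, the element $\Frob_\lambda$ acts on $E[p]$ as complex conjugation, so $E[p]/(\Frob_\lambda-1)$ is the one-dimensional $(\tau=+1)$-quotient; this is the origin of the superscript ``$+$'' in the hypothesis. Detecting $s_N$ thus amounts to finding a Frobenius over $L_N$, compatible with the constraint $\Frob_\ell|_{L_{S_{n_0}}}=\tau u$ for some $u\in U$, on which the $+$-part of $s_N$ is nonzero.

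The core of the argument is then a Chebotarev density computation in the compositum $L_{S_{n_0}}\cdot L_{S'_N}$, where $S'_N$ is the $\Gal(K_N/\Q)$-stable subgroup of $H^1(K_N,E[p])$ generated by $s_N$. I would prescribe the Frobenius of $\ell$ to equal $\tau u$ on $L_{S_{n_0}}$ and, simultaneously, to detect $s_N$ on $L_{S'_N}$, the two prescriptions having to agree on the overlap $L_{S_{n_0}}\cap L_{S'_N}$. Here the hypothesis that $U^+$ generates $V^+$ is precisely what guarantees such a compatible choice exists: by nondegeneracy of the pairing \eqref{pairing} on $S_{n_0}\times V$, together with the semisimplicity of $\Gal(L_{S'_N}/L_N)\cong E[p]^r$ as a $\cG_N$-module (as in the preceding lemma), the $+$-parts of the admissible Frobenii $\tau u$ ($u\in U$) already span the directions in $V^+$ needed to see the nonzero $+$-eigencomponent of $s_N$. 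Chebotarev then yields infinitely many $\ell\in\mathscr{L}(U)$ with $\res_\ell(s_N)\neq0$, contradicting $s\in\bigcap_\ell\ker\res_\ell$.

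I expect the genuine difficulty to be the joint Frobenius prescription in the last step: controlling the intersection $L_{S_{n_0}}\cap L_{S'_N}$ and checking that the $\tau$-twisted constraint coming from $\mathscr{L}(U)$ is consistent with a Frobenius detecting $s_N$. This is exactly the point at which ``$U^+$ generates $V^+$'' must be converted, via the $\tau$-equivariance of the pairing and the fact that inert Kolyvagin primes test only the $(\tau=+1)$-eigenspace, into a statement about which Frobenius classes are simultaneously achievable; the eigenspace bookkeeping and the semisimplicity input carry the weight here, whereas the duality reduction and the local computation are essentially formal.
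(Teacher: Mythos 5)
Your opening reduction (dualise, and show $\bigcap_{\ell\in\mathscr{L}(U)}\ker\res_{\ell}=\{0\}$ on $X_p(E/K_{\infty})$) and the idea of applying Chebotarev in a compositum containing $L_{S_{n_0}}$ and a field through which $s_N$ factors are exactly the paper's strategy. But the proposal has a genuine gap at its core. For a Kolyvagin prime with $\Frob_{\ell}(M/\Q)=[\tau x]$, the Frobenius of a prime $\lambda$ of $L_N$ above $\ell$ is $(\tau x)^2=x^{\tau}x=(x^+)^2$, so $\res_{\ell}$ only ever evaluates $s_N$ on the $+$-part of $\Gal(M/L_N)$. Hence the most the hypothesis ``$\res_{\ell}(s_N)=0$ for all $\ell\in\mathscr{L}(U)$'' can yield is that $s_N$ vanishes on $\Gal(M/L_N)^+$; conversely, if a nonzero $s_N$ happened to vanish on all of $\Gal(M/L_N)^+$, then no admissible $\ell$ could detect it, so your concluding claim that ``Chebotarev then yields infinitely many $\ell$ with $\res_{\ell}(s_N)\neq 0$'' does not follow from $s_N\neq 0$ alone. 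The paper closes precisely this gap: it first reduces to $s_N$ lying in a $\tau$-eigenspace, and then observes that vanishing on $\Gal(M/L_N)^+$ forces the image of $s_N$, viewed as a $\cG_N$-homomorphism into $E[p]$, to lie in a single $\tau$-eigenspace of $E[p]$, which is a proper $\cG_N$-submodule and hence zero because $\cG_N\cong GL_2(\Fp)$ acts irreducibly. This irreducibility step is the heart of the proof and is absent from your proposal.

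Two further points. Your local analysis is off: since $\Frob_{\ell}(K(E[p])/\Q)=[\tau]$ has order two, $\lambda$ splits completely in $K(E[p])/K$, so $\Frob_{\lambda}$ acts trivially on $E[p]$ and $E(K_{N,\lambda})/p\cong E[p]$ is two-dimensional (consistent with Proposition \ref{Iwasawa_rank_proposition}), not the one-dimensional $(\tau=+1)$-quotient; the superscript $+$ in the hypothesis comes from the identity $\Frob_{\lambda}(M/L_N)=(x^+)^2$, not from an eigenspace of the local points. Finally, the ``joint Frobenius prescription'' you single out as the main difficulty is not needed: the paper argues in the vanishing direction, taking an arbitrary $x\in\Gal(M/L_N)$ restricting into $U_N$ (with $M/L_N$ abelian and large enough that $s_N$ factors through it), producing $\ell$ by Chebotarev, deducing $s_N(x^+)=0$, and then letting $x$ vary; that $U_N^+$ generates $V_N^+$ (which uses the disjointness lemma to identify $V_N$ with $V$) kills $s_N$ on all of $\Gal(M/L_N)^+$ with no compatibility condition on an overlap to verify.
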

\begin{proof}
Let $s=(s_n) \in X_p(E/K_{\infty})$ with $s_n \in \Sel(E/K_n)$. To prove the proposition, it suffices to show that $\res_{\ell}(s)=0$ for all $\ell \in \mathscr{L}(U)$ implies  $s=0$ i.e. we must show for any $n$ that $\res_{\ell}(s_n)=0$ implies that $s_n=0$. Of course it suffices to show this for all $n \geq n_0$

Let $n \geq n_0$. By the previous lemma the extensions $L_{S_{n_0}}/K_{n_0}$ and $K_n/K_{n_0}$ are disjoint. Therefore, $V_n:=\Gal(L_{S_{n_0}}K_n/L_n)$ may be identified via restriction with $V=\Gal(L_{S_{n_0}}/L_{n_0})$. Let $U_n$ be the subset of $V_n$ corresponding to $U$. Then $U_n^+$ generates $V_n^+$. Moreover, if $\ell \in \mathscr{L}(U)$ with $\Frob_{\ell}(L_{S_{n_0}}/\Q)=[\tau u]$ with $u \in U$, then as $\ell$ is inert in $K/\Q$ and $\ell \neq p$ therefore $\ell$ splits completely in the anticyclotomic $\Zp$-extension $K_{\infty}/K$. This implies that $\Frob_{\ell}(L_{S_{n_0}}K_n/\Q)=[\tau u']$ where $u'$ is the element of $U_n$ corresponding to $u$.

We are now ready to prove the result. As above, we will show that $\res_{\ell}(s_n)=0$ implies that $s_n=0$. Without loss of generality we may assume that $s_n$ is in an eigenspace for the action of $\tau$

By proposition \ref{res_isom_prop} the restriction map induces an isomorphism
$$\res: H^1(K_n, E[p]) \isomarrow H^1(L_n, E[p])^{\cG_n} =\Hom_{\cG_n}(\Gal(\overbar{\Q}/L_n), E[p])$$

Using this isomorphism we identify $s_n$ with its image in $\Hom_{\cG_n}(\Gal(\overbar{\Q}/L_n), E[p])$

Now choose a Galois extension $M$ of $\Q$ containing $L_{S_{n_0}}K_n$ such that: (i) $\Gal(M/L_n)$ is abelian and (ii) $s_n$ factors through $\Gal(M/L_n)$. Let $x \in \Gal(M/L_n)$ be such that $x|_{L_{S_{n_0}}K_n} \in U_n$. By the Chebotarev density theorem, we may find $\ell \in \mathscr{L}(U)$ such that $\Frob_{\ell}(M/\Q)=[\tau x]$

That fact that $\res_{\ell}(s_n)=0$ means that $s_n(\Frob_{\lambda}(M/L_n))=0$ for all primes $\lambda$ of $L_n$ above $\ell$. Since $\Frob_{\ell}(M/\Q)=[\tau x]$ therefore for any prime $\lambda$ of $L_n$ above $\ell$ we have $\Frob_{\lambda}(M/L_n)=(\tau x)^2=x^{\tau}x=(x^+)^2$ and hence $s_n(x^+)=0$.

Since $U_n^+$ generates $V_n^+$ therefore the homomorphism vanishes on $\Gal(M/L_n)^+$ and hence as $s_n$ is in an eigenspace for the action of $\tau$ this implies that the image of $s_n$ is contained in a $\tau$-eigenspace of $E[p]$. In particular, it is a proper $\cG_n$-submodule of $E[p]$. Hence it is trivial since $\cG_n=GL_2(\Fp)$. Therefore $s_n=0$
\end{proof}

The following proposition will be an important tool to finding relations in $X_p(E/K_{\infty})^{\dual}$

\begin{proposition}\label{global_duality_proposition}

For any $n$, if $s \in \Sel(E/K_n)$ and $\gamma \in H^1(K_n, E)[p]$, then

$$\sum_{\ell} \langle \res_{\ell} s, \; \res_{\ell} \gamma \rangle_{\ell} =0$$

where the sum is taken over all the rational primes

\end{proposition}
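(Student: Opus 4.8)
The plan is to deduce this identity from the global reciprocity law of class field theory applied to the cup product induced by the Weil pairing on $E[p]$, which is precisely the source of the local pairings $\langle\,,\,\rangle_{\ell}$.

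First I would lift $\gamma$ to a global class with coefficients in $E[p]$. The Kummer sequence $0 \to E[p] \to E \xrightarrow{p} E \to 0$ over $K_n$ yields the exact sequence
$$0 \longrightarrow E(K_n)/p \longrightarrow H^1(K_n, E[p]) \longrightarrow H^1(K_n, E)[p] \longrightarrow 0,$$
so the natural map $H^1(K_n, E[p]) \to H^1(K_n, E)[p]$ is surjective and I may choose $\tilde{\gamma} \in H^1(K_n, E[p])$ mapping to $\gamma$. The Weil pairing $E[p] \otimes E[p] \to \mu_p$ then furnishes a global cup product
$$H^1(K_n, E[p]) \times H^1(K_n, E[p]) \xrightarrow{\;\cup\;} H^2(K_n, \mu_p) = \mathrm{Br}(K_n)[p],$$
together with its local analogues $H^1(K_{n,v}, E[p]) \times H^1(K_{n,v}, E[p]) \to H^2(K_{n,v}, \mu_p)$, where for each place $v$ of $K_n$ the invariant map $\mathrm{inv}_v$ identifies the target with $\Fp$.

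Next I would invoke the fundamental exact sequence of global class field theory: the sum of the local invariants of a global Brauer class vanishes. Applying this to $s \cup \tilde{\gamma} \in H^2(K_n, \mu_p)$ and using functoriality of the cup product under restriction, I obtain
$$\sum_v \mathrm{inv}_v\big(\res_v s \,\cup\, \res_v \tilde{\gamma}\big) = 0,$$
the sum ranging over all places $v$ of $K_n$. The remaining, and most delicate, step is to identify each local term with $\langle \res_v s, \res_v \gamma\rangle_v$. For this I would use two facts: (a) under the local cup product the image of $E(K_{n,v})/p$ in $H^1(K_{n,v}, E[p])$ is maximal isotropic, equal to its own orthogonal complement, so the pairing descends to the local Tate duality pairing $E(K_{n,v})/p \times H^1(K_{n,v},E)[p] \to \Fp$ that defines $\langle\,,\,\rangle_v$; and (b) since $s \in \Sel(E/K_n)$, its localization $\res_v s$ lies in $E(K_{n,v})/p$ for every $v$. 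Together these show that $\mathrm{inv}_v(\res_v s \cup \res_v \tilde{\gamma})$ depends on $\tilde{\gamma}$ only through its image $\res_v \gamma \in H^1(K_{n,v}, E)[p]$ and equals $\langle \res_v s, \res_v \gamma\rangle_v$; in particular the total sum is independent of the chosen lift.

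Finally, grouping the places $v$ of $K_n$ according to the rational prime $\ell$ below them, I have $\langle \res_{\ell} s, \res_{\ell} \gamma\rangle_{\ell} = \sum_{\lambda \mid \ell}\langle \res_{\lambda} s, \res_{\lambda} \gamma\rangle_{\lambda}$, while the archimedean places contribute nothing because $K_n$ is totally imaginary, so $H^1(K_{n,v}, -) = 0$ there. This gives $\sum_{\ell} \langle \res_{\ell} s, \res_{\ell} \gamma\rangle_{\ell} = 0$, as claimed. I expect the main obstacle to be assembling the compatibility in (a)–(b) between the cup product and the pairing $\langle\,,\,\rangle_v$ precisely; but this is exactly the content of local Tate duality already cited for the construction of $\langle\,,\,\rangle_{\ell}$, so it can be invoked rather than reproved.
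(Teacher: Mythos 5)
Your proposal is correct and follows essentially the same route as the paper, which simply cites the global reciprocity law for the Brauer group of $K_n$ together with the definition of the local Tate duality pairings; you have merely spelled out the standard details (lifting $\gamma$ through the Kummer sequence, cupping via the Weil pairing, isotropy of the local Kummer images, and the vanishing of archimedean contributions) that the paper leaves implicit.
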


The proposition is an immediate consequence of the global reciprocity law for elements in the Brauer group of $K_n$ (\cite{NSW} th. 8.1.17), taking into account the definition of local Tate duality (\emph{loc. cit.} th. 7.2.6)

We now define $Y_{p^{\infty}}(E/K_{\infty})=\ilim T_p\Selinf(E/K_{\infty})^{\Gamma_n}$ where the inverse limit is taken over $n$ with respect to the norm maps

The restriction maps $\res: \Selinf(E/K_n) \to \Selinf(E/K_{\infty})^{\Gamma_n}$ induce a map

$$\Xi: X_{p^{\infty}}(E/K_{\infty}) \to Y_{p^{\infty}}(E/K_{\infty})$$\\
In the final part of this section, we will prove an Iwasawa-theoretic control theorem which determines the $\Lambda$-rank of the kernel and cokernel of this restriction map. As explained in the introduction, this control theorem will allow us to deduce the value of $\Lambda$-corank of $\Selinf(E/K_{\infty})$ from the value of the $\Lambda$-rank of $X_{p^{\infty}}(E/K_{\infty})$

\begin{theorem}
Consider the map $\Xi$ induced by restriction

$$\Xi: X_{p^{\infty}}(E/K_{\infty}) \to Y_{p^{\infty}}(E/K_{\infty})$$\\

\begin{enumerate}[(a)]
\item If $E$ has good ordinary reduction at $p$, then $\Xi$ is an isomorphism\\
\item If $E$ has good supersingular reduction at $p$ and $p$ splits in $K/\Q$, then $\Xi$ is an injection and $\rank_{\Lambda}(\coker \Xi) \le 2$
\end{enumerate}

\end{theorem}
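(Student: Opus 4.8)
\emph{Strategy.} I would prove a finite-level control statement for the restriction maps $s_n \colon \Selinf(E/K_n) \to \Selinf(E/K_{\infty})^{\Gamma_n}$ and then pass to the inverse limit defining $\Xi$. The idea is to compare the two Selmer groups through the usual Mazur-type diagram, use the snake lemma to reduce the control of $\ker s_n$ and $\coker s_n$ to local terms, and then analyse those terms at the primes above $p$ according to the reduction type.

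First I would form the commutative diagram with exact top row
$$0 \to \Selinf(E/K_n) \to H^1(K_n, E[p^{\infty}]) \xrightarrow{\lambda_n} P_n \to 0,$$
where $P_n$ is the image of $H^1(K_n,E[p^{\infty}])$ in $\bigoplus_v H^1(K_{n,v},E)[p^{\infty}]$, together with the analogous left-exact bottom row obtained by taking $\Gamma_n$-invariants over $K_{\infty}$; the vertical maps $s_n$, $h_n$, $g_n$ are the restriction maps. By inflation--restriction the kernel and cokernel of the middle map $h_n \colon H^1(K_n,E[p^{\infty}]) \to H^1(K_{\infty},E[p^{\infty}])^{\Gamma_n}$ are governed by $H^1(\Gamma_n, E(K_{\infty})[p^{\infty}])$ and $H^2(\Gamma_n, E(K_{\infty})[p^{\infty}])$, both of which vanish by Corollary \ref{Ep_corollary}; hence $h_n$ is an isomorphism. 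The snake lemma then shows that $s_n$ is injective and identifies $\coker s_n$ with $\ker g_n$, which embeds into the product over the primes $v$ of $K_n$ of the local kernels $H^1(\Gamma_{n,w}, E(K_{\infty,w}))[p^{\infty}]$, where $w$ is a prime of $K_{\infty}$ above $v$ and $\Gamma_{n,w}=\Gal(K_{\infty,w}/K_{n,v})$.

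Next I would compute these local kernels prime by prime. For $v \nmid pN$ the extension $K_{\infty}/K$ is unramified at $v$ and $E$ has good reduction there, so the term vanishes. For $v \mid N$ we have $v \neq p$, and since $v$ splits in $K/\Q$ the local extension is unramified; here the term is finite and bounded independently of $n$. Thus only the primes above $p$ can contribute in the limit. Applying $T_p(-)$ and $\varprojlim_n$ to the finite-level data, injectivity of every $s_n$ together with left-exactness yields injectivity of $\Xi$ in both cases, and $\rank_{\Lambda}(\coker \Xi)$ is bounded by the contribution of the primes above $p$.

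It remains to analyse the primes above $p$, where the two cases diverge and where the real work lies. In the ordinary case the local condition at $p$ is the Greenberg ordinary subgroup, and using the hypotheses $p \nmid E(\Fp)$ and conditions (3)--(4) on $a_p$ one checks that the local term at $p$ is also finite and bounded in $n$; it is therefore annihilated by $T_p$, so $\coker \Xi = 0$ and $\Xi$ is an isomorphism, giving (a). In the supersingular case with $p$ split, write $p\cO_K = \fp \overbar{\fp}$; both primes ramify in $K_{\infty}/K$, and because $a_p = 0$ the local points do not stabilise as they do in the ordinary case. Using the supersingular (Perrin-Riou--Kobayashi type) description of $\varprojlim_n E(K_{n,w})/p$ over the ramified tower, each of $\fp$ and $\overbar{\fp}$ contributes a cokernel term of $\Lambda$-rank at most one, so $\rank_{\Lambda}(\coker \Xi) \le 2$, giving (b). The main obstacle is exactly this supersingular local analysis: unlike the ordinary case there is no elementary finiteness, and one must control the $\Lambda$-module structure of the local cohomology (equivalently the norm groups) at the two ramified primes $\fp, \overbar{\fp}$ in order to pin down the bound $2$ and to ensure that nothing further is contributed in the inverse limit.
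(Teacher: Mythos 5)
Your reduction to a Mazur-type diagram, the vanishing of $H^i(\Gamma_n, E(K_{\infty})[p^{\infty}])$ via Corollary \ref{Ep_corollary}, and the treatment of part (a) all match the paper: at ordinary $p$ the local kernels are finite and die after applying $T_p(-)$ and $\varprojlim_n$, so $\Xi$ is an isomorphism. The injectivity of $\Xi$ in both cases is also fine.

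The gap is in your bound for part (b). After the snake lemma, $\coker\Xi$ is identified with the image of the global cohomology in $\bigoplus_{i=1,2}\varprojlim_n T_pH^1(K_{n,\fp_{n,i}},E)[p^{\infty}]$, and each of these two local modules has $\Lambda$-rank $2$ (its dual is $E(K_{\fp_{\infty,i}})\otimes\Qp/\Zp$, of $\Lambda$-corank $2$ since $\corank_{\Zp}H^1(K_{n,\fp_{n,i}},E[p^{\infty}])=2p^n$ and, by Coates--Greenberg, the local points exhaust the local $H^1$ at a supersingular prime). So a purely local analysis gives only $\rank_{\Lambda}(\coker\Xi)\le 4$. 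Your claim that ``each of $\fp$ and $\overbar{\fp}$ contributes a cokernel term of $\Lambda$-rank at most one'' via a Perrin-Riou--Kobayashi style description of the norm groups is not substantiated and does not reflect where the bound $2$ actually comes from: it is not a prime-by-prime local statement. The paper gets the bound by a global argument --- the Cassels--Poitou--Tate sequence identifies $\img\uppsi$ with $\ker\uptheta$, where the dual of $\uptheta$ is the localization map $\Selinf(E/K_{\infty})\to\bigoplus_{i}E(K_{\fp_{\infty,i}})\otimes\Qp/\Zp$ into a module of total $\Lambda$-corank $4$; one then needs $\corank_{\Lambda}(\img\hat{\uptheta})\ge 2$, which is Ciperiani's theorem that the Heegner points generate a submodule of the local points of $\Lambda$-corank at least $2$ (ultimately resting on Cornut--Vatsal nontriviality results). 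Without this global input --- or some substitute for it --- your sketch does not yield $\le 2$, only $\le 4$. If you want to pursue a local route you would have to explain why the image of the global classes lands in a rank-$\le 2$ submodule, and that is precisely the content of the global duality plus Heegner-point step you are missing.
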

\begin{proof}
First we prove part (a): Assume that $E$ has good ordinary reduction at $p$. From Mazur's control theorem (\cite{Mazur}; see also \cite{Gb_LNM} and \cite{Gb_IIC}) using the fact that $E(K_{\infty})[p^{\infty}]=\{0\}$ (corollary \ref{Ep_corollary}) we get that for any $n$ the restriction map

$$\res_n: \Selinf(E/K_n) \to \Selinf(E/K_{\infty})^{\Gamma_n}$$\\
is an injection with finite cokernel. Part (a) then follows from this by taking Tate modules and then inverse limits over $n$.

Now we prove part (b): Assume that $E$ has good supersingular reduction at $p$ and $p$ splits in $K/\Q$. Define $S=\{p\} \cup \{l \; \text{prime}: l|N\}$. For any $n$, with this set $S$, we define $S_n$ to be the set of primes of $K_n$ above those in $S$ and $S_{\infty}$ to be the primes of $K_{\infty}$ above those in $S$. Now define $K_S$ to be the maximal extension of $K$ unramified outside $S$, $G_S(K_n)=\Gal(K_S/K_n)$ and $G_S(K_{\infty})=\Gal(K_S/K_{\infty})$. Note that since we have assumed all the primes dividing $N$ to split in $K/\Q$ therefore it follows from theorem 2 of \cite{Brink} that the set $S_{\infty}$ is finite.

For any $K_n$ and any $m$ it is well-konown that the $p^m$-Selmer group $\Selm_{p^m}(E/K_n)$ may be defined as

$$\displaystyle 0 \longrightarrow \Selm_{p^m}(E/K_n) \longrightarrow H^1(G_S(K_n), E[p^m])\longrightarrow \prod_{v\in S_n} H^1(K_{n,v} E)[p^m]$$

We may also define $\Selm_{p^m}(E/K_{\infty})$ as

$$\displaystyle 0 \longrightarrow \Selm_{p^m}(E/K_{\infty}) \longrightarrow H^1(G_S(K_{\infty}), E[p^m])\longrightarrow \prod_{v\in S_{\infty}} H^1(K_{\infty,v} E)[p^m]$$

For any $n$ and $m$ consider the following commutative diagram:

\begin{equation}
\xymatrix{
0 \ar[r] & \Selm_{p^m}(E/K_{\infty})^{\Gamma_n} \ar[r] & H^1(G_S(K_{\infty}), E[p^m])^{\Gamma_n} \ar[r]^-{\psi_{{\infty},m}}
& \underset{v \in S_{\infty}}{\bigoplus} H^1(K_{\infty,v}, E)[p^m]^{\Gamma_n} \\
0 \ar[r] & \Selm_{p^m}(E/K_n) \ar[u]_{s_{n,m}} \ar[r] & H^1(G_S(K_n), E[p^m]) \ar[u]_{h_{n,m}} \ar[r]^-{\psi_{n,m}}
& \underset{v \in S_n} {\bigoplus} H^1(K_{n,v}, E)[p^m] \ar[u]_{g_{n,m}}
}
\end{equation}

The vertical maps in the above diagram are restriction. Let us note a few things related to this diagram:\\

(1) The maps $h_{n,m}$ are isomorphisms: This follows from the fact that $H^1(\Gamma_n, E(K_{\infty})[p^m])$ and $H^2(\Gamma_n, E(K_{\infty})[p^m])$ are both trivial because $E(K_{\infty})[p^{\infty}]=\{0\}$ (corollary \ref{Ep_corollary})\\

(2) For any $v \in S_{\infty}$ above $p$ we have $H^1(K_{{\infty},v}, E)[p^{\infty}]=\{0\}$: This is equaivalent to $E(K_{\infty,v}) \otimes \Qp/\Zp \isomarrow H^1(K_{\infty,v}, E[p^{\infty}])$ with the map being the usual one from the Kummer sequence. The result follows from \cite{CG} cor. 3.2 as explained in \cite{Gb_LNM} pg. 70. Note that the fact that $E$ has supersingular reduction at $p$ is crucial for this result\\

(3) For any $v \in S_n$ not dividing $p$ we have that $H^1(K_{n,v}, E)[p^{\infty}]$ is finite: This follows from 2 facts. First, by Tate duality for abelian varieties over local fields (\cite{Milne} cor. 3.4) we have that $H^1(K_{n,v}, E)[p^{\infty}]$ is isomorphic to $\ilim E(K_{n,v})/p^n$. Secondly, if $l$ is the rational prime lying below $v$, then by Mattuck's theorems we have that $E(K_{n,v}) \cong \Z_l^d \times T$ where $d=[K_{n,v}:\Q_l]$ and $T$ is a finite group. Therefore it follows from these 2 facts that $H^1(K_{n,v}, E)[p^{\infty}]$ is the (finite) $p$-primary subgroup of $E(K_{n,v})$\\

(4) Let $\fp_1$ and $\fp_2$ be the primes of $K$ above $p$. Since we have assumed that the class number of $K$ is relatively prime to $p$, therefore both $\fp_1$ and $\fp_2$ are totally ramified in $K_{\infty}/K$. So in particular there are only 2 primes $\fp_{n,1}$ and $\fp_{n,2}$ of $K_n$ above $p$ and 2 primes $\fp_{\infty,1}$ and $\fp_{\infty,2}$ of $K_{\infty}$ above $p$\\

(5) For any $n$ and $m$ we have isomorphisms $H^1(K_n, E[p^m]) \isomarrow H^1(K_n, E[p^{\infty}])[p^m]$ and $\Selm_{p^m}(E/K_n) \isomarrow \Selinf(E/K_n)[p^m]$. A similar observation applies to these groups over $K_{\infty}$: This follows from corollary \ref{Ep_corollary}\\

Let $\tilde{S}_{\infty}=S_{\infty} \backslash \{\fp_{\infty,1}, \fp_{\infty,2}\}$ (see (4)). Taking the points (2)-(5) into consideration, we take the inverse limit of the objects in the diagram above over $m$ (using the multiplication by $p$ map) and then over $n$ (using the corestriction map for the bottom row and the norm map for the top row) to obtain the following diagram

\begin{equation}
\xymatrix{
0 \ar[r] & Y_{p^{\infty}}(E/K_{\infty}) \ar[r] & \ilim T_p H^1(G_S(K_{\infty}), E[p^{\infty}])^{\Gamma_n} \ar[r]^-{\upphi}
& \underset{v \in \tilde{S}_{\infty}}{\bigoplus} \ilim T_p H^1(K_{\infty,v}, E)[p^{\infty}]^{\Gamma_n} \\
0 \ar[r] & X_{p^{\infty}}(E/K_{\infty}) \ar[u]_{\Xi} \ar[r] & \ilim T_p H^1(G_S(K_n), E[p^{\infty}]) \ar[u]_{\Xi'} \ar[r]^-{\uppsi}
& \underset{i=1,2}{\bigoplus} \ilim T_p H^1(K_{\fp_{n,i}}, E)[p^{\infty}] \ar[u]_{\Xi''}
}
\end{equation}

To ease the notation, in the above diagram we have denoted $K_{n,{\fp_{n,i}}}$ by $K_{\fp_{n,i}}$. Applying the snake lemma to this diagram we get

$$0 \to \ker \Xi \to \ker \Xi' \to \ker \Xi'' \cap \img \uppsi \to \coker \Xi \to \coker \Xi'$$\\
From point (1) above, it follows that $\Xi'$ is an isomorphim i.e. $\ker \Xi'=0$ and $\coker \Xi'=0$. Therefore from the above sequence we get that $\ker \Xi=0$ as required. We also get that $\coker \Xi= \ker \Xi'' \cap \, \img \uppsi.$ But $\Xi''$ is the zero map so $\coker \Xi = \img \uppsi$.
Therefore we must show that $\rank_{\Lambda}(\img \uppsi) \le 2$. To study $\img \uppsi$ we use the Cassels-Poitou-Tate exact sequence (see \cite{CS}) which gives that the following sequence is exact

$$H^1(G_S(K_n), E[p^m]) \xrightarrow{\psi_{n,m}} \underset{v \in S_n}{\bigoplus} H^1(K_{n,v}, E)[p^m] \xrightarrow{\theta_{n,m}} \Selm_{p^m}(E/K_n)^{\dual}$$

Taking the points (3)-(5) above into consideration, we take the inverse limits of the groups over $m$ (using the multiplication by $p$ map) and then over $n$ (using the corestriction map). As the all the groups we are dealing with are compact Hausdorff, the resulting sequence is also exact:

$$\ilim T_p H^1(G_S(K_n), E[p^{\infty}]) \xrightarrow{\uppsi} \underset{i=1,2}{\bigoplus} \ilim T_p H^1(K_{\fp_{n,i}}, E)[p^{\infty}] \xrightarrow{\uptheta} \Selinf(E/K_{\infty})^{\dual}$$

The fact that this sequence is exact means that $\img \uppsi = \ker \uptheta$. So to show that $\rank_{\Lambda}(\img \uppsi) \le 2$ it suffices to show that $\rank_{\Lambda}(\ker \uptheta) \le 2$ or equivalently, if $\hat{\uptheta}$ is the dual map, that $\corank_{\Lambda}(\coker \hat{\uptheta}) \le 2$

By Tate local duality the dual of $H^1(K_{\fp_{n,i}}, E)[p^m]$ may be identified with $E(K_{\fp_{n,i}})/p^m$. Therefore using this fact, the map $\hat{\uptheta}$ becomes

$$\hat{\uptheta}: \Selinf(E/K_{\infty}) \to E(K_{\fp_{\infty, 1}}) \otimes \Qp/\Zp \times E(K_{\fp_{\infty, 2}}) \otimes \Qp/\Zp$$\\
This map is the usual map induced by restriction

$$H^1(K_{\infty}, E[p^{\infty}]) \to \underset{i=1,2}{\bigoplus} H^1(K_{\fp_{\infty,i}}, E[p^{\infty}])$$\\
Note that if $c \in \Selinf(E/K_{\infty}) \subset H^1(K_{\infty}, E[p^{\infty}])$ then its image under this map belongs to $E(K_{\fp_{\infty, 1}}) \otimes \Qp/\Zp \times E(K_{\fp_{\infty,2}}) \otimes \Qp/\Zp$

To prove our result we will first calculate $\corank_{\Lambda}(E(K_{\fp_{\infty,i}}) \otimes \Qp/\Zp)$. First we show that $E(K_{\fp{\infty,i}})[p^{\infty}]=\{0\}$. Since $\Gamma=\Gal(K_{\fp_{\infty,i}}/\Qp)$ is pro-$p$ it suffices to show that $E(\Qp)[p^{\infty}]=E(K_{\fp_{\infty,i}})[p^{\infty}]^{\Gamma}=\{0\}$. But since $E$ has supersingular reduction at $p$, we have $E(\Qp)[p^{\infty}]=\hat{E}(p\Zp)[p^{\infty}]$ where $\hat{E}$ is the formal group of $E/\Qp$. The result then follows from the fact (\cite{Silverman} ch. 4 th. 6.1) that $\hat{E}(p\Zp)$ has no $p$-torsion if $p \geq 3$.

Since $E(K_{\fp_{\infty,i}})[p^{\infty}]=\{0\}$, therefore, as in point (1) above, the restriction map induces an isomorphism $H^1(K_{\fp_{n,i}}, E[p^{\infty}]) \isomarrow H^1(K_{\fp_{\infty, i}}, E[p^{\infty}])^{\Gamma_n}$. In addition, we know that (\cite{Gb_IIC} ch.2) $\corank_{\Zp}(H^1(K_{\fp_{n,i}}, E[p^{\infty}]))=2p^n$. Therefore it follows that $\corank_{\Lambda}(H^1(K_{\fp_{\infty,i}}, E[p^{\infty}]))=2$.
But by point (2) above $E(K_{\fp_{\infty,i}}) \otimes \Qp/\Zp$ is isomorphic to $H^1(K_{\fp_{\infty,i}}, E[p^{\infty}])$ so we also have $\corank_{\Lambda}(E(K_{\fp_{\infty,i}}) \otimes \Qp/\Zp)=2$

We have now shown that

$$\corank_{\Lambda}(E(K_{\fp_{\infty, 1}}) \otimes \Qp/\Zp \times E(K_{\fp_{\infty, 2}}) \otimes \Qp/\Zp)=4$$\\
Therefore to show that $\corank_{\Lambda}(\coker \hat{\uptheta}) \le 2$ we only need to show that $\corank_{\Lambda}(\img \hat{\uptheta}) \geq 2$.This follows from a result of Ciperiani (\cite{Ciperiani} prop. 2.1): Consider the subgroup $E(K_{\infty}) \otimes \Qp/\Zp \subset \Selinf(E/K_{\infty})$.
Ciperiani shows that the image of the map (induced by restriction)

$$E(K_{\infty}) \otimes \Qp/\Zp \to E(K_{\fp_{\infty,i}}) \otimes \Qp/\Zp \qquad i=1,2$$\\
has $\Lambda$-corank greater than or equal to two. This implies the result.

\end{proof}

This control theorem implies the following key result

\begin{theorem}\label{Iwasawa_rank_main_theorem}
Both $\Selinf(E/K_{\infty})^{\; \dual}$ and $X_{p^{\infty}}(E/K_{\infty})$ are finitely generated $\Lambda$-modules
\begin{enumerate}[(a)]
\item If $E$ has good ordinary reduction at $p$, then $X_{p^{\infty}}(E/K_{\infty})$ is a free $\Lambda$-module and $\corank_{\Lambda}(\Selinf(E/K_{\infty})) = \rank_{\Lambda}(X_{p^{\infty}}(E/K_{\infty}))$

\item If $E$ has good supersingular reduction at $p$ and $p$ splits in $K/\Q$, then $\corank_{\Lambda}(\Selinf(E/K_{\infty})) \le \rank_{\Lambda}(X_{p^{\infty}}(E/K_{\infty}))+2$
\end{enumerate}
\end{theorem}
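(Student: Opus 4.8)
The plan is to reduce both parts to a single structural statement about $Y_{p^{\infty}}(E/K_{\infty})$ --- namely that it is a free $\Lambda$-module whose rank equals $\corank_{\Lambda}\Selinf(E/K_{\infty})$ --- and then to feed this into the control theorem (the preceding theorem) through the restriction map $\Xi\colon X_{p^{\infty}}(E/K_{\infty}) \to Y_{p^{\infty}}(E/K_{\infty})$. Throughout I write $M=\Selinf(E/K_{\infty})^{\dual}$, so that by definition $\rank_{\Lambda}M=\corank_{\Lambda}\Selinf(E/K_{\infty})$.

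First I would settle the finite generation of $M$ and then carry out the main computation, which identifies $Y_{p^{\infty}}(E/K_{\infty})$ with the $\Lambda$-dual $\Hom_{\Lambda}(M,\Lambda)$. Since $\Selinf(E/K_{\infty})$ embeds in $H^1(G_S(K_{\infty}),E[p^{\infty}])$ (with $S_{\infty}$ finite, as established in the proof of the control theorem), Pontryagin duality exhibits $M$ as a quotient of the finitely generated $\Lambda$-module $H^1(G_S(K_{\infty}),E[p^{\infty}])^{\dual}$, so $M$ is finitely generated since $\Lambda$ is Noetherian. For the identification, set $A=\Selinf(E/K_{\infty})$ and $\Lambda_n=\Zp[\Gamma/\Gamma_n]$. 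Pontryagin duality interchanges invariants and coinvariants, so $(A^{\Gamma_n})^{\dual}=M_{\Gamma_n}=M\otimes_{\Lambda}\Lambda_n$, a finitely generated $\Zp$-module. Using that $T_p(N^{\dual})=\Hom_{\Zp}(N,\Zp)$ for any finitely generated $\Zp$-module $N$, applied to $N=M_{\Gamma_n}$, together with Hom--tensor adjunction and the self-duality $\Hom_{\Zp}(\Lambda_n,\Zp)\cong\Lambda_n$ of the finite group ring, yields a natural isomorphism $T_p(A^{\Gamma_n})\cong\Hom_{\Lambda}(M,\Lambda_n)$. One then checks that the norm maps defining $Y_{p^{\infty}}(E/K_{\infty})$ correspond under this identification to the maps induced by the projections $\Lambda_{n+1}\to\Lambda_n$, so passing to the inverse limit and using $\Lambda=\ilim_n\Lambda_n$ gives $Y_{p^{\infty}}(E/K_{\infty})\cong\Hom_{\Lambda}(M,\Lambda)$.

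Finally I would invoke commutative algebra and then the control theorem. Over the two-dimensional regular local ring $\Lambda$, the dual $\Hom_{\Lambda}(M,\Lambda)$ of a finitely generated module is reflexive, hence satisfies Serre's condition $S_2$ and is maximal Cohen--Macaulay, hence free by Auslander--Buchsbaum; its rank is $\rank_{\Lambda}M=\corank_{\Lambda}\Selinf(E/K_{\infty})$. Thus $Y_{p^{\infty}}(E/K_{\infty})$ is free of that rank. For part (a) the control theorem makes $\Xi$ an isomorphism, so $X_{p^{\infty}}(E/K_{\infty})\cong Y_{p^{\infty}}(E/K_{\infty})$ is free and $\rank_{\Lambda}X_{p^{\infty}}(E/K_{\infty})=\corank_{\Lambda}\Selinf(E/K_{\infty})$. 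For part (b) the control theorem gives an exact sequence $0\to X_{p^{\infty}}(E/K_{\infty})\xrightarrow{\Xi}Y_{p^{\infty}}(E/K_{\infty})\to\coker\Xi\to0$ with $\rank_{\Lambda}\coker\Xi\le 2$, and additivity of $\Lambda$-rank yields $\corank_{\Lambda}\Selinf(E/K_{\infty})=\rank_{\Lambda}X_{p^{\infty}}(E/K_{\infty})+\rank_{\Lambda}\coker\Xi\le\rank_{\Lambda}X_{p^{\infty}}(E/K_{\infty})+2$. In both cases $X_{p^{\infty}}(E/K_{\infty})$ is finitely generated, being isomorphic to (resp.\ a submodule of) the finitely generated module $Y_{p^{\infty}}(E/K_{\infty})$.

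The step I expect to be the main obstacle is the identification $Y_{p^{\infty}}(E/K_{\infty})\cong\Hom_{\Lambda}(M,\Lambda)$, and specifically verifying that the norm maps in the inverse system defining $Y_{p^{\infty}}(E/K_{\infty})$ match, under Pontryagin duality and $\Zp$-duality, the projection maps $\Lambda_{n+1}\to\Lambda_n$ that realize $\Lambda$ as $\ilim_n\Lambda_n$; the self-duality of $\Zp[\Gamma/\Gamma_n]$ sends the one family of maps to the other, and pinning down this compatibility is the delicate bookkeeping. By contrast, the passage from reflexivity to freeness over $\Lambda$ is standard, and once $Y_{p^{\infty}}(E/K_{\infty})$ is understood the control theorem supplies the rest.
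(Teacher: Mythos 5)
Your proposal is correct and follows essentially the same route as the paper: reduce everything to the statement that $Y_{p^{\infty}}(E/K_{\infty})$ is a free $\Lambda$-module of rank equal to $\corank_{\Lambda}\Selinf(E/K_{\infty})$, then feed this into the control theorem via $\Xi$. The only difference is that you prove the two inputs from scratch — your identification $Y_{p^{\infty}}(E/K_{\infty})\cong\Hom_{\Lambda}(\Selinf(E/K_{\infty})^{\dual},\Lambda)$ together with the reflexivity argument is precisely the content of the result the paper simply cites (\cite{NSW} prop.\ 5.5.10), and likewise the finite generation of the Selmer dual is cited from \cite{Manin}.
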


\begin{proof}

$\Selinf(E/K_{\infty})^{\; \dual}$ is a fintely generated $\Lambda$-module by \cite{Manin} th. 4.5. Therefore by \cite{NSW} prop. 5.5.10 we have that $Y_{p^{\infty}}(E/K_{\infty})$ is a finitely generated free $\Lambda$-module with the same rank as $\Selinf(E/K_{\infty})^{\; \dual}$

The control theorem above gives in both the ordinary and supersingular case an injection $X_{p^{\infty}}(E/K_{\infty}) \hookrightarrow Y_{p^{\infty}}(E/K_{\infty})$. Therefore $X_{p^{\infty}}(E/K_{\infty})$ is a finitely generated $\Lambda$-module. The other statements now follow from the control theorem.
\end{proof}

\section{Proof of Theorem A}
In this section we prove theorem A. We assume throughout this section the assumptions for theorem A in section 2.1. Let $\pi: X_0(N) \to E$ be the modular parametrization of section 2.2 and let $\pi_*: J_0(N) \to E$ be the corresponding covariant map. Before beginning the proof of theorem A we note that to prove theorem A we many assume that $\ker(\pi_*)$ is geometrically connected, for $E$ is $\Q$-isogenous to a strong Weil curve $E'$ having a modular parametrization with this property. Since $E$ is isogenous to $E'$, therefore if $E$ satisfies the conditions of theorem A so does $E'$. Moreover let $f: E \to E'$ and $g: E' \to E$ be a $\Q$-isogeny and its dual. Then $f \circ g = m$ and $g \circ f = m$ for some integer $m$. The isogenies $f$ and $g$ induce maps $\bar{g}: \Selinf(E/K_{\infty})^{\dual} \to \Selinf(E'/K_{\infty})^{\dual}$ and $\bar{f}: \Selinf(E'/K_{\infty})^{\dual} \to \Selinf(E/K_{\infty})^{\dual}$ whose composites are multiplication by $m$. From this we get that $\ker \bar{f}$ is annihilated by $m$ and therefore $\corank_{\Lambda}(\Selinf(E'/K_{\infty})) \le \corank_{\Lambda}(\Selinf(E/K_{\infty}))$. We get the reverse inequality from the map $\bar{g}$ and therefore we get equality. Similarly one can show that $\rank_{\Lambda}(X_{p^{\infty}}(E/K_{\infty})) = \rank_{\Lambda}(X_{p^{\infty}}(E'/K_{\infty}))$. This shows that we may (and will) indeed assume that $\ker(\pi_*)$ is geometrically connected.

Recall from section 2.2 that we have $R_n \alpha_n \subset R_{n+1} \alpha_{n+1}$. This allows us to take the direct limit $\dlim R_n \alpha_n \subset E(K_{\infty})/p$. We begin this section with the following important theorem

\begin{theorem}\label{Heegner_module_rank_theorem}
As a $\overbar{\Lambda}$-module $(\dlim R_n \alpha_n)^{\dual}$ is finitely generated and not torsion
\end{theorem}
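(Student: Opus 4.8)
The plan is to study the discrete $\overbar{\Lambda}$-module $M:=\dlim R_n\alpha_n\subseteq E(K_{\infty})/p$ directly and read off both assertions from Pontryagin duality, using that $\overbar{\Lambda}=\Fp[[T]]$ is a discrete valuation ring with uniformiser $T=\gamma-1$. Two general facts organise the argument. First, a finitely generated $\overbar{\Lambda}$-module is torsion if and only if it has finite length, i.e.\ is finite; so once $M^{\dual}$ is known to be finitely generated, it fails to be torsion precisely when $M$ is infinite. Second, by topological Nakayama $M^{\dual}$ (a compact module over the complete local ring $\overbar{\Lambda}$) is finitely generated if and only if $M^{\dual}/TM^{\dual}$ is finite, and dualising $M^{\dual}\xrightarrow{\,T\,}M^{\dual}\to M^{\dual}/TM^{\dual}\to 0$ identifies $(M^{\dual}/TM^{\dual})^{\dual}$ with $M[T]$. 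Thus it suffices to prove that $M[T]$ is finite (for finite generation) and that $M$ is infinite (for non-torsion).

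For the finiteness of $M[T]$ I would exploit the cyclic structure of the pieces. Each $R_n\alpha_n=\overbar{\Lambda}\cdot\alpha_n$ is a cyclic $\overbar{\Lambda}$-module, hence $R_n\alpha_n\cong\overbar{\Lambda}/(T^{a_n})$ with $a_n\le p^n$, and its $T$-torsion (the socle) is at most one-dimensional over $\Fp$. The transition maps $R_n\alpha_n\hookrightarrow R_{n+1}\alpha_{n+1}$ are injective, this being exactly the ordinary-case containment recalled in Section 2.2; writing $\alpha_n=u\,T^{a_{n+1}-a_n}\alpha_{n+1}$ with $u$ a unit, one sees that each transition carries the socle of $R_n\alpha_n$ isomorphically onto the socle of $R_{n+1}\alpha_{n+1}$. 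Since the formation of $T$-torsion commutes with the filtered colimit, $M[T]=\dlim (R_n\alpha_n)[T]\cong\Fp$, which is finite; hence $M^{\dual}$ is finitely generated (in fact cyclic).

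For the infiniteness of $M$ I would appeal to Cornut's theorem. Having reduced at the start of the section to the case where $\ker(\pi_*)$ is geometrically connected, $E$ is itself a strong Weil curve and Cornut's result applies to the points $\alpha_n$. The module $M$ is precisely the $\Fp$-span of $\{\sigma(\alpha_n)\otimes 1 : \sigma\in\Gamma,\ n\ge 0\}$ inside $E(K_{\infty})\otimes\Fp=E(K_{\infty})/p$, which by \cite{Cornut} Theorem B has infinite $\Fp$-dimension. In particular $M$ is infinite, so $M^{\dual}$ is not torsion, and together with the previous paragraph this in fact gives $M^{\dual}\cong\overbar{\Lambda}$.

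The only genuinely deep ingredient is external, namely Cornut's infinite-dimensionality statement, which is what rules out $M$ being finite and hence forces $M^{\dual}$ to be non-torsion. The remaining, more technical point is the finiteness of $M[T]$: the potential obstacle is that the $T$-torsion might accumulate along the direct system, and the argument above shows it does not, the socles being matched up isomorphically because each $R_n\alpha_n$ is cyclic and the transition maps are injective. Everything else is formal module theory over the discrete valuation ring $\overbar{\Lambda}$ together with Pontryagin duality.
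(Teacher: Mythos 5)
Your proposal is correct, and its second half is the paper's argument verbatim: both reduce ``not torsion'' to showing that $\dlim R_n\alpha_n$ is an infinite-dimensional $\Fp$-vector space (finitely generated torsion $\overbar{\Lambda}$-modules being finite) and then invoke Cornut's Theorem B, using the reduction at the start of the section to a geometrically connected $\ker(\pi_*)$. Where you genuinely diverge is the finite-generation half. The paper's proof is soft: $\dlim R_n\alpha_n$ sits inside $\Sel(E/K_{\infty})\cong\Selinf(E/K_{\infty})[p]$, so its dual is a quotient of $\Sel(E/K_{\infty})^{\dual}$, which is finitely generated over $\overbar{\Lambda}$ by Manin's theorem. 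You instead argue intrinsically: each $R_n\alpha_n$ is cyclic over $\overbar{\Lambda}$, hence isomorphic to $\overbar{\Lambda}/(T^{a_n})$; the ordinary-case relation $\mathrm{Tr}_{K_{n+1}/K_n}(\alpha_{n+1})=u\alpha_n$ with $u$ a unit gives $\res(\alpha_n)=u'T^{(p-1)p^n}\alpha_{n+1}$ (since the trace element becomes $T^{p^{n+1}-p^n}$ mod $p$), so the socles of the $R_n\alpha_n$ are carried isomorphically onto one another, $M[T]$ is at most one-dimensional, and topological Nakayama applied to the profinite dual finishes. This checks out, avoids the appeal to Manin, and in fact proves more than the stated theorem: $(\dlim R_n\alpha_n)^{\dual}$ is cyclic, hence free of rank one over $\overbar{\Lambda}$ once Cornut's theorem is in hand. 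What the paper's quotient-of-the-Selmer-group argument buys is uniformity: the identical one-line reasoning is recycled in Proposition \ref{Kolyvagin_classes_rank_proposition} for $\dlim R_nc_n(\ell_1)$, where the classes live only in a generalized Selmer group $\Selm_p^T(E/K_{\infty})$ and where one would have to re-derive the exact transition formula before your socle computation could be run. (Your identification of $\dlim R_n\alpha_n$ with the $\Fp$-span of the $\sigma(\alpha_n)$ is harmless; the paper is slightly more careful in saying Cornut's module is a subgroup of it, which suffices equally well.) Both routes are sound.
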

\begin{proof}
It is well-known (see for example \cite{Manin} th. 4.5) that $\Selinf(E/K_{\infty})^{\dual}$ is a finitely generated $\Lambda$-module. Since $E(K_{\infty})[p^{\infty}]=\{0\}$ by corollary \ref{Ep_corollary} therefore we have an isomorphism $\Sel(E/K_{\infty}) \isomarrow \Selinf(E/K_{\infty})[p]$ and so $\Sel(E/K_{\infty})^{\dual}$ is a finitely generated $\overbar{\Lambda}$-module. The same then holds for $(\dlim R_n \alpha_n)^{\dual}$ (since it is a quotient of $\Sel(E/K_{\infty})^{\dual}$)

We now prove that $(\dlim R_n \alpha_n)^{\dual}$ is not $\overbar{\Lambda}$-torsion. Since finitely generated torsion $\overbar{\Lambda}$-modules are finite, we just have to show that $\dlim R_n \alpha_n$ is an infinite dimensional $\Fp$-vector space. Our result follows from a theorem of Cornut
(\cite{Cornut} th. B). Cornut defines a certain subgroup subgroup $M \subset \dlim R_n \alpha_n$. His theorem states that if $p$ is a prime not dividing $\varphi(Nd_K)$ nor the number of geoemetrically connected components of $\ker(\pi_*)$ then $\dim_{\Fp} M$ is infinite (which then implies that $\dim_{\Fp} \dlim R_n \alpha_n$ is infinite). Since $\ker(\pi_*)$ is geometrically connected we get the desired result.

\end{proof}

We would now like to show that there exists a Kolyvagin prime $\ell_1$ such that $\dlim \res_{\ell_1} R_n \alpha_n$ has nontrivial $\overbar{\Lambda}$-corank. To do this, we use the technique in \cite{CW}.

The above theorem implies that there exists a nonzero map

$$\phi: \overbar{\Lambda}^{\; \dual} \to \dlim R_n \alpha_n$$\\
Since $\phi^{\tau} -\phi$ and $\phi^{\tau} +\phi$ cannot be zero simultaneously, we can assume that $\phi$ lies in one of the eigenspaces for the action of complex multiplication $\tau$. Since $(\img \phi)^{\dual}$ injects into $\overbar{\Lambda}$, it is free of rank 1 over $\overbar{\Lambda}$. It follows that $\dim_{\Fp}(\img \phi)^{\Gamma}=1$. Also since $\img \phi$ is $\tau$-invariant and $\tau g \tau = g^{-1}$ for any $g \in \Gamma$ we get that $(\img \phi)^{\Gamma}$ is also $\tau$-invariant.

Now let $s \in (\img \phi)^{\Gamma} - \{0\}$. Then $s$ is an eigenvector for the action of $\tau$ on $\dlim R_n \alpha_n$. Since $E(K_{\infty})[p^{\infty}]=\{0\}$ (corollary \ref{Ep_corollary}) therefore the restriction map is an isomorphism

$$H^1(K, E[p]) \isomarrow H^1(K_{\infty}, E[p])^{\Gamma}$$\\
This implies that $s \in H^1(K, E[p])$. Now let $T$ be the subgroup generated by $s$ in $H^1(K, E[p])$. With the notation following proposition \ref{res_isom_prop} we have an extension $L_T/\Q$ which is Galois over $\Q$ since $T$ is $\tau$-invariant. Now let $H=\Gal(L_T/L) \cong E[p]$ (see \cite{Gross} prop. 9.3) and choose $h \in H$ such that $(\tau h)^2 \in H^+ - \{0\}$. We now choose an auxilary prime $\ell_1$ such that $\ell_1$ is relatively prime to $pNd_K$ and $\Frob_{\ell_1}(L_T/\Q)=[\tau h]$ (such a prime exists by the Chebotarev density theorem).

We now claim that $\res_{\ell_1} s \neq 0$. To prove this we only have to note that $\ell_1$ is inert in $K/\Q$ and hence $\Frob_{\ell_1}(L_T/K)=[(\tau h)^2]$. Since $(\tau h)^2$ is nonzero, the fact that $\res_{\ell_1} s \neq 0$ follows easily from the non-degeneracy of the pairing (\ref{pairing}). From this we get the following proposition.

\begin{proposition}\label{Kolyvagin_classes_rank_proposition}
As a $\overbar{\Lambda}$-module $(\dlim R_n c_n(\ell_1))^{\dual}$ is finitely generated and not torsion
\end{proposition}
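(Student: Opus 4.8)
The plan is to prove the two assertions separately: that $(\dlim R_n c_n(\ell_1))^{\dual}$ is finitely generated over $\overbar{\Lambda}$, and that it is not torsion. For finite generation I would first observe that every $c_n(\ell_1)$ lies in the Selmer group of $E/K_n$ relaxed at $\ell_1$. Indeed, by property (2) of the Kolyvagin classes the image $d_n(\ell_1) \in H^1(K_n, E)[p]$ is locally trivial at each prime not dividing $\ell_1$, which is exactly the statement that $\res_v(c_n(\ell_1)) \in E(K_{n,v})/p$ for all $v \nmid \ell_1$. Since this relaxed local condition is stable under $G_n=\Gal(K_n/K)$, the whole module $R_n c_n(\ell_1)$ lies in the relaxed Selmer group $\Sel^{\{\ell_1\}}(E/K_n)$, and by property (4) these sit compatibly under restriction, so $\dlim R_n c_n(\ell_1) \subset \Sel^{\{\ell_1\}}(E/K_{\infty})$. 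The dual of the relaxed Selmer group is finitely generated over $\overbar{\Lambda}$: via the localization $\res_{\ell_1}$ it is an extension of $\Sel(E/K_{\infty})^{\dual}$ (finitely generated, as recorded in the proof of Theorem \ref{Heegner_module_rank_theorem}) by a submodule of $(\dlim H^1(K_{n,\ell_1}, E)[p])^{\dual}$, which is free of rank $2$ by Proposition \ref{Iwasawa_rank_proposition}. Being a quotient of a finitely generated $\overbar{\Lambda}$-module, $(\dlim R_n c_n(\ell_1))^{\dual}$ is then finitely generated.

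For the non-torsion claim, since a finitely generated torsion module over the DVR $\overbar{\Lambda}$ is finite, it suffices to show that $\dlim R_n c_n(\ell_1)$ is infinite-dimensional over $\Fp$. Here I would transport the question to the local module at $\ell_1$ using property (3). The composite of the surjection $R_n c_n(\ell_1) \twoheadrightarrow R_n d_n(\ell_1)$ with $\res_{\ell_1}$ is $R_n$-linear and surjects onto $\res_{\ell_1} R_n d_n(\ell_1)$, and the isomorphism $\psi_{n,\ell_1}$ carries $\res_{\ell_1} R_n d_n(\ell_1)$ onto $\res_{\ell_1} R_n\alpha_n$, because $\psi_{n,\ell_1}(\res_{\ell_1} d_n(\ell_1)) = \res_{\ell_1}(c_n(1))$ with $c_n(1)=\phi(\alpha_n)$. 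The compatibility of $\psi_{n,\ell_1}$ with the restriction maps (property (3)) lets these identifications pass to the direct limit, yielding a surjection $\dlim R_n c_n(\ell_1) \twoheadrightarrow \dlim \res_{\ell_1} R_n\alpha_n$. It therefore suffices to show the target is infinite-dimensional, i.e. that $\dlim \res_{\ell_1} R_n\alpha_n$ has nonzero $\overbar{\Lambda}$-corank.

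This last point is where the choice of $\ell_1$ is used. Recall $\img\phi$ is a nonzero $\overbar{\Lambda}$-submodule of $\dlim R_n\alpha_n$ with $(\img\phi)^{\dual}$ free of rank $1$, so $\img\phi \cong \overbar{\Lambda}^{\dual}$; this is the injective hull of $\Fp$ over the DVR $\overbar{\Lambda}$, and its nonzero $\overbar{\Lambda}$-submodules form a chain each containing the one-dimensional socle $(\img\phi)^{\Gamma}=\Fp\, s$. Since $\res_{\ell_1}$ is $\Gamma$-equivariant, $\ker(\res_{\ell_1}|_{\img\phi})$ is a $\overbar{\Lambda}$-submodule; as $\res_{\ell_1} s \neq 0$ it cannot contain the socle, and so it must be zero. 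Hence $\res_{\ell_1}(\img\phi) \cong \img\phi$ has $\overbar{\Lambda}$-corank $1$, and being contained in $\dlim \res_{\ell_1} R_n\alpha_n$ it forces that module to have corank at least one, hence infinite $\Fp$-dimension. Combined with the surjection of the previous paragraph, this shows $\dlim R_n c_n(\ell_1)$ is infinite-dimensional, completing the non-torsion part.

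I expect the main obstacle to be the two final steps taken together: carefully passing property (3) and the $R_n$-generating relations through the direct limit so that the surjection $\dlim R_n c_n(\ell_1) \twoheadrightarrow \dlim \res_{\ell_1} R_n\alpha_n$ is genuinely well-defined and $\overbar{\Lambda}$-linear, and then the structural argument that a single nonzero localization $\res_{\ell_1} s$ forces positive $\overbar{\Lambda}$-corank of the entire localized Heegner module. By contrast, the finite-generation half is comparatively routine once the classes $c_n(\ell_1)$ are identified with elements of the relaxed Selmer group.
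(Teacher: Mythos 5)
Your proof is correct and follows essentially the same route as the paper: embedding $\dlim R_n c_n(\ell_1)$ into the Selmer group relaxed at $\ell_1$ for finite generation, and then surjecting onto $\dlim \res_{\ell_1} R_n \alpha_n$ via properties (2) and (3) of the Kolyvagin classes together with the choice $\res_{\ell_1}s\neq 0$ for the non-torsion claim. The only (harmless) deviations are that the paper obtains cofinite generation of the relaxed Selmer group by citing Manin's theorem directly for arbitrary relaxing sets rather than via the localization sequence and Proposition \ref{Iwasawa_rank_proposition} (where, incidentally, the kernel you describe is a quotient, not a submodule, of $(\dlim H^1(K_{n,\ell_1},E)[p])^{\dual}$ --- which changes nothing), and that your socle argument proves injectivity of $\res_{\ell_1}$ on $\img\phi$ where the paper only needs that the image is not cotorsion.
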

\begin{proof}
To prove that $(\dlim R_n c_n(\ell_1))^{\dual}$ is a finitely generated $\overbar{\Lambda}$-module we cannot argue as in theorem \ref{Heegner_module_rank_theorem} because $\dlim R_n c_n(\ell_1)$ does not belong to $\Sel(E/K_{\infty})$. However it does belong to a "generalized" Selmer group which we will now define.

If $L/\Q$ is an algebraic extension and $T$ is any set of primes of $L$ we define $\Selm_p^T(E/L)$ by the exact sequence

$$\displaystyle 0 \longrightarrow \Selm_p^T(E/L) \longrightarrow H^1(L, E[p])\longrightarrow \prod_{v \notin T} H^1(L_v, E)[p]$$

We also define $\Selm_{p^{\infty}}^T(E/L)$ in a similar fashion. Now let $T$ be the set of primes of $K_{\infty}$ that lie above $\ell_1$. Then by property (2) of the Kolyvagin classes in section 2.2 we have that $\dlim R_n c_n(\ell_1) \subset \Selm_p^T(E/K_{\infty})$. We now modify the argument in theorem \ref{Heegner_module_rank_theorem}: Manin (\cite{Manin} th. 4.5) has shown that $\Selm_{p^{\infty}}^T(E/K_{\infty})^{\dual}$ is a finitely generated $\Lambda$-module (he proves this for any set $T$). Since $E(K_{\infty})[p^{\infty}]=\{0\}$ (by corollary \ref{Ep_corollary}) therefore we have an isomorphism $\Selm_p^T(E/K_{\infty}) \isomarrow \Selm_{p^{\infty}}^T(E/K_{\infty})$ and so $\Selm_p^T(E/K_{\infty})^{\dual}$ is a finitely generated $\overbar{\Lambda}$-module. Then the same holds for $(\dlim R_n c_n(\ell_1))^{\dual}$ (since it is a quotient of $\Selm_p^T(E/K_{\infty})^{\dual}$).

We now prove that $(\dlim R_n c_n(\ell_1))^{\dual}$ is not a torsion $\overbar{\Lambda}$-module. Recall that we have a nonzero map

$$\phi: \overbar{\Lambda}^{\; \dual} \to \dlim R_n \alpha_n$$\\
We have chosen $s \in \img \phi$ and chosen the prime $\ell_1$ so that $\res_{\ell_1} s \neq 0$. Now consider the restriction map

$$\psi: \dlim R_n \alpha_n \to \dlim \res_{\ell_1} R_n \alpha_n$$\\
Since we have $\res_{\ell_1} s \neq 0$ therefore $\img(\psi \circ \phi) \neq 0$ and since $\overbar{\Lambda}^{\; \dual}$ surjects onto $\img(\psi \circ \phi)$ therefore $\img(\psi \circ \phi)^{\dual}$ is a nonzero submodule of $\overbar{\Lambda}$ and hence free of rank 1 over $\overbar{\Lambda}$. It follows that $\dlim \res_{\ell_1} R_n \alpha_n$ is not $\overbar{\Lambda}$-cotorsion

Since $\dlim R_n c_n(\ell_1)$ surjects onto $\dlim R_n d_n(\ell_1)$ and since by property (3) of the Kolyvagin classes we have an isomorphism $\dlim \res_{\ell_1} R_n d_n(\ell_1) \cong \dlim \res_{\ell_1} R_n \alpha_n$ therefore $\dlim R_n c_n(\ell_1)$ surjects onto $\dlim \res_{\ell_1} R_n \alpha_n$. It follows that $\dlim R_n c_n(\ell_1)$ is not $\overbar{\Lambda}$-cotorsion since $\dlim \res_{\ell_1} R_n \alpha_n$ is not $\overbar{\Lambda}$-cotorsion
\end{proof}

The above proposition implies that there exists a nonzero map

$$\phi': \overbar{\Lambda}^{\; \dual} \to \dlim R_n c_n(\ell_1)$$\\
As with the map $\phi$ defined earlier we may assume that $\phi'$ lies in an eigenspace for the action of complex multiplication $\tau$. Now let $s' \in (\img \phi')^{\Gamma} -\{0\}$. Then $s'$ is an eigenvector for the action of $\tau$ on $\dlim R_n c_n(\ell_1)$

We now have the following proposition

\begin{proposition}\label{linear_independence_proposition}
$s$ and $s'$ are linearly independent over $\Fp$
\end{proposition}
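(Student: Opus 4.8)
The plan is to show that $s$ and $s'$ are eigenvectors for the complex conjugation $\tau$ with \emph{opposite} eigenvalues, after which linear independence over $\Fp$ is automatic. First I would observe that both vectors live in the same ambient space $H^1(K, E[p])$: the element $s$ was already identified with an element of $H^1(K, E[p])$ via the restriction isomorphism $H^1(K, E[p]) \isomarrow H^1(K_{\infty}, E[p])^{\Gamma}$ (available by corollary \ref{Ep_corollary}), and exactly the same isomorphism places $s' \in (\img \phi')^{\Gamma} \subset \dlim R_n c_n(\ell_1)$ inside $H^1(K, E[p])$. Since $s$ and $s'$ were each chosen in a $\tau$-eigenspace, it therefore suffices to pin down the two eigenvalues.

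The key input is property (1) of the Kolyvagin classes, which gives $\tau c_n(r) = \epsilon_r g^{i_{n,r}} c_n(r)$ with $\epsilon_r = (-1)^{f_r}\epsilon$. For $s$ we are in the case $r = 1$, so $f_r = 0$ and $\epsilon_1 = \epsilon$; recall that $c_n(1) = \phi(\alpha_n)$ (Kummer image) and that the torsion term $\beta_n$ in $\tau\alpha_n = \epsilon g^{i_{n,1}}\alpha_n + \beta_n$ dies in $E(K_n)/p$, its order being prime to $p$ by corollary \ref{Ep_corollary}. For $s'$ we are in the case $r = \ell_1$, so $f_r = 1$ and $\epsilon_{\ell_1} = -\epsilon$. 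I would then show that upon passing to $\Gamma$-invariants the twist $g^{i_{n,r}}$ is absorbed, leaving $\tau$ acting by the bare scalar $\epsilon_r$. The mechanism is the norm element $N_{G_n} c_n(r) = \sum_{\sigma \in G_n}\sigma\, c_n(r)$: since $\tau\sigma\tau^{-1} = \sigma^{-1}$, we get $\tau\, N_{G_n} c_n(r) = \sum_{\sigma}\sigma^{-1}\tau c_n(r) = \epsilon_r\, N_{G_n}\bigl(g^{i_{n,r}} c_n(r)\bigr) = \epsilon_r\, N_{G_n} c_n(r)$, because $g^{i_{n,r}}$ merely permutes $G_n$. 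Thus the $\Gamma$-invariants of $\dlim R_n \alpha_n$ carry $\tau = \epsilon$ and those of $\dlim R_n c_n(\ell_1)$ carry $\tau = -\epsilon$; this forces the eigenspace of $\phi$ (resp.\ $\phi'$), and hence $\tau s = \epsilon\, s$ and $\tau s' = -\epsilon\, s'$.

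The conclusion is then immediate. Since $p \geq 5$ we have $\epsilon \neq -\epsilon$ in $\Fp$. If $a s + b s' = 0$ with $a, b \in \Fp$, applying $\tau$ gives $a\epsilon s - b\epsilon s' = 0$; multiplying by $\epsilon$ (so $\epsilon^2 = 1$) yields $a s - b s' = 0$, and adding and subtracting the two relations gives $2a s = 0$ and $2b s' = 0$, whence $a = b = 0$ as $s, s' \neq 0$. This proves that $s$ and $s'$ are linearly independent over $\Fp$.

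The main obstacle I anticipate is the claim that the twisting factor $g^{i_{n,r}}$ genuinely drops out on the relevant invariant line, i.e.\ that the generator $s$ of the one-dimensional space $(\img \phi)^{\Gamma}$ is captured by the norm-element computation rather than by some other $g$-fixed combination $P(g)\, c_n(r)$ for which $g^{i_{n,r}}P(g^{-1})$ need not agree with $P(g)$ modulo the annihilator. This is exactly where the possible non-freeness of the cyclic $R_n$-module $R_n c_n(r)$ could intervene. I would resolve this not by analysing arbitrary $g$-fixed vectors, but by using that $\phi$ (resp.\ $\phi'$) is a $\Gamma$-equivariant map lying in a $\tau$-eigenspace: the source $\overbar{\Lambda}^{\; \dual}$ has a one-dimensional $\Gamma$-invariant line on which the involution $g \mapsto g^{-1}$ acts trivially, so its image $s$ inherits the $\tau$-eigenvalue of the target's invariants, which the norm computation shows to be exactly $\epsilon_r$. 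Making this matching precise, together with checking that $\phi$ is nonzero on the invariant line, is the one step that requires care; everything else is formal.
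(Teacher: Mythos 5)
Your strategy---showing $\tau s=\epsilon s$ and $\tau s'=-\epsilon s'$ and deducing independence from the opposite eigenvalues---cannot work, and the paper itself signals this: immediately after this proposition the author splits into Case~1 ($s$ and $s'$ in \emph{different} $\tau$-eigenspaces) and Case~2 ($s$ and $s'$ in the \emph{same} $\tau$-eigenspace), and devotes most of the subsequent construction to Case~2. If your eigenvalue computation were valid, Case~2 would be vacuous. The precise failure is the step you yourself flagged as the ``main obstacle''. The module $R_n c_n(r)$ is a cyclic module over $R_n=\Fp[G_n]\cong\Fp[T]/(T^{p^n})$ with $T=g-1$, hence isomorphic to $\Fp[T]/(T^{k})$ for some $k\le p^n$. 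Its $\Gamma$-invariant line is spanned by $(g-1)^{k-1}c_n(r)$, \emph{not} by the norm element $N_{G_n}c_n(r)$: the norm element equals $(g-1)^{p^n-1}c_n(r)$ and is therefore zero unless $R_n c_n(r)$ is free, so your norm computation sees the generator of the invariant line only in that one case. For the actual generator one computes, using $g^{-1}-1=-g^{-1}(g-1)$ and the fact that $g$ acts trivially on an invariant vector,
$$\tau\bigl((g-1)^{k-1}c_n(r)\bigr)=\epsilon_r(-1)^{k-1}(g-1)^{k-1}c_n(r),$$
so the eigenvalue is $\epsilon_r(-1)^{k-1}$, with a sign governed by the uncontrolled parity of $k$. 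The same applies to $R_n\alpha_n$ with its own length $k'$, and nothing forces $k\equiv k'\pmod 2$; consequently $s$ and $s'$ may perfectly well lie in the same eigenspace. Your fallback---that the eigenvalue is pinned down because $\phi$ lies in a $\tau$-eigenspace of the Hom-space---does not help either: $\phi$ was only arranged to lie in \emph{some} eigenspace (whichever of $\phi\pm\phi^{\tau}$ is nonzero), and the resulting eigenvalue on $s$ is exactly the parity-dependent quantity above, not the bare $\epsilon_r$.

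The paper's proof is of a completely different nature and does not compare eigenvalues at all. It localizes at the auxiliary prime $\ell_1$: using the total ramification of $K[\ell_1]/K[1]$ above $\ell_1$ and a field-disjointness argument it shows $\psi(s)\ne 0$ for the composite restriction $\psi$ into $H^1(K_{n,\ell_1}[\ell_1],E[p])$; then either $\psi(s')=0$ (done), or an Eichler--Shimura/reduction argument shows $\res_{\ell_1}(r\alpha_n)\ne 0$, whence $rd_n(\ell_1)\ne 0$, i.e.\ $s'$ has nonzero image in $H^1(K_n,E)[p]$ while $s$, lying in the image of $E(K_n)/p$, maps to zero there. You would need to adopt an argument of this kind (distinguishing $s$ and $s'$ by a functional that kills one but not the other) rather than an eigenvalue comparison.
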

\begin{proof}
$s \in R_n \alpha_n$ for some $n$ and $s' \in R_m c_m(\ell_1)$ for some $m$. We may assume that $n=m$. To ease the notation we will denote $H^1((K_n[\ell_1])_{\ell_1}, E[p])$ by $H^1(K_{n,\ell_1}[\ell_1], E[p])$

Now let $\psi$ be the composition of the restriction maps

$$\psi: H^1(K_n, E[p]) \to H^1(K_{n, \ell_1}, E[p]) \to H^1(K_{n, \ell_1}[\ell_1], E[p])$$\\
We claim that $\psi(s) \neq 0$. Consider the field $L_T$ defined before proposition \ref{Kolyvagin_classes_rank_proposition} and let $\lambda$ be a prime of $K[\ell_1]L_T$ above $\ell_1$. It is not difficult to see that to show $\psi(s) \neq 0$ we only need to prove that the completions of the extensions $L_T/L$ and $LK[\ell_1]/L$ at $\lambda$ are disjoint. We know that $K[\ell_1]/K[1]$ is totally ramified at any prime at any prime above $\ell_1$. Since $\ell_1$ does not divide $Np$ it is therefore unramified in $L/\Q$. It follows that $LK[\ell_1]/LK[1]$ is totally ramified at primes above $\ell_1$. Since $L_T/L$ is unramified at primes above $\ell_1$ therefore to show that the completions of the extensions $L_T/L$ and $LK[\ell_1]/L$ at $\lambda$ are disjoint we only need to show that the completions of $LK[1]/L$ and $L_T/L$ are disjoint. But this follows from the fact that $\Gal(L_T/L)\cong E[p]$ (see \cite{Gross} prop. 9.3) and that the class number of $K$ is prime to $p$.

If $\psi(s')=0$ then $s$ and $s'$ must be linearly independent. So now we consider the case where $\psi(s') \neq 0$. We have $s'=r c_n(\ell_1)$ for some $r \in R_n$. Since $\psi(s') \neq 0$ therefore from the commutative diagram (\ref{kolyvagin_classes_diagram}) in section 2.2 we get that $D_{\ell_1} (\res_{\ell_1} (\text{Tr}_{K_n[1]/K_n} r \alpha_n(\ell_1))=\res_{\ell_1}(\text{Tr}_{K_n[1]/K_n} D_{\ell_1} r \alpha_n(\ell_1)) \neq 0$ and so $\res_{\ell_1} (\text{Tr}_{K_n[1]/K_n} r \alpha_n(\ell_1)) \neq 0$.(Note that the fields $K_n$ and $K[\ell_1]$ are linearly disjoint over $K$. Therefore $\Gal(K_n[\ell_1]/K)=\Gal(K_n/K) \times \Gal(K[\ell_1]/K)$ and so $r \alpha_n(\ell_1)$ makes sense).

For any prime $\lambda$ of $K_n[\ell_1]$ we will denote the residue field by $\mathbf{K}_{\lambda}$. Now define $E(\mathbf{K}_{\ell_1})/p:=\oplus_{\lambda | \ell_1} \tilde{E}(\mathbf{K}_{\lambda})/p$ where $\tilde{E}$ is the reduced elliptic curve and the sum is taken over all primes $\lambda$ dividing $\ell_1$

With this notation we let $\red_{\ell_1} : E(K_{n,\ell_1}[\ell_1])/p \to E(\mathbf{K}_{\ell_1})/p$ be the reduction map and we let $\overbar{\res}_{\ell_1}: E(K_n[\ell_1])/p \to E(\mathbf{K}_{\ell_1})/p$ be the composition $\overbar{\res}_{\ell_1} =\red_{\ell_1} \circ \res_{\ell_1}$

It follows from proposition \ref{Eichler-Shimura_proposition} that we have

$$\overbar{\res}_{\ell_1}(\text{Tr}_{K_n[1]/K_n} r \alpha_n(\ell_1)) =\Frob_{\ell_1}\overbar{\res}_{\ell_1}(\text{Tr}_{K_n[1]/K_n} r \alpha_n(1)) =\Frob_{\ell_1}\overbar{\res}_{\ell_1}(r \alpha_n)$$\\
Since $\ell_1 \neq p$ therefore multiplication by $p$ is an isomorphism on the formal group of $E(K_{n, \lambda}[\ell_1])$ for any $\lambda$ above $\ell_1$ ($K_{n, \lambda}[\ell_1]$ denotes the completion of $K_n[\ell_1]$ at $\lambda$). From this it follows that the map $\red_{\ell_1}$ is an isomorphism and since we have shown earlier that $\res_{\ell_1}(\text{Tr}_{K_n[1]/K_n} r \alpha_n(\ell_1)) \neq 0$ therefore it follows that $\overbar{\res}_{\ell_1}(\text{Tr}_{K_n[1]/K_n} r \alpha_n(\ell_1)) \neq 0$ and hence from what we deduced above we get $\overbar{\res}_{\ell_1}(r \alpha_n) \neq 0$. We conclude that $\res_{\ell_1}(r \alpha_n) \neq 0$

Using property (3) of the Kolyvagin classes in section 2.2, it follows from this that $\res_{\ell_1}(r d_n(\ell_1)) \neq 0$ and so $r d_n(\ell_1) \neq 0$. Now consider the exact sequence

$$ 0 \rightarrow E(K_n)/p \rightarrow H^1(K_n, E[p]) \xrightarrow{\varphi} H^1(K_n, E)[p] \rightarrow 0$$\\
If $\varphi$ is the above map then $\varphi(s')= r d_n(\ell_1) \neq 0$. But $s \in E(K_n)/p$ so from the above exact sequence we get that $\varphi(s) =0$. This proves that $s$ and $s'$ are linearly independent.
\end{proof}

We are now ready to define the subgroup $S_{n_0} \subset H^1(K_{n_0}, E[p])$ and the set $U$ in section 2.3. We consider 2 cases:\\

\noindent \textit{Case 1.} $s$ and $s'$ lie in different eigenspaces for the action of complex conjugation $\tau$\\

In this case if $\tau s = \epsilon \, s$ where $\epsilon \in  \{+1, -1\}$ then $\tau s' = -\epsilon \, s'$. Note that $s$ and $s'$ are linearly independent over $\Fp$. Since $E(K_{\infty})[p^{\infty}]=\{0\}$ by corollary \ref{Ep_corollary} therefore the restriction map is an isomorphism:

$$H^1(K, E[p]) \isomarrow H^1(K_{\infty}, E[p])^{\Gamma}$$\\
Since $s$ and $s'$ are both $\Gamma$-invariant therefore they belong to $H^1(K, E[p])$. Now let $S$ be the subgroup of $H^1(K, E[p])$ generated by $s$ and $s'$. We now let $n_0=0$ and $S_{n_0}=S$. Let $V=\Gal(L_S/L)$ where $L=K(E[p])$. We will denote $L_{\{\Fp s\}}$ and $L_{\{\Fp s'\}}$ by $L_s$  and $L_{s'}$ respectively. By \cite{Gross} prop. 9.3 we have

$$V=\Gal(L_s/L) \times \Gal(L_{s'}/L) =E[p] \times E[p]$$\\
Complex conjugation $\tau$ acts on $V$ by

$$\tau(x,y)\tau = (\epsilon \, \tau x, -\epsilon \, \tau y)$$\\
Let $E[p]^{\epsilon}$ denote the submodule of $E[p]$ on which $\tau$ acts as $\epsilon$. We now define a subset $U$ of $V$ as

$$U=\{(x,y) \; | \; x \in E[p]^{\epsilon}-\{0\} \ \text{and} \ y \in E[p]^{-\epsilon} - \{0\}\}$$\\
It is clear that $U^+$ generates $V^+$\\

\noindent \textit{Case 2.} $s$ and $s'$ lie in the same eigenspace for the action of complex conjugation $\tau$\\

In this case we have $\tau s = \epsilon \, s$ and $\tau s' = \epsilon \, s'$ for some $\epsilon \in \{ +1, -1\}$. Following \cite{CW} we now consider the $\Gamma$-invariants of $\img \phi/\langle s \rangle$ and $\img \phi'/ \langle s' \rangle$. The map $\phi$ induces a surjection:

$$\overbar{\phi}: \overbar{\Lambda}^{\; \dual} \twoheadrightarrow \img \phi/\langle s \rangle$$\\
Therefore $(\img \phi/ \langle s \rangle)^{\; \dual}$ is a nonzero submodule of $\overbar{\Lambda}$ and so is free of rank 1 over $\overbar{\Lambda}$. It follows that the $\Gamma$-invariants of $\img \phi/\langle s\rangle$ is a 1-dimensional $\Fp$-vector space. Moreover, since $\tau$ acts on $\img \phi/\langle s \rangle$ and $\tau g \tau =g^{-1}$ for any $g \in \Gamma$ it follows that the $\Gamma$-invariants of $\img \phi/\langle s \rangle$ are $\tau$-invariant.

Now choose $e \in \img \phi$ such that $e + \langle s \rangle \in (\img \phi/\langle s \rangle)^{\Gamma} -\{0\}$. Then $e + \langle s \rangle$ is an eigenvector for the action of $\tau$ on $\img \phi/\langle s \rangle$. So $\tau e = \epsilon' e +xs$ for some $\epsilon' \in \{+1, -1\}$ and $x \in \Fp$. As in \cite{CW}, we will now show that $\epsilon'=-\epsilon$. We have

$$e = \epsilon' \tau e + x\tau s = e + \epsilon' x s + \epsilon \, xs = e +(\epsilon' + \epsilon)xs$$\\
Therefore $\epsilon' =-\epsilon$ if $x \neq 0 $. So we still need to consider the case where $\tau e = \epsilon' e$. Here we use the fact that if $g$ is topological generator of $\Gamma$ then $(g-1)e= ys$ for some nonzero $y \in \Fp$. We have

$$ \tau (g-1) e = (g^{-1} - 1)\epsilon' e = -\epsilon'g^{-1}[(g-1)e]= -\epsilon'g^{-1}ys = -\epsilon'ys$$\\
On the other hand $\tau(g-1)e = y\tau s = \epsilon \, y s$ so $\epsilon' = -\epsilon$.

Now consider $e'= e - \epsilon \, \frac{1}{2} x s$. We have

$$\tau e' = \tau e -\epsilon \, \frac{1}{2} x \tau s = -\epsilon \, e + xs - (\epsilon)^2 \frac{1}{2} xs = -\epsilon \, e + \frac{1}{2} xs =
-\epsilon(e -  \epsilon \, \frac{1}{2} xs) = -\epsilon \, e'$$\\
Therefore replacing $e$ with $e'$ we have $\tau e = -\epsilon e$

To summarize, there exists $e \in \img \phi$ such that $e + \langle s \rangle \in (\img \phi/\langle s \rangle)^{\Gamma} -\{0\}$ and $\tau e = -\epsilon \, e$. Similarly there exists $e' \in \img \phi'$ such that $e' + \langle s' \rangle \in (\img \phi'/\langle s' \rangle)^{\Gamma} -\{0\}$ and $\tau e' = -\epsilon \, e'$

We now show that $s, s', e$ and $e'$ are linearly independent over $\Fp$. We will use the fact that $s$ and $s'$ are $\Gamma$-invariant and that $(g-1)e = y s$ and $(g-1)e' = y' s'$ for some nonzero $y, y' \in \Fp$. Now suppose that

$$k_1s + k_2 s' + k_3 e + k_4e' = 0$$\\
For some $k_i \in \Fp$. Multiplying both sides by $g-1$ yields

$$k_3ys + k_4y's' = 0$$\\
and therefore $k_3=k_4=0$ by proposition \ref{linear_independence_proposition}. So we have

$$k_1s +k_2s' = 0$$\\
So using proposition \ref{linear_independence_proposition} again gives $k_1=k_2=0$

Now choose $n \in \N$ such that $s, s', e$ and $e'$ all belong to $H^1(K_n, E[p])$ and let $S$ be the subgroup of $H^1(K_n, E[p])$ generated by $s, s', e$ and $e'$. Then $\dim_{\Fp} S = 4 $. Note that $S$ is stable under $\Gal(K_n/\Q)$. Now let $n_0=n$ and $S_{n_0} = S$. Let $V=\Gal(L_S/L)$ where $L = K_n(E[p])$. We will denote $L_{\{\Fp s\}}$ by $L_s$ and similarly for $s',e$ and $e'$. Since $\dim_{\Fp} S=4$ therefore by \cite{Gross} prop. 9.3 we have

$$V=\Gal(L_s/L) \times \Gal(L_{s'}/L) \times \Gal(L_e/L) \times \Gal(L_{e'}/L) \cong E[p]^4$$\\
Complex conjugation $\tau$ acts on $V$ by

$$\tau(x,y,z,w)\tau = (\epsilon \, \tau x, \epsilon \, \tau y, -\epsilon \, \tau z, -\epsilon \, \tau w)$$\\
Let $E[p]^{\epsilon}$ denote the submodule of $E[p]$ on which $\tau$ acts as $\epsilon$. We now define

$$U_1=\{(x,0, 0, z) \; | \; x \in E[p]^{\epsilon}-\{0\} \ \text{and} \ z \in E[p]^{-\epsilon}-\{0\} \}$$\\
and
$$U_2=\{(0,y, w, 0) \; | \; y \in E[p]^{\epsilon}-\{0\} \ \text{and} \ w \in E[p]^{-\epsilon}-\{0\} \}$$\\
Finally we let

$$U=U_1 \cup U_2$$\\
It is clear that $U^+$ generates $V^+$

For any $\ell \in \mathscr{L}(U)$ we consider $\dlim \res_{\ell} R_n \alpha_n$ and $\dlim \res_{\ell} R_n c_n(\ell_1)$. We have the following key proposition

\begin{proposition}
For any $\ell \in \mathscr{L}(U)$ the submodules $\dlim \res_{\ell} R_n \alpha_n$ and $\dlim \res_{\ell} R_n c_n(\ell_1)$ of $\dlim E(K_{n,\ell})/p$ each have $\overbar{\Lambda}$-corank greater or equal to 1 and together they generate a submodule of $\overbar{\Lambda}$-corank equal to 2
\end{proposition}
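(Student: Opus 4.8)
The plan is to carry out the entire argument inside the ambient module $A:=\dlim\res_{\ell}R_n\alpha_n+\dlim\res_{\ell}R_nc_n(\ell_1)$'s overmodule $\dlim E(K_{n,\ell})/p$, which for a Kolyvagin prime $\ell$ is a cofree $\overbar{\Lambda}$-module of corank $2$ by Proposition \ref{Iwasawa_rank_proposition} together with the duality (\ref{Tatelocalduality_isomorphism}); write $A:=\dlim E(K_{n,\ell})/p$ from now on. Its $\Gamma$-invariants $A^{\Gamma}$ form a two-dimensional $\Fp$-vector space on which $\tau$ acts (since $\tau\Gamma\tau=\Gamma$), so $A^{\Gamma}=(A^{\Gamma})^{+}\oplus(A^{\Gamma})^{-}$. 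Composing $\phi$ and $\phi'$ with $\res_{\ell}$ gives $\overbar{\Lambda}$-maps $\res_{\ell}\circ\phi,\ \res_{\ell}\circ\phi'\colon\overbar{\Lambda}^{\dual}\to A$ landing in $B:=\dlim\res_{\ell}R_n\alpha_n$ and $B':=\dlim\res_{\ell}R_nc_n(\ell_1)$. Since $\overbar{\Lambda}^{\dual}$ is cofree of corank $1$, any nonzero such map has image of corank exactly $1$; hence once each of the two composites is shown nonzero I get $\corank_{\overbar{\Lambda}}(B)\ge 1$ and $\corank_{\overbar{\Lambda}}(B')\ge 1$, and because $B+B'\subseteq A$ automatically has corank $\le 2$, the whole statement reduces to proving $\corank_{\overbar{\Lambda}}(B+B')\ge 2$.

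To control the sum I would combine the two composites into $\Psi\colon\overbar{\Lambda}^{\dual}\oplus\overbar{\Lambda}^{\dual}\to A$, whose image lies in $B+B'$. After identifying $A\cong(\overbar{\Lambda}^{\dual})^{2}$ and using $\End_{\overbar{\Lambda}}(\overbar{\Lambda}^{\dual})\cong\overbar{\Lambda}$, the map $\res_{\ell}\circ\phi$ is given by a pair $(g_1,g_2)\in\overbar{\Lambda}^{2}$ and $\res_{\ell}\circ\phi'$ by $(h_1,h_2)\in\overbar{\Lambda}^{2}$, and a routine dualization shows $\corank_{\overbar{\Lambda}}(\img\Psi)=2$ exactly when $\det\begin{pmatrix}g_1 & h_1\\ g_2 & h_2\end{pmatrix}=g_1h_2-g_2h_1\neq 0$ in $\overbar{\Lambda}$. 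The mechanism I would exploit is that (after identifying $\overbar{\Lambda}^{\dual}$ with $\img\phi$, so that $s=\phi(\sigma)$ for $\sigma$ a socle generator) the coordinates of $\res_{\ell}s$ are the constant terms $(g_1(0),g_2(0))$, so $\res_{\ell}s$ records the lowest-order coefficient of $(g_1,g_2)$, and likewise $\res_{\ell}s'$ records that of $(h_1,h_2)$; when such a constant term vanishes, the next (linear in $T$) coefficient is recorded by $\res_{\ell}e$ or $\res_{\ell}e'$, using the relations $(g-1)e=ys$ and $(g-1)e'=y's'$ with $y,y'\neq 0$. Thus it suffices to produce, for each $\ell$, one nonzero element of $B$ and one nonzero element of $B'$ lying in opposite $\tau$-eigenspaces of $A^{\Gamma}$: these give two $\Fp$-independent leading vectors and hence a nonvanishing lowest-order term of the determinant.

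The remaining work is the restriction computation through the nondegenerate pairing (\ref{pairing}). Since every $\ell\in\mathscr{L}(U)$ is inert in $K/\Q$ and $\Frob_{\ell}=[\tau u]$ for some $u\in U$, one has $\Frob_{\lambda}=(\tau u)^{2}=u^{\tau}u$ for $\lambda\mid\ell$, and because the relevant $u$ satisfy $u^{\tau}=u$ this equals $2u$; pairing $s,s',e,e'$ against $2u$ reads off which restrictions vanish. In Case $1$, $u=(x,y)$ with $x\in E[p]^{\epsilon}-\{0\}$ and $y\in E[p]^{-\epsilon}-\{0\}$ gives $\res_{\ell}s\neq 0$ in $(A^{\Gamma})^{\epsilon}$ and $\res_{\ell}s'\neq 0$ in $(A^{\Gamma})^{-\epsilon}$, the required opposite-eigenspace pair. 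In Case $2$, for $u\in U_1$ the pairing yields $\res_{\ell}s\neq 0$, $\res_{\ell}s'=0$, and $\res_{\ell}e'\neq 0$; the vanishing of $\res_{\ell}s'$ forces, via $(g-1)\res_{\ell}e'=y'\res_{\ell}s'=0$, that $\res_{\ell}e'$ is $\Gamma$-invariant and lies in $(A^{\Gamma})^{-\epsilon}$, so $\res_{\ell}s\in B$ and $\res_{\ell}e'\in B'$ are the opposite-eigenspace pair, while for $u\in U_2$ the symmetric computation uses $\res_{\ell}s'\in B'$ and $\res_{\ell}e\in B$. In every subcase the two leading vectors sit in complementary $\tau$-eigenspaces, so the lowest-order term of $g_1h_2-g_2h_1$ is nonzero and $\corank_{\overbar{\Lambda}}(B+B')=2$.

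I expect the main obstacle to be the bookkeeping in Case $2$: one must carefully distinguish vanishing of a restriction as a whole element of $A$ from vanishing of its component at a single prime above $\ell$, and then track how the second-order elements $e,e'$ (not $\Gamma$-invariant, with $(g-1)e=ys$) feed into the \emph{linear}-in-$T$ coefficient of the determinant once the constant coefficient has dropped out. Reconciling the eigenspace signs $\tau s=\epsilon s$ and $\tau e=-\epsilon e$ with the pairing values, and verifying that the identified elements genuinely are the lowest-order coefficients of $(g_1,g_2)$ and $(h_1,h_2)$, is the delicate step; cofreeness of $A$ and the relation $(g-1)\res_{\ell}e=y\res_{\ell}s$ are what make it go through.
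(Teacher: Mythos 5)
Your argument is correct and is essentially the paper's own proof: both reduce the corank-two claim to exhibiting two $\Fp$-linearly independent $\Gamma$-invariant elements in the image of the restricted maps (namely $\res_{\ell}s$ together with $\res_{\ell}s'$ in Case 1, or $\res_{\ell}s$ together with $\res_{\ell}e'$ --- made $\Gamma$-invariant by $(g-1)\res_{\ell}e'=y'\res_{\ell}s'=0$ --- in Case 2), with the independence coming from opposite $\tau$-eigenspaces and the same case analysis on $U$, $U_1$, $U_2$. The only cosmetic difference is packaging: you phrase the last step as nonvanishing of the lowest-order coefficient of a $2\times 2$ determinant over $\overbar{\Lambda}$, whereas the paper observes directly that a quotient of $(\overbar{\Lambda}\oplus\overbar{\Lambda})^{\dual}$ whose $\Gamma$-invariants are $2$-dimensional must have $\overbar{\Lambda}$-corank $2$.
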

\begin{proof}
Consider the nonzero maps we defined earlier

$$\phi: \overbar{\Lambda}^{\; \dual} \to \dlim R_n \alpha_n$$

$$\phi': \overbar{\Lambda}^{\; \dual} \to \dlim R_n c_n(\ell_1)$$\\
For any $\ell \in \mathscr{L}(U)$ let

$$\psi_{\ell}: \dlim R_n \alpha_n \to \dlim \res_{\ell} R_n \alpha_n$$

$$\psi'_{\ell}: \dlim R_n c_n(\ell_1) \to \dlim \res_{\ell} R_n c_n(\ell_1)$$\\
be the restriction maps. Then for any $\ell \in \mathscr{L}(U)$ we define $\overbar{\phi}_{\ell} =\psi_{\ell} \circ \phi$ and $\overbar{\phi}'_{\ell}= \psi'_{\ell} \circ \phi'$. Our definition of the set $U$ shows that for any $\ell \in \mathscr{L}(U)$ there exist $\alpha \in \img \phi$ and $\beta \in \img \phi'$ such that $\psi_{\ell}(\alpha) \neq 0$ and $\psi'_{\ell}(\beta) \neq 0$. This shows that $\img \overbar{\phi}_{\ell} \neq 0$ and $\img \overbar{\phi}'_{\ell} \neq 0$. Since $\overbar{\Lambda}^{\; \dual}$ surjects onto $\img \overbar{\phi}_{\ell}$ and $\img \overbar{\phi}'_{\ell}$. Therefore both $\overbar{\phi}^{\; \dual}_{\ell}$ and $\overbar{\phi}'^{\; \dual}_{\ell}$ are nonzero submodules of $\overbar{\Lambda}$ and so are free of rank 1 over $\overbar{\Lambda}$. This shows that both $\dlim \res_{\ell} R_n \alpha_n$ and $\dlim \res_{\ell} R_n c_n(\ell_1)$ are not $\overbar{\Lambda}$-cotorsion

To show that the $\overbar{\Lambda}$-corank of $\dlim \res_{\ell} R_n \alpha_n + \dlim \res_{\ell} R_n c_n(\ell_1)$ is equal to 2, it suffices to prove that $\img \overbar{\phi}_{\ell} + \img \overbar{\phi}'_{\ell}$ has $\overbar{\Lambda}$-corank equal to 2, for then this implies that $\dlim \res_{\ell} R_n \alpha_n + \dlim \res_{\ell} R_n c_n(\ell_1)$ has $\overbar{\Lambda}$-corank greater or equal to 2. But by the isomorphism in property (3) of the Kolyvagin classes in section 2.2 together with proposition \ref{Iwasawa_rank_proposition} we have that the $\overbar{\Lambda}$-corank of $\dlim E(K_{n, \ell})/p$ is equal to 2 so we get equality.

We will show that $\img \overbar{\phi}_{\ell} + \img \overbar{\phi}'_{\ell}$ has $\overbar{\Lambda}$-corank equal to 2. To show this we only need to show that $\dim_{\Fp}(\img \overbar{\phi}_{\ell} + \img \overbar{\phi}'_{\ell})^{\Gamma} = 2$. To see this we note that the maps $\overbar{\phi}_{\ell}$ and $\overbar{\phi}'_{\ell}$ induce a surjection

$$(\overbar{\Lambda}\oplus \overbar{\Lambda})^{\; \dual} \twoheadrightarrow \img \overbar{\phi}_{\ell} + \img \overbar{\phi}'_{\ell}$$\\
Therefore $(\img \overbar{\phi}_{\ell} + \img \overbar{\phi}'_{\ell})^{\; \dual}$ is a nonzero submodule of $\overbar{\Lambda} \oplus \overbar{\Lambda}$ and hence free of rank 1 or 2 over $\overbar{\Lambda}$. Since $\dim_{\Fp}(\img \overbar{\phi}_{\ell} + \img \overbar{\phi}'_{\ell})^{\Gamma} = 2$ therefore we must have that $(\img \overbar{\phi}_{\ell} + \img \overbar{\phi}'_{\ell})^{\; \dual}$ has $\overbar{\Lambda}$-rank equal to 2.

We now show that $\dim_{\Fp}(\img \overbar{\phi}_{\ell} + \img \overbar{\phi}'_{\ell})^{\Gamma} =2$. Recall that we have chosen $s \in (\img \phi)^{\Gamma} - \{0\}$ and $s' \in (\img \phi')^{\Gamma} - \{0\}$. In case 1 $s$ and $s'$ belong to different eigenspaces for the action of complex conjugation $\tau$. Our definition of the set $U$ in this case gives that for any $\ell \in \mathscr{L}(U)$ we have $\res_{\ell}(s) \neq 0$ and $\res_{\ell}(s') \neq 0$. Also since $s$ and $s'$ are $\Gamma$-invariant and belong to different eigenspaces for the action of $\tau$, the same will be true for $\res_{\ell}(s)$ and $\res_{\ell}(s')$. Therefore we get the desired result in this case.

Now we consider case 2 where $s$ and $s'$ belong to the same eigenspace for the action of $\tau$. In this case we have chosen elements $e$ and $e'$ such that $e + \langle s \rangle \in (\img \phi / \langle s \rangle)^{\Gamma}$, $e' + \langle s \rangle \in (\img \phi' / \langle s' \rangle)^{\Gamma}$ and $e$ and $e'$ are eigenvectors for the action of $\tau$ belonging to a different eigenspace from $s$ and $s'$. We have in this case defined our set $U$ to be the union of 2 sets $U_1$ and $U_2$. Suppose that $\ell$ belongs to $\mathscr{L}(U_1)$. The definition of the set $U_1$ shows that $\res_{\ell}(s) \neq 0$, $\res_{\ell}(e') \neq 0$ and $\res_{\ell}(s') =0$. Since $\res_{\ell}(s') =0$ therefore $\res_{\ell}(e')$ is $\Gamma$-invariant. Then we get the desired result because $s$ and $e'$ belong to different eigenspaces for the action of $\tau$. We get a similar situation if $\ell$ belongs to $\mathscr{L}(U_2)$ thereby completing the proof.
\end{proof}

\begin{corollary}\label{Iwasawa_rank_corollary}
For any $\ell \in \mathscr{L}(U)$ the submodules $\dlim \res_{\ell} R_n d_n(\ell)$ and $\dlim \res_{\ell} R_n d_n(\ell \ell_1)$ of $\dlim H^1(K_{n, \ell}, E)[p]$ each have $\overbar{\Lambda}$-corank greater or equal to 1 and together they generate $\dlim H^1(K_{n, \ell}, E)[p]$
\end{corollary}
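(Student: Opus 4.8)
The plan is to transport the preceding proposition across the local isomorphism supplied by property (3) of the Kolyvagin classes in section 2.2, and then to upgrade the resulting corank-$2$ statement to a generation statement using the cofreeness established in proposition \ref{Iwasawa_rank_proposition}.

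First I would invoke property (3): for $\ell \mid r$ there is a $G_n$-equivariant, $\tau$-antiequivariant isomorphism $\psi_{n,\ell}\colon H^1(K_{n,\ell},E)[p] \to E(K_{n,\ell})/p$ satisfying $\psi_{n,\ell}(\res_\ell d_n(r)) = \res_\ell(c_n(r/\ell))$ and commuting with the restriction maps in $n$. Taking $r=\ell$ gives $\psi_{n,\ell}(\res_\ell d_n(\ell)) = \res_\ell(c_n(1)) = \res_\ell(\alpha_n)$, using $c_n(1)=\phi(\alpha_n)$ together with the Kummer identification of $E(K_{n,\ell})/p$ inside $H^1(K_{n,\ell},E[p])$; taking $r=\ell\ell_1$ gives $\psi_{n,\ell}(\res_\ell d_n(\ell\ell_1)) = \res_\ell(c_n(\ell_1))$. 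Since each $\psi_{n,\ell}$ is $R_n$-linear and compatible with restriction, the direct limit $\psi_\ell:=\dlim \psi_{n,\ell}$ is an isomorphism of $\overbar{\Lambda}$-modules $\dlim H^1(K_{n,\ell},E)[p] \isomarrow \dlim E(K_{n,\ell})/p$ which carries $\dlim \res_\ell R_n d_n(\ell)$ onto $\dlim \res_\ell R_n\alpha_n$ and $\dlim \res_\ell R_n d_n(\ell\ell_1)$ onto $\dlim \res_\ell R_n c_n(\ell_1)$.

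The preceding proposition then applies through this isomorphism: each of $\dlim \res_\ell R_n d_n(\ell)$ and $\dlim \res_\ell R_n d_n(\ell\ell_1)$ has $\overbar{\Lambda}$-corank at least $1$, and together they generate a submodule $A$ of $B:=\dlim H^1(K_{n,\ell},E)[p]$ of $\overbar{\Lambda}$-corank exactly $2$. It remains to promote ``corank $2$'' to ``all of $B$''. By proposition \ref{Iwasawa_rank_proposition} the ambient module $B$ is cofree of corank $2$ over the PID $\overbar{\Lambda}$, so $B^{\dual}$ is free of rank $2$. Dualizing the inclusion $A \hookrightarrow B$ yields a surjection $B^{\dual} \twoheadrightarrow A^{\dual}$; since $\corank_{\overbar{\Lambda}} A = 2$ forces $\rank_{\overbar{\Lambda}} A^{\dual} = 2$, the kernel of this surjection has rank $2-2=0$, hence is a torsion submodule of the torsion-free module $B^{\dual}$ and therefore vanishes. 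Thus $A^{\dual} \cong B^{\dual}$, whence $A = B$, i.e.\ the two submodules generate all of $\dlim H^1(K_{n,\ell},E)[p]$.

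The only genuinely delicate point is the passage to the direct limit: one must check that $\psi_\ell$ is well defined and carries the relevant $\overbar{\Lambda}$-submodules to each other. This is exactly what the stated compatibility $\psi_{n+1,\ell}\circ\res_n = \res_n\circ\psi_{n,\ell}$ guarantees, so that $\psi_\ell$ is an isomorphism of $\overbar{\Lambda}$-modules and automatically preserves coranks and generation. Everything after that is formal, the corank-$2$-inside-cofree-corank-$2$ argument being the substance of the final step.
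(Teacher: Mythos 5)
Your argument is correct and follows the paper's own proof essentially verbatim: property (3) of the Kolyvagin classes transports the preceding proposition's corank statements into $\dlim H^1(K_{n,\ell},E)[p]$, and cofreeness of rank $2$ from proposition \ref{Iwasawa_rank_proposition} forces the corank-$2$ submodule to be everything. The paper leaves the last step implicit, whereas you spell out the duality argument (torsion kernel inside a free module over the PID $\overbar{\Lambda}$ vanishes), but this is the same reasoning, not a different route.
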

\begin{proof}
Using property (3) of the Kolyvagin classes in section 2.2, it follows from the previous proposition that both $\dlim \res_{\ell} R_n d_n(\ell)$ and $\dlim \res_{\ell} R_n d_n(\ell \ell_1)$ have $\overbar{\Lambda}$-corank greater or equal to 1 and that together they generate a submodule of $\dlim H^1(K_{n, \ell}, E)[p]$ of $\overbar{\Lambda}$-corank equal to 2. This submodule must be equal to $\dlim H^1(K_{n, \ell}, E)[p]$ since by proposition \ref{Iwasawa_rank_proposition} $H^1(K_{n, \ell}, E)[p]$ is a cofree $\overbar{\Lambda}$-module of rank 2.
\end{proof}

\begin{proposition}
For any $\ell \in \mathscr{L}(U)$, $\img \uppsi_{\ell}$ is a cofree $\overbar{\Lambda}$-module and $\img \uppsi_{\ell}=\uppsi_{\ell}(\dlim \res_{\ell} R_n d_n(\ell \ell_1))$
\end{proposition}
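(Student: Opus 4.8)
The plan is to read off both assertions from the description of $\uppsi_\ell$ as the $\overbar{\Lambda}$-dual of the localization map $\res_\ell\colon X_p(E/K_\infty)\to\ilim E(K_{n,\ell})/p$, the duality being the isomorphism (\ref{Tatelocalduality_isomorphism}). Concretely, for $w\in\dlim H^1(K_{n,\ell},E)[p]$ represented at level $n$ by $w_n$ and $s=(s_m)\in X_p(E/K_\infty)$ with $s_n\in\Sel(E/K_n)$, one has $\uppsi_\ell(w)(s)=\langle \res_\ell s_n, w_n\rangle_\ell$, the local Tate pairing at $\ell$. Both claims will be extracted from this formula.

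For the second assertion I would start from Corollary \ref{Iwasawa_rank_corollary}, which tells us that $\dlim\res_\ell R_n d_n(\ell)$ and $\dlim\res_\ell R_n d_n(\ell\ell_1)$ together generate $\dlim H^1(K_{n,\ell},E)[p]$. Since $\img\uppsi_\ell$ is the $\uppsi_\ell$-image of this whole module, additivity reduces the equality $\img\uppsi_\ell=\uppsi_\ell(\dlim\res_\ell R_n d_n(\ell\ell_1))$ to showing that $\uppsi_\ell$ annihilates the complementary piece $\dlim\res_\ell R_n d_n(\ell)$.

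This vanishing is the crux of the proof, and I expect it to come from global duality. A typical generator of $\dlim\res_\ell R_n d_n(\ell)$ is the image of $\res_\ell(\sigma d_n(\ell))$ for some $\sigma\in R_n$, so by the formula above $\uppsi_\ell(\res_\ell\sigma d_n(\ell))(s)=\langle \res_\ell s_n, \res_\ell\sigma d_n(\ell)\rangle_\ell$ for every $s$. The point is that the global class $\sigma d_n(\ell)\in H^1(K_n,E)[p]$ has trivial localization at every rational prime $\ell'\neq\ell$: this is property (2) of the Kolyvagin classes for $d_n(\ell)$, and it persists after applying $\sigma$ because the $R_n$-action is by Galois automorphisms, which merely permute the primes above each $\ell'$. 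Feeding $s_n\in\Sel(E/K_n)$ and $\gamma=\sigma d_n(\ell)$ into Proposition \ref{global_duality_proposition} collapses the global sum $\sum_{\ell'}\langle\res_{\ell'}s_n,\res_{\ell'}\sigma d_n(\ell)\rangle_{\ell'}=0$ to its single surviving term, forcing $\langle\res_\ell s_n,\res_\ell\sigma d_n(\ell)\rangle_\ell=0$. As this holds for all $s$, the functional $\uppsi_\ell(\res_\ell\sigma d_n(\ell))$ vanishes; since such elements generate $\dlim\res_\ell R_n d_n(\ell)$, the map $\uppsi_\ell$ vanishes on all of it, which gives the asserted equality.

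Finally, for cofreeness I would argue structurally rather than through generators. By Proposition \ref{Iwasawa_rank_proposition} together with (\ref{Tatelocalduality_isomorphism}), the compact module $\ilim E(K_{n,\ell})/p$ is free of rank $2$ over the PID $\overbar{\Lambda}$, being the Pontryagin dual of a cofree module of corank $2$. Because $X_{p^\infty}(E/K_\infty)$, and hence $X_p(E/K_\infty)$, is finitely generated (Theorem \ref{Iwasawa_rank_main_theorem}), the image $\res_\ell(X_p(E/K_\infty))$ is a finitely generated submodule of this free module, and over a PID such a submodule is itself free. Exactness of Pontryagin duality then identifies $\img\uppsi_\ell$ with $(\res_\ell(X_p(E/K_\infty)))^{\dual}$ inside $X_p(E/K_\infty)^{\dual}$, and the dual of a free $\overbar{\Lambda}$-module is cofree. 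The only thing to verify carefully in this last step is that passing to images and duals is exact in the relevant compact/discrete setting, which it is, so I do not anticipate a genuine obstacle here.
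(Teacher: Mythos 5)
Your proposal is correct and follows essentially the same route as the paper: the equality $\img \uppsi_{\ell}=\uppsi_{\ell}(\dlim \res_{\ell} R_n d_n(\ell \ell_1))$ is obtained from Corollary \ref{Iwasawa_rank_corollary} once one shows $\uppsi_{\ell}$ kills $\dlim \res_{\ell} R_n d_n(\ell)$, which is exactly the collapse of the reciprocity sum of Proposition \ref{global_duality_proposition} via property (2) of the Kolyvagin classes, and your cofreeness argument (identifying $(\img\uppsi_{\ell})^{\dual}$ with the finitely generated submodule $\res_{\ell}(X_p(E/K_{\infty}))$ of the free rank-two module $\ilim E(K_{n,\ell})/p$ over the PID $\overbar{\Lambda}$) is the dual formulation of the paper's appeal to the cofreeness of $\dlim H^1(K_{n,\ell},E)[p]$ from Proposition \ref{Iwasawa_rank_proposition}. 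You simply spell out the details that the paper leaves implicit.
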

\begin{proof}
The fact that $\img \uppsi(\ell)$ is a cofree $\overbar{\Lambda}$-module follows from the fact that $\dlim H^1(K_{n,\ell}, E)[p]$ is cofree (proposition \ref{Iwasawa_rank_proposition}). As for the fact that $\img \uppsi_{\ell}=\uppsi_{\ell}(\dlim \res_{\ell} R_n d_n(\ell \ell_1))$, it follows from corollary \ref{Iwasawa_rank_corollary} together with the fact that by proposition \ref{global_duality_proposition} and property (2) of the Kolyvagin classes in section 2.2 we get that $\uppsi_{\ell}(\dlim \res_{\ell} R_n d_n(\ell)) = 0$.
\end{proof}

\begin{proposition}
We have $\rank_{\overbar{\Lambda}}(X_p(E/K_{\infty})) \le 1$
\end{proposition}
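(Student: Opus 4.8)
The plan is to bound the $\overbar{\Lambda}$-corank of the dual $X_p(E/K_{\infty})^{\dual}$ by $1$. By proposition \ref{generating_Xp_proposition} this module is generated by the images $\img \uppsi_{\ell}$ as $\ell$ ranges over $\mathscr{L}(U)$, and by the preceding proposition each such image is cofree and equals $\uppsi_{\ell}(\dlim \res_{\ell} R_n d_n(\ell\ell_1))$. The key idea is not to bound the coranks of these images one at a time and sum them (which would only yield a bound growing with the number of primes), but rather to use global duality to show that every one of them already sits inside the single image $\img \uppsi_{\ell_1}$ attached to the fixed auxiliary prime $\ell_1$. Granting this, $X_p(E/K_{\infty})^{\dual}=\img \uppsi_{\ell_1}$, and it remains only to bound the corank of this one module.

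First I would carry out the collapse to $\ell_1$. Fix $\ell \in \mathscr{L}(U)$ and $s=(s_n) \in X_p(E/K_{\infty})$ with $s_n \in \Sel(E/K_n)$. By the definition of $\uppsi_{\ell}$ as the dual of $\res_{\ell}$ via the Tate pairing (\ref{Tatelocalduality_isomorphism}), the functional $\uppsi_{\ell}(\res_{\ell} d_n(\ell\ell_1))$ sends $s$ to $\langle \res_{\ell} s_n, \res_{\ell} d_n(\ell\ell_1)\rangle_{\ell}$. Applying the global reciprocity law of proposition \ref{global_duality_proposition} to $s_n$ and $d_n(\ell\ell_1)$, and using property (2) of the Kolyvagin classes (which makes $d_n(\ell\ell_1)$ locally trivial away from $\ell$ and $\ell_1$), the total sum collapses to $\langle \res_{\ell} s_n, \res_{\ell} d_n(\ell\ell_1)\rangle_{\ell}+\langle \res_{\ell_1} s_n, \res_{\ell_1} d_n(\ell\ell_1)\rangle_{\ell_1}=0$. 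Hence this functional factors through $\res_{\ell_1}$, so it lies in $\img \uppsi_{\ell_1}$; since $\img \uppsi_{\ell}$ is generated over $\overbar{\Lambda}$ by elements of this shape and all the maps involved are $\Gamma$-equivariant, I obtain $\img \uppsi_{\ell} \subseteq \img \uppsi_{\ell_1}$, and therefore $X_p(E/K_{\infty})^{\dual}=\img \uppsi_{\ell_1}$.

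It then remains to show $\corank_{\overbar{\Lambda}} \img \uppsi_{\ell_1} \le 1$, equivalently $\rank_{\overbar{\Lambda}} \res_{\ell_1}(X_p(E/K_{\infty})) \le 1$. Since $\ell_1$ is a Kolyvagin prime, proposition \ref{Iwasawa_rank_proposition} together with Tate duality (\ref{Tatelocalduality_isomorphism}) identifies $\ilim E(K_{n,\ell_1})/p$ with a free $\overbar{\Lambda}$-module of rank $2$. Applying proposition \ref{global_duality_proposition} this time to $s_n$ and the class $d_n(\ell_1)$, which by property (2) is locally trivial outside $\ell_1$, yields $\langle \res_{\ell_1} s_n, \res_{\ell_1} d_n(\ell_1)\rangle_{\ell_1}=0$ for all $s \in X_p(E/K_{\infty})$. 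Thus $\res_{\ell_1}(X_p(E/K_{\infty}))$ is annihilated, under the perfect local pairing, by the submodule $\dlim \res_{\ell_1} R_n d_n(\ell_1)$, which by property (3) is isomorphic to $\dlim \res_{\ell_1} R_n \alpha_n$ and so has $\overbar{\Lambda}$-corank at least $1$ by the very choice of $\ell_1$ (as in the proof of proposition \ref{Kolyvagin_classes_rank_proposition}). Under a perfect pairing with a free module of rank $2$ the annihilator of a submodule of corank $\ge 1$ has rank $\le 1$, so $\rank_{\overbar{\Lambda}} \res_{\ell_1}(X_p(E/K_{\infty})) \le 1$, as desired.

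The hard part will be the limit and equivariance bookkeeping in the collapse step: one must check that the level-by-level identity of functionals produced by proposition \ref{global_duality_proposition} is compatible with passage to the direct and inverse limits and with the $R_n$- and $\Gamma$-actions (in particular that each Galois translate of $d_n(\ell\ell_1)$ remains locally trivial outside $\ell$ and $\ell_1$, since these primes split completely in $K_{\infty}/K$), so that $\img \uppsi_{\ell} \subseteq \img \uppsi_{\ell_1}$ holds as $\overbar{\Lambda}$-modules rather than merely on generators. I also expect to invoke the standard fact that, by Pontryagin duality, any functional on $X_p(E/K_{\infty})$ factoring through the localisation $\res_{\ell_1}$ extends to the free module $\ilim E(K_{n,\ell_1})/p$ and hence genuinely lies in $\img \uppsi_{\ell_1}$, together with the additivity of $\overbar{\Lambda}$-corank in short exact sequences of cofinitely generated modules over the discrete valuation ring $\overbar{\Lambda}$, which underlies the final annihilator rank count.
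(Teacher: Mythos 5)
Your proposal is correct and follows essentially the same route as the paper: both arguments use proposition \ref{generating_Xp_proposition} to reduce to the images $\img\uppsi_{\ell}$, apply global reciprocity to $d_n(\ell\ell_1)$ (locally trivial away from $\ell$ and $\ell_1$) to collapse every $\img\uppsi_{\ell}$ into $\img\uppsi_{\ell_1}$, and then bound $\img\uppsi_{\ell_1}$ by corank $1$ using that $\uppsi_{\ell_1}$ kills the corank-$\geq 1$ submodule $\dlim\res_{\ell_1}R_n d_n(\ell_1)$ of the cofree rank-$2$ module $\dlim H^1(K_{n,\ell_1},E)[p]$. The only cosmetic difference is that the paper splits $\mathscr{L}$ into the primes with $\uppsi_{\ell}(\dlim\res_{\ell}R_n d_n(\ell\ell_1))$ zero or nonzero and concludes equality $\img\uppsi_{\ell}=\img\uppsi_{\ell_1}$ in the latter case, whereas you observe that containment alone suffices for the rank bound.
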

\begin{proof}
According to proposition \ref{generating_Xp_proposition} $\img \uppsi_{\ell}$ with $\ell$ ranging over $\mathscr{L}(U)$ generate $X_p(E/K_{\infty})^{\; \dual}$. Write $\mathscr{L}$ as a disjoint union $\mathscr{L} = \mathscr{L}_1 \dotcup \mathscr{L}_2$ where $\mathscr{L}_1$ (resp. $\mathscr{L}_2$) consists of the primes $\ell \in \mathscr{L}$ such that $\uppsi_{\ell}(\dlim \res_{\ell} R_n d_n(\ell \ell_1))$ is zero (resp. nonzero).

If $\mathscr{L}_2$ is empty, then $X_p(E/K_{\infty})=\{0\}$. Otherwise assume that $\mathscr{L}_2$ is nonempty. We will now show that $\rank_{\overbar{\Lambda}}(X_p(E/K_{\infty})) =1$

Recall that $\ell_1$ was chosen so that $\corank_{\overbar{\Lambda}}(\dlim \res_{\ell_1} R_n \alpha_n) \geq 1$. Therefore by property (3) of the Kolyvagin classes in section 2.2 we have $\corank_{\overbar{\Lambda}}(\dlim \res_{\ell_1} R_n d_n(\ell_1)) \geq 1$. Just as in the previous proposition we have that $\uppsi_{\ell_1}(\dlim \res_{\ell_1} R_n d_n(\ell_1)) = 0$ and that $\img \uppsi_{\ell_1}$ is a cofree $\overbar{\Lambda}$-module. Since $\corank_{\overbar{\Lambda}}(\dlim \res_{\ell_1} R_n d_n(\ell_1)) \geq 1$ therefore taking proposition \ref{Iwasawa_rank_proposition} into account it follows that $\img \uppsi_{\ell_1}$ is a cofree $\overbar{\Lambda}$-module of rank less than or equal to 1.

Now assume that $\ell \in \mathscr{L}_2$ and $\alpha \in \dlim R_n d_n(\ell \ell_1)$. Then using proposition \ref{global_duality_proposition} together with property (2) of the Kolyvagin classes in section 2.2 it follows that

$$\uppsi_{\ell}(\res_{\ell}(\alpha)) + \uppsi_{\ell_1}(\res_{\ell_1}(\alpha)) = 0$$\\
So $\uppsi_{\ell}(\res_{\ell}(\alpha)) \in \img \uppsi_{\ell_1}$ and so by the previous proposition we get $\img \uppsi_{\ell} \subseteq \img \uppsi_{\ell_1}$. Then using our earlier obervation that $\img \uppsi_{\ell_1}$ is $\overbar{\Lambda}$-cofree of rank less than or equal to 1 together with the facts that $\img \uppsi_{\ell}$ is nonzero and cofree as a $\overbar{\Lambda}$-module (from the previous propostion) it follows that $\img \uppsi_{\ell}=\img \uppsi_{\ell_1}$. Our desired result then follows from this.
\end{proof}

We can now finally prove theorem A

\begin{theoremA}
Assume that $E$ has good ordinary reduction at $p$, then $\Selinf(E/K_{\infty})$ has $\Lambda$-corank equal to 1 and $X_{p^\infty}(E/K_{\infty})$ is a free $\Lambda$-module of rank 1.
\end{theoremA}\begin{proof}

From theorem \ref{Iwasawa_rank_main_theorem} together with the previous proposition we get that $\Selinf(E/K_{\infty})^{\; \dual}$ is a finitely generated $\Lambda$-module, $X_{p^{\infty}}(E/K_{\infty})$ is a finitely generated free $\Lambda$-module and $X_p(E/K_{\infty})$ is a finitely generated free $\overbar{\Lambda}$-module with

$$\corank_{\Lambda}(\Selinf(E/K_{\infty})) = \rank_{\Lambda}(X_{p^{\infty}}(E/K_{\infty})) = \rank_{\overbar{\Lambda}}(X_p(E/K_{\infty})) \le 1 $$\\
Therefore the theorem will follow if we can show that $\corank_{\Lambda}(\Selinf(E/K_{\infty})) \geq 1$. We now show this as follows: using theorem 1.4 of \cite{Vatsal} together with the main result of \cite{BD} it follows that $\rank(E(K_n))=p^n +O(1)$ and therefore $\corank_{\Zp}(E(K_n) \otimes \Qp/\Zp)= p^n +O(1)$. This implies that $\corank_{\Lambda}(E(K_{\infty}) \otimes \Qp/\Zp) \geq 1$ and since $E(K_{\infty}) \otimes \Qp/\Zp$ is contained in $\Selinf(E/K_{\infty})$ it therefore follows that $\corank_{\Lambda}(\Selinf(E/K_{\infty})) \geq 1$ as desired.
\end{proof}

\section{Proof of Theorem B}

In this section we prove theorem B. We assume thoroughout this section the assumptions for theorem B in section 2.1. By the same argument in the beginning of section 3 we may (and will) assume that $E$ is a strong Weil curve such that the map $\pi_*: J_0(N) \to E$ coming from the modular parametrization $\pi: X_0(N) \to E$ has a geometrically connected kernel.

Recall from section 2.2 that we have $R_n \alpha_n \subset R_{n+2} \alpha_{n+2}$ for any $n$. This allows us to construct the direct limits $\dlim R_{2n} \alpha_{2n}$ and $\dlim R_{2n+1} \alpha_{2n+1}$. We recall the following conjecture from the introduction

\begin{conjecture*}
If $p$ splits in $K/\Q$ and $E$ has good supersingular reduction at $p$ then the submodule of $E(K_{\infty})/p$ generated by $\dlim R_{2n} \alpha_{2n}$ and $\dlim R_{2n+1} \alpha_{2n+1}$ has $\overbar{\Lambda}$-corank greater than or equal to two.
\end{conjecture*}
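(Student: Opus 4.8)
Since the generators $\dlim R_{2n}\alpha_{2n}$ and $\dlim R_{2n+1}\alpha_{2n+1}$ together exhaust all the $R_n\alpha_n$, the module in question is nothing but $\Theta := \dlim R_n\alpha_n$, and so both its finite generation over $\overbar{\Lambda}$ and the bound $\corank_{\overbar{\Lambda}}\Theta \geq 1$ are already contained in Theorem~\ref{Heegner_module_rank_theorem} (the latter via Cornut, \cite{Cornut} th.~B). The whole force of the conjecture is therefore the improvement from $1$ to $2$, and the point of recording it through the two parity pieces $\Theta_0 := \dlim R_{2n}\alpha_{2n}$ and $\Theta_1 := \dlim R_{2n+1}\alpha_{2n+1}$ --- which are genuinely separate systems because the supersingular relation $\text{Tr}_{K_{n+1}/K_n}(\alpha_{n+1}) = -\alpha_{n-1}$ links only indices of the same parity --- is to suggest that each piece should contribute one unit of corank. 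The plan is thus to run the machine of section~3 twice in parallel, once on each parity.

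Concretely, I would first try to show that \emph{each} of $\Theta_0$ and $\Theta_1$ is infinite-dimensional over $\Fp$, hence non-$\overbar{\Lambda}$-cotorsion. Granting this, choose nonzero maps $\phi_0 : \overbar{\Lambda}^{\dual} \to \Theta_0$ and $\phi_1 : \overbar{\Lambda}^{\dual} \to \Theta_1$, each lying in a single $\tau$-eigenspace, and pick invariant classes $s_i \in (\img\phi_i)^{\Gamma} - \{0\}$; by corollary~\ref{Ep_corollary} these lie in $H^1(K, E[p])$, just as $s$ and $s'$ do in section~3. The two maps then assemble into a surjection $(\overbar{\Lambda}\oplus\overbar{\Lambda})^{\dual} \twoheadrightarrow \img\phi_0 + \img\phi_1 \subseteq \Theta$, so $(\img\phi_0 + \img\phi_1)^{\dual}$ is free of rank $1$ or $2$ over $\overbar{\Lambda}$, with rank exactly $2$ iff $\dim_{\Fp}(\img\phi_0 + \img\phi_1)^{\Gamma} = 2$, i.e. iff $s_0$ and $s_1$ are $\Fp$-linearly independent. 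This is the same $\Gamma$-invariant count used in the proposition preceding corollary~\ref{Iwasawa_rank_corollary}, and it would immediately yield $\corank_{\overbar{\Lambda}}\Theta \geq 2$.

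The hard part is precisely what distinguishes the two parities, and it is the reason this is stated as a conjecture. Cornut's theorem produces infinitely many independent Heegner classes spread across the whole tower, but as stated it does not resolve them by parity: it guarantees neither that $\Theta_0$ and $\Theta_1$ are \emph{individually} non-cotorsion, nor that the resulting invariant directions $s_0$ and $s_1$ are independent rather than equal modulo cotorsion. The natural way to supply the missing separation is local at the split prime $p$: there are two primes $\fp_1, \fp_2$ of $K$ above $p$, and one expects the even and odd Heegner systems to localize differently at them --- this is the supersingular origin of the factor $2$ and matches the $\corank = 2$ of Theorem~B --- so that an argument in the spirit of proposition~\ref{linear_independence_proposition}, separating these two local behaviours, would give both the parity-wise infinitude and the independence of $s_0, s_1$. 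Proving such a parity-refined equidistribution (or the corresponding local-separation) statement is exactly the step I cannot complete, which is why the assertion is recorded as a conjecture rather than proved.
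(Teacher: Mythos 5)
You are right that this statement is a conjecture and that no complete proof exists; your two-step outline (show each parity piece $\Theta_0=\dlim R_{2n}\alpha_{2n}$ and $\Theta_1=\dlim R_{2n+1}\alpha_{2n+1}$ is non-cotorsion, then show the two resulting $\Gamma$-invariant classes are independent) is essentially the same decomposition of the problem that the paper adopts in section 4. But you have drawn the line between ``provable'' and ``open'' in the wrong place. The first step --- the parity-wise infinitude, i.e.\ that $\Theta_0$ and $\Theta_1$ are \emph{individually} not $\overbar{\Lambda}$-cotorsion --- is exactly what the paper proves as theorem \ref{Heegner_modules_rank_theorem}. The ``parity-refined equidistribution'' you say you cannot supply is available: one splits the CM points of $p$-power conductor into the sets $X^+$ and $X^-$ according to the parity of $\ord_p$ of the conductor, and the surjectivity of the simultaneous supersingular reduction map $\psi$ restricted to each of $X^{\pm}$ follows from theorem 3.5 of \cite{CV_DOC} (with remark 4.16 of \cite{CV_DUR}), which gives surjectivity of $\psi$ on every single Galois orbit $\Gamma x$ for all but finitely many CM points $x$ --- a statement strictly finer than parity. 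Feeding this into Cornut's original argument from \cite{Cornut} yields the non-cotorsionness of each $\Theta_i$ separately.

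What remains genuinely open is only your second step, and here your sufficient condition (independence of $s_0$ and $s_1$) is weaker than the clean reformulation the paper records: granting that each $\Theta_i$ has corank exactly one, the conjecture is \emph{equivalent} to $\Theta_0\cap\Theta_1$ being $\overbar{\Lambda}$-torsion (i.e.\ finite), whereas a particular pair $s_0,s_1$ being dependent would not by itself refute corank two. I would also flag your suggested mechanism for the missing independence: an argument ``in the spirit of proposition \ref{linear_independence_proposition}'' does not obviously transpose, since that proposition separates a Selmer class from a ramified-at-$\ell_1$ Kolyvagin class using the exact sequence $0\to E(K_n)/p\to H^1(K_n,E[p])\to H^1(K_n,E)[p]\to 0$, while here both $\Theta_0$ and $\Theta_1$ live inside $E(K_\infty)/p$, so that particular separating device is unavailable; the local behaviour at the two primes above $p$ is a plausible source of separation, but no such argument is known, which is precisely why the statement is a conjecture.
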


We will now discuss some evidence for this conjecture. The main evidence comes from the following theorem. See also the remarks after the theorem.

\begin{theorem}\label{Heegner_modules_rank_theorem}
The $\overbar{\Lambda}$-modules $\dlim R_{2n} \alpha_{2n}$ and $\dlim R_{2n+1} \alpha_{2n+1}$ are not cotorsion
\end{theorem}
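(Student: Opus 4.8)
The plan is to follow, mutatis mutandis, the proof of Theorem~\ref{Heegner_module_rank_theorem} from the ordinary case, treating each of the two parity-limits $\dlim R_{2n}\alpha_{2n}$ and $\dlim R_{2n+1}\alpha_{2n+1}$ in place of the single module $\dlim R_n\alpha_n$. For each of them I must establish two things: that its Pontryagin dual is a finitely generated $\overbar{\Lambda}$-module, and that this dual is not $\overbar{\Lambda}$-torsion.

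First I would dispose of finite generation, which is identical to the ordinary case. Since $E(K_{\infty})[p^{\infty}]=\{0\}$ by corollary~\ref{Ep_corollary}, the Kummer map embeds $E(K_{\infty})/p$ into $\Sel(E/K_{\infty})$, so both $\dlim R_{2n}\alpha_{2n}$ and $\dlim R_{2n+1}\alpha_{2n+1}$ are $\overbar{\Lambda}$-submodules of $\Sel(E/K_{\infty})$. As $\Selinf(E/K_{\infty})^{\dual}$ is finitely generated over $\Lambda$ and $\Sel(E/K_{\infty})\isomarrow\Selinf(E/K_{\infty})[p]$, the module $\Sel(E/K_{\infty})^{\dual}$ is finitely generated over $\overbar{\Lambda}$, and the dual of each parity-limit, being a quotient, is finitely generated as well.

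For the non-torsion assertion, since finitely generated torsion $\overbar{\Lambda}$-modules are finite, it suffices to show that each parity-limit is infinite-dimensional over $\Fp$. The mechanism I would use is the supersingular norm relation. With $a_p=0$, the relation $\mathrm{Tr}_{K_{n+2}/K_{n+1}}(\alpha_{n+2})=a_p\alpha_{n+1}-\alpha_n=-\alpha_n$ of section~2.2, combined with the identity $\mathrm{Tr}_{K_{n+2}/K_{n+1}}=(\gamma-1)^{p^{n+1}(p-1)}$ in $\Fp[G_{n+2}]$ (where $\gamma$ is the fixed topological generator of $\Gamma$ and $T=\gamma-1$), gives $\alpha_n=-(\gamma-1)^{p^{n+1}(p-1)}\alpha_{n+2}$ inside $E(K_{\infty})/p$. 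Hence, working in $E(K_{\infty})/p$, if $\alpha_n\neq 0$ then $T^{p^{n+1}(p-1)}\alpha_{n+2}\neq 0$, so the annihilator of $\alpha_{n+2}$ in $\overbar{\Lambda}$ is contained in $(T^{e})$ with $e>p^{n+1}(p-1)$, and therefore $\dim_{\Fp}R_{n+2}\alpha_{n+2}=e>p^{n+1}(p-1)$. Consequently, as soon as the Heegner points of one parity are nonzero mod $p$ at arbitrarily large levels, the dimensions $\dim_{\Fp}R_{2n}\alpha_{2n}$ (resp. $\dim_{\Fp}R_{2n+1}\alpha_{2n+1}$) tend to infinity, and the associated parity-limit is infinite-dimensional, hence not cotorsion.

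The hard part — and the point at which the supersingular case genuinely differs from Theorem~\ref{Heegner_module_rank_theorem} — is guaranteeing the required non-vanishing separately in \emph{each} parity. Because $a_p=0$ decouples the recurrence into its even and odd parts, the span furnished by Cornut's Theorem~B is exactly the sum of the two parity-limits, and the statement cited in the introduction only yields that this sum is infinite-dimensional; a priori all of the infinitude could be carried by a single parity, and nothing said so far rules out, say, that every $\alpha_{2n}$ vanishes mod $p$ for $n$ large. To close this gap I would apply Cornut's theorem to each parity separately: the even points $\alpha_{2n}$ arise from Heegner points of conductor $p^{2n+1}$ and the odd points $\alpha_{2n+1}$ from conductor $p^{2n+2}$, and each family is a sequence of Heegner points of unbounded conductor exhausting the anticyclotomic tower, so Cornut's equidistribution argument applies to each arithmetic progression of conductors and produces infinitely many nonzero points of each parity. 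Granting this per-parity input, the growth estimate of the previous paragraph immediately forces both $\dim_{\Fp}R_{2n}\alpha_{2n}$ and $\dim_{\Fp}R_{2n+1}\alpha_{2n+1}$ to infinity and completes the proof; establishing the per-parity non-vanishing is the main obstacle.
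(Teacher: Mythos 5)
Your reduction of the statement to a per-parity non-vanishing claim is a genuinely different (and, as far as it goes, correct) route from the paper's. The norm-relation computation is sound: with $a_p=0$ one has $\mathrm{Tr}_{K_{n+2}/K_{n+1}}(\alpha_{n+2})=-\alpha_n$, and since $\sum_{\sigma\in\Gal(K_{n+2}/K_{n+1})}\sigma=(\gamma-1)^{p^{n+1}(p-1)}$ in $\Fp[G_{n+2}]$, the cyclicity of $R_{n+2}\alpha_{n+2}$ over the local ring $\Fp[T]/(T^{p^{n+2}})$ does force $\dim_{\Fp}R_{n+2}\alpha_{n+2}>p^{n+1}(p-1)$ whenever $\alpha_n\neq 0$ in $E(K_{n})/p$. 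This would let you get away with much less than the paper proves: you only need each parity class to contain nonzero points mod $p$ at arbitrarily large levels, rather than an infinite-dimensional span. The paper instead runs Cornut's full argument and concludes infinite-dimensionality of each parity-limit directly.

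The genuine gap is exactly where you place it, and it is not closable by the phrase ``Cornut's equidistribution argument applies to each arithmetic progression of conductors.'' Cornut's Theorem B, as cited, only controls the total span over all conductors, and his Theorem A (non-torsionness) says nothing mod $p$; neither gives non-vanishing of $\alpha_n$ in $E(K_n)/p$ along a prescribed parity class. The paper's entire proof is devoted to supplying precisely this input, and it requires a strictly stronger statement than Cornut's original theorem: one needs the surjectivity of the simultaneous supersingular-reduction map $\psi$ restricted to each individual Galois orbit $\Gamma x$ for all but finitely many CM points $x$ of $p$-power conductor. This is theorem 3.5 of Cornut--Vatsal \cite{CV_DOC} (made applicable to $X_0(N)$ via remark 4.16 of \cite{CV_DUR}), combined with the observation that the conductor, hence the parity $\mathrm{ord}_p(c(a))\bmod 2$, is constant on each orbit $\Gamma x$, so orbit-level surjectivity descends to surjectivity of $\psi|_{X^+}$ and $\psi|_{X^-}$ separately. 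Without invoking this refinement (or proving an equivalent), your proposal asserts the one claim that carries all the difficulty of the theorem; everything else in your argument is correct but peripheral.
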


\begin{proof}
The fact that $(\dlim R_{2n} \alpha_{2n})^{\; \dual}$ and $(\dlim R_{2n+1} \alpha_{2n+1})^{\; \dual}$ are finitely generated over $\overbar{\Lambda}$ follows exactly as in theorem \ref{Heegner_module_rank_theorem}. Since finitely generated torsion $\overbar{\Lambda}$-modules are finite, therefore to show that $\dlim R_{2n} \alpha_{2n}$ and $\dlim R_{2n+1} \alpha_{2n+1}$ are not cotorsion we only have to show that these modules are infinite.

We will show this by the same method used to prove theorem B in Cornut's paper \cite{Cornut}. Using the same notation as in Cornut's paper we let $N$ denote the conductor of $E$ and we let $M$ be the integer defined on page 517 of Cornut's paper. Recall from section 2.2 that $K[p^{\infty}]=\cup_{n \geq 1} K[p^n]$. Now let $\ell$ be a rational prime not dividing $2NMp$ that is inert in $K/\Q$. Choose a place $v_{\ell}$ of $K[p^{\infty}]$ above $\ell$ and let $k(\ell)$ denote its residue field, so that $k(\ell) \cong \mathbb{F}_{\ell^2}$. We have a reduction map at $v_{\ell}$

$$\text{red}_{\ell} : X_0(NM)(K[p^{\infty}]) \to X_0(NM)(k(\ell))$$\\
As Cornut notes, $\text{red}_{\ell}$ maps any CM point (relative to $K$) to the supersingular locus $X^{ss}_0(NM)(k(\ell))$ of $X_0(NM)(k(\ell))$.

Now let $\mathscr{L}_p$ be the set defined on page 506 of Cornut's paper. Using the notation on page 517 of the paper we have for any $a \in \mathscr{L}_p$ a Heegner point $H'(a) \in X_0(NM)(K[p^{\infty}])$. Let $c(a)$ denote the conductor of $H'(a)$. We now define the following subsets of $X_0(NM)(K[p^{\infty}])$

$$X^+ = \{ \sigma(H'(a)) \; | \; a \in \mathscr{L}_p, \; \sigma \in \Gal(K[p^{\infty}]/K) \; \text{and} \; \text{ord}_p(c(a)) \equiv 0 \mod 2\}$$

$$X^- = \{ \sigma(H'(a)) \; | \; a \in \mathscr{L}_p, \; \sigma \in \Gal(K[p^{\infty}]/K) \; \text{and} \; \text{ord}_p(c(a)) \equiv 1 \mod 2\}$$\\
We also define $X$ be the set of all Heegner points relative to $K$ of $p$-power conductor.

Let $\widehat{K} = K \otimes \, \widehat{\Z}$ and $\widehat{K}^{(p)} = K \otimes \, \widehat{\Z}^{(p)}$ where $\widehat{\Z}$ and $\widehat{\Z}^{(p)}$ are the profinite and ``prime to p-adic''completions of $\Z$. Denote the Artin reciprocity map as

$$[K[p^{\infty}]/K, \star]: \widehat{K}^* \to \Gal(K[p^{\infty}]/K)$$\\
Now let $S$ be a finite set of rational primes $\ell \nmid 2NMp$ which are inert in $K/\Q$ and let $\mathcal{R}$ be a finite subset of $\Gal(K[p^{\infty}]/K)$ consisting of elements that are pairwise distinct modulo $[K[p^{\infty}]/K, \widehat{K}^{(p)*}]$. For any such sets $S$ and $\mathcal{R}$ we define the following map

\begin{align*}
\psi: \, & X \to \prod_{\ell \in S} X^{ss}_0(NM)(k(\ell))^{\mathcal{R}}\\
& \alpha \mapsto (\text{red}_{\ell}(\sigma(\alpha)))_{\sigma \in \mathcal{R}, \ell \in S}
\end{align*}\\
From Cornut's proof of theorem B in his paper, we see that to obtain our desired result we only have to show for any sets $S$ and $\mathcal{R}$ as above that $\psi|_{X^+}$ and $\psi|_{X^-}$ are surjective.

To show this last statement we use the work of Cornut and Vatsal \cite{CV_DOC} which is a generalization of the work of Cornut \cite{Cornut} to CM points on Shimura curves. Taking into account remark 4.16 of \cite{CV_DUR}, we apply theorem 3.5 of \cite{CV_DOC} to the connected Shimura curve $Y_0(N)= \mathcal{H}/\Gamma_0(N)$ to obtain the following result: $\psi|_{\Gamma x}$ is surjective for all but finitely many $x \in X$ where $\Gamma x =\{ \sigma(x) \; |  \; \sigma \in \Gamma \}$. It immediately follows from this result that both $\psi|_{X^+}$ and $\psi|_{X^-}$ are surjective which as we noted above imply, as desired, that both $\dlim R_{2n} \alpha_{2n}$ and $\dlim R_{2n+1} \alpha_{2n+1}$ are not cotorsion as $\overbar{\Lambda}$-modules

\end{proof}

\begin{remark}

The main evidence in support of conjecture \ref{main_conjecture} comes from the above theorem. If either $\dlim R_{2n} \alpha_{2n}$ or $\dlim R_{2n+1} \alpha_{2n+1}$ has $\overbar{\Lambda}$-corank greater than two then the conjecture is true. Otherwise each of $\dlim R_{2n} \alpha_{2n}$ and $\dlim R_{2n+1} \alpha_{2n+1}$ has $\overbar{\Lambda}$-corank one and the conjecture in this case is equivalent to $\dlim R_{2n} \alpha_{2n} \cap \dlim R_{2n+1} \alpha_{2n+1}$ being $\overbar{\Lambda}$-torsion

Other evidence for this conjecture comes from the work of Ciperiani \cite{Ciperiani} which we will now explain. We define $R'_n=\cy{p^n}[G_n]$ and we let $R'_n \alpha_n$ be the $R'_n$-submodule of $H^1(K_n, E[p^n])$ generated by the image of $\alpha_n$ under the map

$$E(K_n) \to H^1(K_n, E[p^n])$$\\
As in section 2.2, using the natural transition maps one can show that $R'_n \alpha_n \subseteq R'_{n+2} \alpha_{n+2}$. This allows us to construct the direct limits $\dlim R'_{2n} \alpha_{2n}$ and $\dlim R'_{2n+1} \alpha_{n+2}$. Ciperiani (\cite{Ciperiani} prop. 2.1; see also \cite{CW} lemma 2.6.5) shows that the submodule of $E(K_{\infty})\otimes \Qp/\Zp$ generated by $\dlim R'_{2n} \alpha_{2n}$ and $\dlim R'_{2n+1} \alpha_{2n+1}$ has $\Lambda$-corank greater than or equal to two i.e. our conjecture is the ``mod $p$'' analog of her result.

\end{remark}

We now assume conjecture \ref{main_conjecture}. Let $M=\dlim R_{2n} \alpha_{2n} + \dlim R_{2n+1} \alpha_{2n+1}$. Since $M$ has $\overbar{\Lambda}$-corank greater or equal to two and $\overbar{\Lambda}$ is a PID therefore $M^{\dual}$ is isomorphic to $\overbar{\Lambda}^r \oplus T'$ where $r \geq 2$ and $T'$ is a finite. It follows that $M=F \oplus T$ where $F$ is cofree of rank $r$ and $T$ is finite. Since $T$ is finite there exists an $m \in \N$ such that $(g-1)^{p^m}T=\{0\}$ where $g$ is a topological generator of $\Gamma$ (note that $(g-1)^{p^m} \equiv g^{p^m}-1 \mod p$). It follows that $(g-1)^{p^m}M \subseteq F$ but as $(g-1)^{p^m}M$ has $\overbar{\Lambda}$-corank $r$ and $F$ is a cofree $\overbar{\Lambda}$-module of rank $r$, we therefore must have $F=(g-1)^{p^m}M$. Since $M$ is $\tau$-invariant therefore $(g-1)^{p^m}M$ is also $\tau$-invariant and so $F$ is $\tau$-invaraint. Composing the isomorphism $\overbar{\Lambda}^{r \, \dual} \isomarrow F$ with the inclusion $F \hookrightarrow M$ we have shown that there exists a map

$$\phi: \overbar{\Lambda}^{r \, \dual} \to \dlim R_{2n} \alpha_{2n} + \dlim R_{2n+1} \alpha_{2n+1}$$\\
such that $\img \phi$ is $\tau$-invariant and is cofree with $\overbar{\Lambda}$-corank $r \geq 2$. Therefore $\dim_{\Fp}(\img \phi)^{\Gamma} =r \geq 2$. Moreover as $\img \phi$ is $\tau$-invariant and $\tau g \tau = g^{-1}$ for any $g \in \Gamma$ therefore $(\img \phi)^{\Gamma}$ is $\tau$-invariant as well.

We are now ready to define the subgroup $S_{n_0} \subset H^(K_{n_0}, E[p])$ and the set $U$ in section 2.3. Note that the endomorphism $\tau$ of $(\img \phi)^{\Gamma}$ is diagonalizable. We consider 2 cases:\\

\noindent \textit{Case 1.} The endomorphism $\tau$ of $(\img \phi)^{\Gamma}$ has two eigenspaces\\

Choose $s,s' \in (\img \phi)^{\Gamma}$ such that $\tau s = s$ and $\tau s' = -s'$. This case is similar to case 1 in section 3. Since $s$ and $s'$ are both $\Gamma$-invariant therefore they belong to $H^1(K, E[p])$. Now let $S$ be the subgroup of $H^1(K, E[p])$ generated by $s$ and $s'$. We now let $n_0=0$ and $S_{n_0}=S$. Let $V=\Gal(L_ S/L)$ where $L=K(E[p])$. We will denote $L_{\{\Fp s\}}$ and $L_{\{\Fp s'\}}$ by $L_s$  and $L_{s'}$ respectively. As in section 3 we have

$$V=\Gal(L_s/L) \times \Gal(L_{s'}/L) =E[p] \times E[p]$$\\
Complex conjugation $\tau$ acts on $V$ by

$$\tau(x,y)\tau = (\epsilon \, \tau x, -\epsilon \, \tau y)$$\\
Let $E[p]^{\epsilon}$ denote the submodule of $E[p]$ on which $\tau$ acts as $\epsilon \in \{+1, -1 \}$. We now define a subset $U$ of $V$ as

$$U=\{(x,y) \; | \; x \in E[p]^+ -\{0\} \ \text{and} \ y \in E[p]^- - \{0\}\}$$\\
It is clear that $U^+$ generates $V^+$\\

\noindent \textit{Case 2.} The endomorphism $\tau$ of $(\img \phi)^{\Gamma}$ has one eigenspace\\

Choose 2 $\Fp$-linearly independent elements $s, s' \in (\img \phi)^{\Gamma}$. Then $\tau s = \epsilon s$ and $\tau s' = \epsilon s'$ for some $\epsilon \in \{+1, -1\}$. This case is similar to case 2 of section 3. As in that case we will consider the $\Gamma$-invariants of $\img \phi / \langle s \rangle$ and $\img \phi / \langle s' \rangle$

Since $\img \phi$ is a cofree $\overbar{\Lambda}$-module of rank $r$ therefore $\img \phi / \langle s \rangle$ is also cofree of rank $r$ and so $\dim_{\Fp}(\img \phi / \langle s \rangle)^{\Gamma}=r$. Also observe that the endomorphism $\tau$ of $(\img \phi / \langle s \rangle)^{\Gamma}$ is diagonalizable and $\dim_{\Fp}((\img \phi)^{\Gamma}/ \langle s \rangle) =r-1$. Therefore there exists an element $e \in \img \phi - (\img \phi)^{\Gamma}$ such that $e + \langle s \rangle \in (\img \phi / \langle s \rangle)^{\Gamma}$ and $\tau e +\langle s \rangle = \epsilon' e +\langle s \rangle$ for some $\epsilon' \in \{+1, -1\}$. As we showed in case 2 of section 3 we have $\epsilon' =- \epsilon$ and that we may further assume that $\tau e = -\epsilon e$. Similarly one can show that there exists an element $e' \in \img \phi - (\img \phi)^{\Gamma}$ such that $\tau e' = -\epsilon e'$ and $e' + \langle s' \rangle \in (\img \phi / \langle s' \rangle)^{\Gamma}$

Also just as we showed in section 3, we can show that $s, s' ,e$ and $e'$ are linearly independent over $\Fp$.

Now choose $n \in \N$ such that $s, s', e$ and $e'$ all belong to $H^1(K_n, E[p])$ and let $S$ be the subgroup of $H^1(K_n, E[p])$ generated by $s, s', e$ and $e'$. Then $\dim_{\Fp} S = 4 $. Note that $S$ is stable under $\Gal(K_n/\Q)$. Let $n_0=n$ and $S_{n_0} = S$. Let $V=\Gal(L_S/L)$ where $L = K_n(E[p])$. We will denote $L_{\{\Fp s\}}$ by $L_s$ and similarly for $s',e$ and $e'$. Since $\dim_{\Fp} S=4$ therefore we have

$$V=\Gal(L_s/L) \times \Gal(L_{s'}/L) \times \Gal(L_e/L) \times \Gal(L_{e'}/L) \cong E[p]^4$$\\
Complex conjugation $\tau$ acts on $V$ by

$$\tau(x,y,z,w)\tau = (\epsilon \, \tau x, \epsilon \, \tau y, -\epsilon \, \tau z, -\epsilon \, \tau w)$$\\
Let $E[p]^{\epsilon}$ denote the submodule of $E[p]$ on which $\tau$ acts as $\epsilon$. We now define

$$U_1=\{(x,0, 0, z) \; | \; x \in E[p]^{\epsilon}-\{0\} \ \text{and} \ z \in E[p]^{-\epsilon}-\{0\} \}$$\\
and
$$U_2=\{(0,y, w, 0) \; | \; y \in E[p]^{\epsilon}-\{0\} \ \text{and} \ w \in E[p]^{-\epsilon}-\{0\} \}$$\\
Finally we let

$$U=U_1 \cup U_2$$\\
It is clear that $U^+$ generates $V^+$

\begin{proposition}
For any $\ell \in \mathscr{L}(U)$ the submodule $\dlim \res_{\ell} R_{2n} \alpha_{2n} + \dlim \res_{\ell} R_{2n+1} \alpha_{2n+1}$ of $\dlim E(K_{n,\ell})/p$ has $\overbar{\Lambda}$-corank equal to 2
\end{proposition}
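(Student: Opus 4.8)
The plan is to transcribe the corank computation carried out in the final proposition of Section 3, with the single map $\phi\colon\overbar{\Lambda}^{r\,\dual}\to M$ constructed above (where $M=\dlim R_{2n}\alpha_{2n}+\dlim R_{2n+1}\alpha_{2n+1}$) now playing the combined role that $\phi$ and $\phi'$ played there. Write $\psi_{\ell}\colon M\to\dlim E(K_{n,\ell})/p$ for the restriction map, so that the submodule in question is exactly $N_{\ell}:=\psi_{\ell}(M)=\dlim\res_{\ell}R_{2n}\alpha_{2n}+\dlim\res_{\ell}R_{2n+1}\alpha_{2n+1}$, and set $\overbar{\phi}_{\ell}=\psi_{\ell}\circ\phi$, an $\overbar{\Lambda}$-map $\overbar{\Lambda}^{r\,\dual}\to\dlim E(K_{n,\ell})/p$ with image contained in $N_{\ell}$. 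The upper bound is immediate: by property (3) of the Kolyvagin classes together with proposition \ref{Iwasawa_rank_proposition}, $\dlim E(K_{n,\ell})/p$ is cofree of $\overbar{\Lambda}$-corank $2$, and since Pontryagin duality turns $N_{\ell}\hookrightarrow\dlim E(K_{n,\ell})/p$ into a surjection of finitely generated $\overbar{\Lambda}$-modules (and $\overbar{\Lambda}$-rank does not increase under surjection) we get $\corank_{\overbar{\Lambda}}(N_{\ell})\le 2$. Everything thus reduces to the lower bound $\corank_{\overbar{\Lambda}}(N_{\ell})\ge 2$.

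For the lower bound I would work entirely inside $\img\overbar{\phi}_{\ell}\subseteq N_{\ell}$. Since $\img\overbar{\phi}_{\ell}$ is a quotient of $\overbar{\Lambda}^{r\,\dual}$, its dual embeds in $\overbar{\Lambda}^{r}$ and is therefore free over the PID $\overbar{\Lambda}$; hence $\img\overbar{\phi}_{\ell}$ is cofree, so its $\overbar{\Lambda}$-corank equals $\dim_{\Fp}(\img\overbar{\phi}_{\ell})^{\Gamma}$. It therefore suffices to produce two $\Fp$-linearly independent, $\Gamma$-invariant elements of $\img\overbar{\phi}_{\ell}$: this forces $\dim_{\Fp}(\img\overbar{\phi}_{\ell})^{\Gamma}\ge 2$, and then the squeeze $2\le\corank_{\overbar{\Lambda}}(\img\overbar{\phi}_{\ell})\le\corank_{\overbar{\Lambda}}(N_{\ell})\le 2$ gives the result.

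The two required elements come from the analysis of the set $U$, exactly as in Section 3, and all the elements below lie in $\img\overbar{\phi}_{\ell}$ because $s,s',e'\in\img\phi$. In Case 1 the elements $s,s'\in(\img\phi)^{\Gamma}$ lie in opposite $\tau$-eigenspaces; the defining property of $U$ together with the non-degeneracy of the pairing (\ref{pairing}), via the computation $\Frob_{\lambda}=(x^+)^2$ for $\ell$ inert used in the proof of proposition \ref{generating_Xp_proposition}, shows that $\res_{\ell}s\neq 0$ and $\res_{\ell}s'\neq 0$ for every $\ell\in\mathscr{L}(U)$; being $\Gamma$-invariant and in different $\tau$-eigenspaces they are independent. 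In Case 2, for $\ell\in\mathscr{L}(U_1)$ the definition of $U_1$ gives $\res_{\ell}s\neq 0$, $\res_{\ell}e'\neq 0$ and $\res_{\ell}s'=0$; since $(g-1)e'=y's'$ with $y'\neq 0$, the vanishing $\res_{\ell}s'=0$ forces $(g-1)\res_{\ell}e'=y'\res_{\ell}s'=0$, so $\res_{\ell}e'$ is $\Gamma$-invariant, and $\res_{\ell}s,\res_{\ell}e'$ again lie in distinct $\tau$-eigenspaces, hence are independent. The subset $\mathscr{L}(U_2)$ is symmetric.

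The hard part will be the bookkeeping in the last paragraph: one must check that the chosen Frobenius class simultaneously makes the relevant localization nonzero (through the pairing) and kills the ``wrong'' class, and that the surviving localizations stay $\Gamma$-invariant and $\tau$-eigen in the correct, distinct eigenspaces. This is a direct adaptation of the corresponding step in Section 3, the only genuinely new feature being that a single cofree image $\img\phi$ of corank $r\ge 2$ replaces the two images $\img\phi$ and $\img\phi'$ used there.
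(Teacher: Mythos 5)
Your proposal is correct and follows essentially the same route as the paper: form $\overbar{\phi}_{\ell}=\psi_{\ell}\circ\phi$, use that its image is cofree (dual embeds in $\overbar{\Lambda}^{r}$) so its corank equals $\dim_{\Fp}$ of the $\Gamma$-invariants, and then exhibit two independent $\Gamma$-invariant elements via the case analysis on $U$ (Case 1: $s,s'$ in opposite $\tau$-eigenspaces; Case 2: $s,e'$ with $\res_{\ell}s'=0$ forcing $\res_{\ell}e'$ to be $\Gamma$-invariant), with the upper bound coming from proposition \ref{Iwasawa_rank_proposition}. The only cosmetic difference is that you establish $\dim_{\Fp}(\img\overbar{\phi}_{\ell})^{\Gamma}\ge 2$ and squeeze against the corank-$2$ ambient module, whereas the paper asserts equality directly; the substance is the same.
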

\begin{proof}
Consider the map we defined earlier

$$\phi: \overbar{\Lambda}^{r \, \dual} \to \dlim R_{2n} \alpha_{2n}+ \dlim R_{2n+1} \alpha_{2n+1}$$\\
For any $\ell \in \mathscr{L}(U)$ let

$$\psi_{\ell}: \dlim R_{2n} \alpha_{2n} + \dlim R_{2n+1} \alpha_{2n+1} \to \dlim \res_{\ell} R_{2n} \alpha_{2n} + \dlim \res_{\ell} R_{2n+1} \alpha_{2n+1}$$\\
be the restriction map. Then for any $\ell \in \mathscr{L}(U)$ we define $\overbar{\phi}_{\ell} =\psi_{\ell} \circ \phi$ .

To show that the $\overbar{\Lambda}$-corank of $\dlim \res_{\ell} R_{2n} \alpha_{2n} + \dlim \res_{\ell} R_{2n+1} \alpha_{2n+1}$ is equal to 2, it suffices to prove that $\img \overbar{\phi}_{\ell}$ has $\overbar{\Lambda}$-corank equal to 2, for then this implies that $\dlim \res_{\ell} R_{2n} \alpha_{2n} + \dlim \res_{\ell} R_{2n+1} \alpha_{2n+1}$ has $\overbar{\Lambda}$-corank greater or equal to 2. But by the isomorphism in property (3) of the Kolyvagin classes in section 2.2 together with proposition \ref{Iwasawa_rank_proposition} we have that the $\overbar{\Lambda}$-corank of $\dlim E(K_{n, \ell})/p$ is equal to 2 so we get equality.

We will show that $\img \overbar{\phi}_{\ell}$ has $\overbar{\Lambda}$-corank equal to 2. To show this we only need to show that $\dim_{\Fp}(\img \overbar{\phi}_{\ell})^{\Gamma}=2$. To see this we note that the map $\overbar{\phi}_{\ell}$ induces a surjection

$$\overbar{\Lambda}^{r \, \dual} \twoheadrightarrow \img \overbar{\phi}_{\ell} $$\\
Therefore $(\img \overbar{\phi}_{\ell})^{\; \dual}$ is a nonzero submodule of $\overbar{\Lambda}^r$ and hence free of rank less than or equal to $r$ over $\overbar{\Lambda}$. But as $\dim_{\Fp}(\img \overbar{\phi}_{\ell})^{\Gamma}= 2$ we therefore must have that the rank is equal to 2.

We now show that $\dim_{\Fp}(\img \overbar{\phi}_{\ell})^{\Gamma}=2$. In case 1 we have chosen $s,s' \in (\img \phi)^{\Gamma}$ such that $s$ and $s'$ belong to different eigenspaces for the action of complex conjugation $\tau$. Our definition of the set $U$ in this case gives that for any $\ell \in \mathscr{L}(U)$ we have $\res_{\ell}(s) \neq 0$ and $\res_{\ell}(s') \neq 0$. Also since $s$ and $s'$ are $\Gamma$-invariant and belong to different eigenspaces for the action of $\tau$, the same will be true for $\res_{\ell}(s)$ and $\res_{\ell}(s')$. Therefore we get the desired result in this case.

Now we consider case 2. Here we have also chosen $s,s' \in (\img \phi)^{\Gamma}$. In this case both $s$ and $s'$ belong to the same eigenspace for the action of $\tau$. We have chosen elements $e$ and $e'$ such that $e + \langle s \rangle \in (\img \phi / \langle s \rangle)^{\Gamma}$, $e' + \langle s' \rangle \in (\img \phi / \langle s' \rangle)^{\Gamma}$ and $e$ and $e'$ are eigenvectors for the action of $\tau$ belonging to a different eigenspace from $s$ and $s'$. We have in this case defined our set $U$ to be the union of 2 sets $U_1$ and $U_2$. Suppose that $\ell$ belongs to $\mathscr{L}(U_1)$. The definition of the set $U_1$ shows that $\res_{\ell}(s) \neq 0$, $\res_{\ell}(e') \neq 0$ and $\res_{\ell}(s') =0$. Since $\res_{\ell}(s') =0$ therefore $\res_{\ell}(e')$ is $\Gamma$-invariant. Then we get the desired result because $s$ and $e'$ belong to different eigenspaces for the action of $\tau$. We get a similar situation if $\ell$ belongs to $\mathscr{L}(U_2)$ thereby completing the proof.
\end{proof}

\begin{corollary}\label{Iwasawa_rank_corollary2}
For any $\ell \in \mathscr{L}(U)$ we have $\dlim H^1(K_{n, \ell}, E)[p] = \dlim \res_{\ell} R_{2n} d_{2n}(\ell) + \dlim \res_{\ell} R_{2n+1} d_{2n+1}(\ell)$
\end{corollary}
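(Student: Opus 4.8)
The plan is to mirror the proof of corollary~\ref{Iwasawa_rank_corollary}, transporting the corank computation of the previous proposition from the Heegner points to the Kolyvagin classes $d_n(\ell)$ by means of the isomorphism furnished by property~(3) of the Kolyvagin classes in section~2.2. Recall that this property provides, for each $n$, a $G_n$-equivariant isomorphism $\psi_{n,\ell}\colon H^1(K_{n,\ell},E)[p]\isomarrow E(K_{n,\ell})/p$ satisfying $\psi_{n,\ell}(\res_\ell d_n(\ell))=\res_\ell(c_n(1))=\res_\ell\alpha_n$ and compatible with the restriction maps in the sense that $\psi_{n+1,\ell}\circ\res_n=\res_n\circ\psi_{n,\ell}$.

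First I would pass to the direct limit. The compatibility with restriction makes the $\psi_{n,\ell}$ into a morphism of direct systems, so they assemble into an $\overbar{\Lambda}$-module isomorphism $\psi_\ell\colon \dlim H^1(K_{n,\ell},E)[p]\isomarrow \dlim E(K_{n,\ell})/p$. By the $G_n$-equivariance of $\psi_{n,\ell}$ (and $R_n=\Fp[G_n]$) we have $\psi_{n,\ell}(\res_\ell R_n d_n(\ell))=\res_\ell R_n\alpha_n$, and taking the limits over the even and odd indices separately gives that $\psi_\ell$ carries $\dlim\res_\ell R_{2n}d_{2n}(\ell)$ onto $\dlim\res_\ell R_{2n}\alpha_{2n}$ and $\dlim\res_\ell R_{2n+1}d_{2n+1}(\ell)$ onto $\dlim\res_\ell R_{2n+1}\alpha_{2n+1}$. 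Consequently $\psi_\ell$ identifies $\dlim\res_\ell R_{2n}d_{2n}(\ell)+\dlim\res_\ell R_{2n+1}d_{2n+1}(\ell)$ with $\dlim\res_\ell R_{2n}\alpha_{2n}+\dlim\res_\ell R_{2n+1}\alpha_{2n+1}$. The previous proposition asserts that the latter has $\overbar{\Lambda}$-corank equal to $2$ inside $\dlim E(K_{n,\ell})/p$, so the former has $\overbar{\Lambda}$-corank equal to $2$ inside $\dlim H^1(K_{n,\ell},E)[p]$.

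It then remains to upgrade this corank equality to an equality of modules. By proposition~\ref{Iwasawa_rank_proposition} the ambient module $\dlim H^1(K_{n,\ell},E)[p]$ is cofree of $\overbar{\Lambda}$-corank $2$, so its Pontryagin dual is free of rank $2$. Dualizing the inclusion of the corank-$2$ submodule $W=\dlim\res_\ell R_{2n}d_{2n}(\ell)+\dlim\res_\ell R_{2n+1}d_{2n+1}(\ell)$ gives a surjection $\overbar{\Lambda}^2\twoheadrightarrow W^{\dual}$ with $\rank_{\overbar{\Lambda}}W^{\dual}=2$; since $\overbar{\Lambda}$ is a domain, the kernel has rank $0$ and hence is trivial, being a submodule of the torsion-free module $\overbar{\Lambda}^2$. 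Thus the surjection is an isomorphism and $W$ equals the whole of $\dlim H^1(K_{n,\ell},E)[p]$, which is the assertion. The only point requiring care is this last domain-theoretic step, which plays the role of the cofreeness argument quoted in corollary~\ref{Iwasawa_rank_corollary}; everything else is a direct transcription through $\psi_\ell$, and no genuine obstacle arises.
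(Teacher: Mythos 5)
Your proposal is correct and follows essentially the same route as the paper: property (3) of the Kolyvagin classes transports the corank computation of the preceding proposition from the Heegner points to the classes $d_n(\ell)$, and the cofreeness of $\dlim H^1(K_{n,\ell},E)[p]$ (proposition \ref{Iwasawa_rank_proposition}) forces a corank-two submodule to be everything. The paper states this last step in one line; your dualization argument is just the detailed justification of it.
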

\begin{proof}
Using property (3) of the Kolyvagin classes in section 2.2, it follows from the previous proposition that $\dlim \res_{\ell} R_{2n} d_{2n}(\ell)+ \dlim \res_{\ell} R_{2n+1} d_{2n+1}(\ell)$ is a submodule of $\dlim H^1(K_{n, \ell}, E)[p]$ of $\overbar{\Lambda}$-corank equal to 2. This submodule must be equal to $\dlim H^1(K_{n, \ell}, E)[p]$ since by proposition \ref{Iwasawa_rank_proposition} $H^1(K_{n, \ell}, E)[p]$ is a cofree $\overbar{\Lambda}$-module of rank 2.
\end{proof}

We can now finally prove theorem B

\begin{theoremB}
Assume that $p$ splits in $K/\Q$, $E$ has good supersingular reduction at $p$ and conjecture \ref{main_conjecture} is true, then $\Selinf(E/K_{\infty})$ has $\Lambda$-corank equal to 2 and $X_{p^{\infty}}(E/K_{\infty})=\{0\}$
\end{theoremB}

\begin{proof}

For any $\ell \in \mathscr{L}(U)$ using proposition \ref{global_duality_proposition} together with property (2) of the Kolyvagin classes in section 2.2 it follows that $\uppsi_{\ell}(\dlim \res_{\ell} R_{2n} d_{2n}(\ell) + \dlim \res_{\ell} R_{2n+1} D_{2n+1}(\ell))=0$ and so from corollary \ref{Iwasawa_rank_corollary2} we get that $\img \uppsi_{\ell}=\{0\}$

Since according to proposition \ref{generating_Xp_proposition} $\img \uppsi_{\ell}$ with $\ell$ ranging over $\mathscr{L}(U)$ generate $X_p(E/K_{\infty})^{\; \dual}$ therefore we get that $X_p(E/K_{\infty})=\{0\}$ which implies that $X_{p^{\infty}}(E/K_{\infty})=\{0\}$ by Nakayama's lemma.

It then follows from theorem \ref{Iwasawa_rank_main_theorem} that $\corank_{\Lambda}(\Selinf(E/K_{\infty})) \le 2$

So the proof of theorem B will be complete if we can show that $\corank_{\Lambda}(\Selinf(E/K_{\infty})) \geq 2$. This follows from theorem 1.7 of \cite{Gb_LNM} (or alternatively one can use proposition 2.1 of \cite{Ciperiani}).
\end{proof}

\textbf{Acknowledgments} The author is grateful to Christophe Cornut for discussing his work with him and for suggesting using his paper with Vatsal \cite{CV_DUR} to prove theorem \ref{Heegner_modules_rank_theorem}. The research for this paper began while the author was visiting Lille1 University. The author would like to thank the Mathematics Department for their hospitality during his stay.

\end{document}